\documentclass{amsart}
\usepackage[utf8]{inputenc}
\usepackage[T1]{fontenc}
\usepackage{amsmath, amssymb, amsthm}
\usepackage{bm}
\usepackage{mathrsfs}
\usepackage{epsfig,graphics}
\usepackage[margin=1in]{geometry}
\usepackage{setspace}
\usepackage{booktabs}
\usepackage{tikz}
\usetikzlibrary{shapes, arrows, positioning}
\usepackage{ragged2e}
\usepackage{blindtext}
\usepackage{pdfpages}
\usepackage{epstopdf}
\usepackage{amsmath}
\usepackage{amssymb}
\usepackage{lipsum}
\usepackage{graphicx,adjustbox}
\usepackage{hyperref}
\usepackage{amsthm}
\usepackage{mathalfa,mathrsfs} 
\usepackage{float}
\usepackage{caption}
\usepackage{subcaption}
\usepackage{endnotes}
\usepackage{bibentry}
\usepackage{cite}
\usepackage{tikz-cd} 

\newtheorem{theorem}{Theorem}[section]
\theoremstyle{plain}

\newtheorem{corollary}[theorem]{Corollary}
\newtheorem{definition}[theorem]{Definition}

\newtheorem{lemma}[theorem]{Lemma}

\newtheorem{remark}[theorem]{Remark}

\numberwithin{equation}{section}
\usepackage{fancyhdr}

\usepackage{hyperref}
\hypersetup{
    colorlinks   = true,
    urlcolor     = blue,
    citecolor    = blue,
    bookmarksopen=true
}

\title[Liouville type theorems for fully nonlinear elliptic equations ... ]{Liouville type theorems for fully nonlinear elliptic equations with superlinear growth in gradient}
\author{Amit Kumar Acharya}
\address[Amit Kumar Acharya]{SRM University Amaravati, Andhra Pradesh-522502, India}
\email{amitacharya.h@gmail.com, amitkumar\_acharya@srmap.edu.in}
\author{Ram Baran Verma}
\address[Ram Baran Verma]{SRM University Amaravati, Andhra Pradesh-522502, India}
\email{rambv88@gmail.com, rambaran.v@srmap.edu.in}
\subjclass[2020]{Primary 35B50, 35B51, 35B53, 35J47.}
\keywords{Liouville-type theorems, Pucci operator, positive supersolutions, exterior domains}
\begin{document}
\begin{abstract} 
This article investigates positive supersolutions of the fully nonlinear elliptic system
\[
\begin{cases}
-\mathcal{M}_{\lambda,\Lambda}^{+}(D^{2}u)+|\nabla u|^{q}
\geq\lambda_{1}f_{1}(v)~~\text{in}~~\mathbb{R}^{n}\setminus B_{R_0},\\
-\mathcal{M}_{\lambda,\Lambda}^{+}(D^{2}v)+|\nabla v|^{q}
\geq\lambda_{2}f_{2}(u)~~\text{in}~~\mathbb{R}^{n}\setminus B_{R_0},
\end{cases}
\]
where $q>1,\lambda_{1},\lambda_{2}>0,$ and the nonlinearities exhibit power-type behaviour either near the origin or at infinity. Introducing the effective dimension $\widetilde n_{+}=\frac{\lambda}{\Lambda}(n-1)+1$
associated to extremal Pucci operator, we identify the critical exponent $q_{c}=\frac{\widetilde n_{+}}{\widetilde n_{+}-1},$ which governs the qualitative behaviour of positive supersolutions. Using this framework, we establish sharp Liouville-type nonexistence theorems in exterior domains and determine optimal nonexistence regions through the interaction between the gradient exponent $q,$ the effective dimension $\widetilde n_{+}$ and the nonlinear couplings. In the prototype case $f_{1}(t)=t^{p_{1}},$ $f_{2}(t)=t^{p_{2}}$ the obtained conditions are shown to be optimal. The analysis is carried out under the natural regularity assumption
$u,v\in W^{2,p}_{\mathrm{loc}}(\mathbb{R}^{n}\setminus B_{R_0}),$
for $p>n,$ which is the regularity available for fully nonlinear uniformly elliptic equations, rather than within a classical $C^{2}$ framework. To overcome the absence of pointwise second-order differentiability, we develop an approach based on a nonlinear transformation adapted to the gradient structure, level-crossing principles for absolutely continuous radial profiles, transversality properties of suitable auxiliary quantities, and gradient-domination mechanisms. These tools yield eventual monotonicity and quantitative asymptotic control of radial supersolutions, allowing the Liouville theory to be established at the Sobolev regularity level while preserving the sharpness of the critical thresholds. The results reveal the role of the effective dimension in determining critical nonexistence phenomena for fully nonlinear systems with superlinear gradient terms and provide a framework for the study of Liouville-type problems beyond the classical $C^{2}$ setting.\\
Our results provide the fully nonlinear Pucci analogue of the Liouville theory for semilinear elliptic systems involving nonlinear gradient terms and reveal the fundamental role of the effective dimension in determining the critical nonexistence thresholds.
\end{abstract}
\maketitle
\section{Introduction}
This article is concerned with the nonexistence of positive supersolutions to the following system: 
\begin{equation}\label{main}
\begin{cases}
-\mathcal M_{\lambda,\Lambda}^{+}(D^2u)+|\nabla u|^q
\geq\lambda_{1}f_1(v)~~\text{in}~~\mathbb R^n\setminus B_{R_0},\\
-\mathcal M_{\lambda,\Lambda}^{+}(D^2v)+|\nabla v|^q
\geq\lambda_{2}f_2(u)~~\text{in}~~\mathbb R^n\setminus B_{R_0},
\end{cases}
\end{equation}
where $n\ge2,$ $R_0>0,$ $q>1,$ $\lambda_1,\lambda_2>0$ and $f_1,f_2:(0,\infty)\to(0,\infty)$ are nondecreasing functions. Our main result establishes Liouville-type nonexistence theorems for positive supersolutions of \eqref{main} in exterior domains. To place our results in the broader context of the existing literature, we begin with a brief overview of previous work related to this problem.\\
The Liouville type results in itself is an interesting as it deals with the classification of global solutions to semilinear elliptic equations in the whole space. Besides their intrinsic interest, Liouville-type theorems play a fundamental role in the qualitative theory of elliptic equations. They constitute one of the principal tools for deriving \emph{a priori} estimates for positive solutions of semilinear elliptic equations through blow-up arguments; see, for instance,
\cite{gidas1981global,gidas1981priori,chen1991classification,defigueiredo1994liouville,de2014liouville}.
Subsequently, this theory was extended to quasilinear elliptic equations, including the $p$-Laplacian and more general quasilinear operators; see \cite{mitidieri1996nonexistence,serrin1996local,mitidieri1998liouville,serrin2002cauchy,mitidieri2002fujita}. It also finds interesting application in the study of regularity theory, which was introduced by L. Simon\cite{simon1997schauder}. While semilinear and many quasilinear equations possess a divergence or variational structure that allows the use of integral identities and energy methods, fully nonlinear elliptic equations are naturally posed in nondivergence form and generally lack such tools. For equations involving the Pucci extremal operators, Cutr\`{i} and Leoni \cite{Alessandra} established Liouville-type theorems by combining a nonlinear version of Hadamard's three-circle theorem with comparison arguments. Later, Armstrong and Sirakov \cite{Armstrong01112011,armstrong2011sharp} introduced a powerful maximum-principle approach based on scaling exponents and fundamental solutions of uniformly elliptic operators. For nice introduction of Hadamard's three-circle theorem we refer,\cite{protter2012maximum}. While for major survey on this topic we refer to  \cite{tyagi2016survey,felmer2005some} for further developments in this direction.\\
A natural extension of scalar equations is provided by elliptic systems. The classical Liouville theorem of Gidas and Spruck \cite{gidas1981priori,gidas1981global} has its counterpart in the study of the Lane--Emden system
\[
\begin{cases}
-\Delta u=v^{p_{1}}~~\text{in}~~\mathbb R^{n},\\
-\Delta v=u^{p_{2}}~~\text{in}~~\mathbb R^{n},
\end{cases}
\]
which has attracted considerable attention over the past three decades. Fundamental contributions were obtained by de Figueiredo and P. Felmer \cite{defigueiredo1994liouville}, Mitidieri \cite{mitidieri1996nonexistence}, Serrin and Zou\cite{serrin1996lane,serrin2002cauchy},
and Busca and Man\'{a}sevich \cite{busca2002lane}. These works laid the foundations of the modern Liouville theory for elliptic systems and are closely related to the celebrated Lane--Emden conjecture; see also \cite{cheng2019lane,de2014liouville}. The corresponding theory for fully nonlinear elliptic systems has been developed much more recently. Existence and Liouville-type results for cooperative fully nonlinear systems were obtained by Quaas and Sirakov \cite{quaas2009existence}, while further developments include symmetry and Liouville-type theorems established in
\cite{Maia_2023}. At this juncture we would like to mention that while proving a prior estimate in a non-convex smooth domain we also need Liouville type theorem in the half space. The usual procedure to get it by proving the monotonicity of solution with the help of moving plan method, see\cite{quaas2006existence,felmer2004positive}.\\
Liouville-type theorems in exterior domains constitute an important branch of modern elliptic theory and play a fundamental role in the study of isolated singularities, asymptotic behaviour at infinity. In the semilinear setting, nonexistence results for equations of the form
\[-\Delta u=\lambda f(u)~~\text{in}~~\mathbb{R}^{n}\setminus B_{R_0},\]
originated in the pioneering works of Gidas and Spruck \cite{gidas1981global,gidas1981priori}. Since then, considerable effort has been devoted to identifying optimal conditions on the nonlinearity ensuring the nonexistence of positive supersolutions in exterior domains; see, for example,
\cite{BandleEssen,Liskevich2004,alarcon2016optimal}.\\
A natural and important extension of this theory consists in incorporating nonlinear gradient terms and considering equations of the form
\[-\Delta u+|\nabla u|^{q}=\lambda f(u)~~\text{in}~~\mathbb R^n\setminus B_{R_0}.\]
The presence of the gradient term substantially changes the qualitative behaviour of solutions by destroying the natural scaling invariance of the equation and creating a delicate interplay between diffusion reaction and gradient effects. Sharp Liouville-type theorems for positive supersolutions of such equations in exterior domains were established by Alarc\'{o}n, Burgos-P\'{e}rez, Garc\'{i}a-Meli\'{a}n and Quaas
\cite{Alarcon,AlarconGarciaMelianQuaas2013a,AlarconGarciaMelianQuaas2013b}, where the critical threshold is determined by the interaction between $q, n$ and the asymptotic behaviour of the nonlinearity.\\
The corresponding theory for elliptic systems has also received considerable attention. In particular, the Lane--Emden system
\[
\begin{cases}
-\Delta u=v^{p_{1}}~~\text{in}~~\mathbb R^n\setminus B_{R_0},\\
-\Delta v=u^{p_{2}}~~\text{in}~~\mathbb R^n\setminus B_{R_0},
\end{cases}
\]
has been extensively studied because of its close connection with the Lane--Emden conjecture and reaction--diffusion models. Fundamental contributions were obtained by de Figueiredo and Felmer \cite{defigueiredo1994liouville}, Mitidieri \cite{mitidieri1996nonexistence}, Serrin and Zou \cite{SerrinZou1996}, and Busca and Man\'asevich \cite{busca2002liouville}; see also the survey \cite{de2014liouville} for a comprehensive account of Liouville-type theorems, monotonicity methods, and \emph{a priori} estimates for elliptic systems.\\
More recently, Burgos-P\'{e}rez and Garc\'{i}a-Meli\'{a}n \cite{burgos2018lioville} extended this theory to elliptic systems with nonlinear gradient terms,
\[
\begin{cases}
-\Delta u+|\nabla u|^{q}
=\lambda_{1}f_{1}(v)~~\text{in}~~\mathbb R^n\setminus B_{R_0},\\
-\Delta v+|\nabla v|^{q}
=\lambda_{2}f_{2}(u)~~\text{in}~~\mathbb R^n\setminus B_{R_0},
\end{cases}
\]
and established optimal Liouville-type nonexistence theorems for positive supersolutions in exterior domains. Their approach combines radial reduction, the method of sub- and supersolutions, comparison principles, monotonicity arguments, and delicate asymptotic estimates.
Motivated by these developments, in the present paper we investigate the corresponding problem for fully nonlinear elliptic systems driven by the Pucci extremal operators. To the best of our knowledge, Liouville-type nonexistence theorems for positive supersolutions of fully nonlinear elliptic systems involving nonlinear gradient terms in exterior domains have not been previously established. The main objective of this paper is to fill this gap.\\
One of the distinctive features of the fully nonlinear theory is the appearance of the \emph{effective dimension}
\[\widetilde n_+=\frac{\lambda}{\Lambda}(n-1)+1,\]
which replaces the Euclidean dimension in many qualitative properties of solutions involving the Pucci extremal operators. Consequently, the critical exponent associated with the nonlinear gradient term is no longer $\frac{n}{n-1},$ but instead
\[q_c=\frac{\widetilde n_+}{\widetilde n_+-1}.\]
As will be shown in this paper, the effective dimension $\widetilde n_+$ and the critical exponent $q_c$ govern the asymptotic behaviour of positive supersolutions of \eqref{main} and determine the threshold between existence and nonexistence.\\
The extension of the Laplacian theory to the present fully nonlinear setting presents several substantial difficulties. Unlike the Laplacian, the Pucci operator is a fully nonlinear operator in nondivergence form and therefore lacks both the variational structure and the integral identities that play a fundamental role in the semilinear theory. Moreover, the nonlinear gradient term destroys the homogeneity of the equation and gives rise to new asymptotic regimes that do not appear in homogeneous fully nonlinear equations. Consequently, many of the techniques available for the Laplacian cannot be adapted directly to the present framework.\\
One of the contribution of this article is to overcome the problem posed due to lack of regularity of solution in our setting. In contrast to \cite{burgos2018lioville}, where the analysis is carried out in the classical $C^2$ framework, our results are established under the natural assumption $u,v\in W^{2,p}_{\mathrm{loc}}(\mathbb R^n\setminus B_{R_0})$ for $p>n,$ which is precisely the regularity furnished by the $W^{2,p}$-theory for uniformly elliptic fully nonlinear equations, see
\cite{nornberg2019c1,swiech2020pointwise}. Working at this level of regularity requires new arguments that avoid pointwise second-order differentiability. Our approach combines nonlinear transformations with refined monotonicity arguments and asymptotic analysis for radial supersolutions, leading to sharp decay estimates and eventual monotonicity properties.

The principal contribution of this paper is the establishment of sharp Liouville-type nonexistence theorems for positive supersolutions of \eqref{main}. In particular, we identify the effective dimension $\widetilde n_+$ and the critical exponent $q_c$ as the fundamental quantities governing the asymptotic behaviour of solutions and derive optimal nonexistence criteria in terms of $q$, $\widetilde n_+$ and the asymptotic behaviour of the nonlinearities. In the prototype case
\[
f_1(t)=t^{p_1},~~f_2(t)=t^{p_2},
\]
our results recover the sharp threshold separating existence and nonexistence and extend the Liouville theory of Burgos-P\'erez and Garc\'ia-Meli\'an from the Laplacian to fully nonlinear elliptic systems driven by the Pucci extremal operators.

The paper is organized as follows. Section~2 recalls the necessary preliminaries on the Pucci extremal operators and radial supersolutions. Section~3 establishes the monotonicity and asymptotic properties of radial supersolutions. Sections~4--6 are devoted to the proof of the Liouville-type theorems in the three regimes $q<q_c$, $q=q_c$, and $q>q_c$, respectively. The final section deals with supersolutions exhibiting blow-up behaviour at infinity.
\subsection{Prelimanaries}
In this section, we present the essential definitions and results that will be used throughout the article. For specified constants where $0 < \lambda<\Lambda$, Pucci's extremal operators are defined as follows:
\begin{equation}\label{pucci}
\mathcal{M}^{\pm}_{\lambda,\Lambda}(X)=\Lambda\sum_{\pm e_{i}>0}e_{i}+\lambda\sum_{\pm e_{i}<0}e_{i},
\end{equation}
where $X$ is a $n\times n$ symmetric matrix, and $e_i$ are its eigenvalues. Thus in order to compute $\mathcal{M}^{\pm}_{\lambda,\Lambda}(D^{2}u),$ we need to compute the eigenvalues of Hessian $D^{2}u.$ In general, it is not possible to compute the eigenvalues of the Hessian. But in some particular cases we can easily compute the eigenvalues of the Hessian. One such instance is when $u$ is a radial function then we have.
\begin{lemma}\label{radial}
Let $\tilde{u}:~[0,\infty)\longrightarrow \mathbb{R}$ be $C^{2}$ function such that $u(x)=\tilde{u}$. Then for any $x\in\mathbb{R}^{n}\setminus\{0\},$ the eigenvalues of the Hessian $D^{2}u(x)$ are $\frac{\tilde{u}^{\prime}(|x|)}{|x|}$ with multiplicity $n-1$ and $\tilde{u}^{\prime\prime}(|x|)$ with multiplicity $1.$
\end{lemma}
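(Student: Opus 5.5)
The plan is to compute $D^{2}u(x)$ explicitly by the chain rule and then read off its eigenvalues from the resulting rank-one-perturbation structure. Write $r=|x|$ and $u(x)=\tilde u(r)$ for $x\neq0$. Since $r$ is smooth away from the origin with $\partial_i r = x_i/r$, we get
\[
\partial_i u(x)=\tilde u'(r)\frac{x_i}{r}.
\]
Differentiating once more, and using $\partial_j(x_i/r)=\frac{\delta_{ij}}{r}-\frac{x_ix_j}{r^{3}}$, gives
\[
\partial_{ij}u(x)=\tilde u''(r)\frac{x_ix_j}{r^{2}}+\frac{\tilde u'(r)}{r}\Big(\delta_{ij}-\frac{x_ix_j}{r^{2}}\Big).
\]
In matrix form, with $\omega=x/|x|$ the unit radial vector, this reads
\[
D^{2}u(x)=\tilde u''(r)\,\omega\otimes\omega+\frac{\tilde u'(r)}{r}\big(I-\omega\otimes\omega\big).
\]

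The eigenvalues now follow from the fact that $P=\omega\otimes\omega$ is the orthogonal projection onto $\mathrm{span}\{\omega\}$ and $I-P$ is the complementary projection onto $\omega^{\perp}$. Applying the matrix to $\omega$ gives $D^{2}u(x)\,\omega=\tilde u''(r)\,\omega$, so $\tilde u''(|x|)$ is an eigenvalue with eigenvector $\omega$. For any $\xi\perp\omega$ we have $P\xi=0$, so $D^{2}u(x)\,\xi=\frac{\tilde u'(r)}{r}\xi$. Choosing an orthonormal basis of $\omega^{\perp}$, which is $(n-1)$-dimensional, and adjoining $\omega$ yields an orthonormal eigenbasis of $\mathbb R^{n}$. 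This gives the eigenvalue $\tilde u'(|x|)/|x|$ with multiplicity $n-1$ and $\tilde u''(|x|)$ with multiplicity $1$. If the two values happen to coincide, the multiplicities simply add, and the statement is still read as a list of eigenvalues counted with multiplicity.

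There is no real obstacle here. The only points needing care are to restrict to $x\neq0$, where $|x|$ is smooth, and to note that $C^{2}$ regularity of $\tilde u$ on $(0,\infty)$ is exactly what justifies the chain rule. The same formula also holds almost everywhere when $\tilde u\in W^{2,p}_{\mathrm{loc}}$, which is the form used later in the paper.
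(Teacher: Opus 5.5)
Your proof is correct and complete: the chain-rule formula $D^{2}u(x)=\tilde u''(r)\,\omega\otimes\omega+\frac{\tilde u'(r)}{r}\,(I-\omega\otimes\omega)$, combined with the orthogonal splitting $\mathbb{R}^{n}=\mathrm{span}\{\omega\}\oplus\omega^{\perp}$, gives exactly the stated eigenvalues and multiplicities. The paper states this lemma without proof as a standard fact, but it uses precisely this decomposition of the Hessian in the proof of Lemma~\ref{lem2}, so your argument matches the computation the paper implicitly relies on.
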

Let us introduce the function 
\begin{equation}\label{theta12}
\theta(s)= 
\begin{cases}
\Lambda,& \text{if}~~s\geq 0\\
\lambda,& \text{if}~~s<0.
\end{cases}
\end{equation}
Then, since $u$ is radially symmetric, the eigenvalues of $D^2 \tilde{u}$ are $\tilde{u}^{\prime \prime}(r)$ and $\tilde{u}^{\prime}(r)/r$, so by definition of $\mathcal{M}^{+}_{\lambda,\Lambda}$ we have
\[
\mathcal{M}^{+}_{\lambda,\Lambda}\left(D^2 \tilde{u}\right)(r)=\theta\left(\tilde{u}^{\prime \prime}(r)\right) \tilde{u}^{\prime \prime}(r)+\theta\left(\tilde{u}^{\prime}(r)\right)(n-1) \frac{\tilde{u}^{\prime}(r)}{r}.
\]
\begin{remark} \hspace{-1cm}
\begin{enumerate}
\item{} By a \textbf{positive} solution $(u,v)$ of \eqref{main}, we mean $u,v>0$ and satisfies the equation.
\item{} We denote by $\widetilde{n}_{+}:=\frac{\lambda}{\Lambda}(n-1)+1,$  which is the effective dimension associated with the Pucci's extremal operator $\mathcal M^{+}_{\lambda,\Lambda}$ defined above.
\item{} We denote $q_c:=\frac{\widetilde{n}_{+}}{\widetilde{n}_{+}-1}.$
\item{} There is no harm in assuming that $0<\lambda<1.$ We will be assuming it throughout the article.
\end{enumerate}
\end{remark} 
\begin{definition}
A pair $(u,v)\in C(\mathbb{R}^{n}\setminus B_{R_0})^{2}$ is called a \emph{viscosity supersolution} of \eqref{main} if for every $x_0\in\mathbb{R}^{n}\setminus B_{R_0}$ the following hold:
\begin{enumerate}
\item Whenever $\varphi\in C^2(\mathbb{R}^{n}\setminus B_{R_0}))$ is such that $u-\varphi$ attains a local minimum (resp. maximum) at $x_0,$ one has
\[-\mathcal M_{\lambda,\Lambda}^{+}(D^2\varphi(x_0))+|\nabla\varphi(x_0)|^q\geq(\text{resp.}\leq) \lambda_1 f_1(v(x_0)).\]
\item Whenever $\psi\in \in C^2(\mathbb{R}^{n}\setminus B_{R_0}))$ is such that $v-\psi$
attains a local minimum (resp. maximum) at $x_0,$ one has
\[-\mathcal M_{\lambda,\Lambda}^{+}(D^2\psi(x_0))+|\nabla\psi(x_0)|^q
\geq(\text{resp.}\leq)\lambda_2 f_2(u(x_0)).\]
\end{enumerate}
The function $(u,v)\in C(\mathbb{R}^{n}\setminus B_{R_0})^{2}$ is called viscosity solution if it is sub and supersolution.
\end{definition}
\section{Main result}
As we consider the system \eqref{main}. We impose the following conditions on $f_{1},f_{2}$ near zero.
\begin{itemize}
\item[{\textbf{(Z):}}]
\begin{equation}\label{mueq}
\mu_{1}:=\liminf_{t\to 0}\frac{f_{1}(t)}{t^{p_{1}}}>0,\quad \mu_{2}:=\liminf_{t\to 0}\frac{f_{2}(t)}{t^{p_{2}}}>0,
\end{equation}
for some $p_{1},p_{2}>0.$ At this point we would like to emphasize that $\lambda_1,\lambda_2$ in this system of equation has no relation with $\lambda$ appearing in the definition of Pucci's exteremal operator.
\item[\textbf{(F):}] We say that $f_{1},f_{2}$ satisfy the condition $(F)$ if, $f_{1}$ and $f_{2}$ be functions defined on $(0,+\infty)$ which are continuous, nondecreasing, and strictly positive. \\
\end{itemize}
Following the ideas of \cite{burgos2018lioville}, we state our results in three cases:
\begin{enumerate}
\item{}$1<q<q_{c}$
\item{}$q=q_{c}$
\item{}$q>q_{c}.$
\end{enumerate}
For the $q$ lying in the first two cases, that is, $1<q\leq q_{c}$ and also $p_{1}p_{2}>q_{c}^{2}$ we use the notation 
\begin{equation}\label{alpha}
\alpha_{1}=\frac{q(p_{1}+q)}{p_{1}p_{2}-q^{2}}~~\text{and}~~\alpha_{2}=\frac{q(p_{2}+q)}{p_{1}p_{2}-q^{2}}
\end{equation}
Because of the symmetric nature of the Problem \eqref{main}, we can assume that $p_{1}\geq p_{2}.$ We observe that the relative positions of $\alpha_{1}$ and $\alpha_{2}$ with respect this new quantity $\beta=\frac{2-q}{q-1}$ determine the region of non-existence of positive supersolutions. Depending on the position of $q,$ we divide the main theorem into four parts. Now we are ready to state the first part of the main theorem. 
\begin{theorem}[First case]\label{thm1}
Assume that $\widetilde{n}_{+}\geq2$ and  $1<q<q_{c}.$ Let ($F$) be in force and \eqref{mueq} holds for some exponents satisfying $p_{1} \geq p_{2} > 0$. Then the following statements are valid.
\begin{enumerate}
\item{} If $p_{1}p_{2}\le q^{2}$, there exist no positive supersolutions of \eqref{main} which do not exhibit blow-up behavior at infinity.
\item{} If $p_{1}p_{2}>q^{2}$ and $\alpha_{1}> \beta$, then equation \eqref{main} admits no positive supersolutions which do not exhibit blow-up behavior at infinity. 
\item{} Assume that $p_{1}p_{2}>q^{2}$ and $\alpha_{1}=\alpha_{2}=\beta$. 
If every pair of positive numbers $(t_{1},t_{2})$ satisfies at least one of the inequalities
\begin{equation}\label{1.5}
\begin{cases}
\alpha_{1}\left(\Lambda(n-1)-\lambda(\alpha_{1}+1)\right)t_1+\alpha_{1}^q t_1^q<\lambda_1\mu_{1}t_2^{p_1},\\
\alpha_{2}\left(\Lambda(n-1)-\lambda(\alpha_{2}+1)\right)t_2+\alpha_{2}^q t_2^q<\lambda_2\mu_{2}t_1^{p_2},
\end{cases}
\end{equation}
then \eqref{main} has no positive supersolutions which do not exhibit blow-up behavior at infinity.
\item Suppose that $p_{1}p_{2} >q^{2}$, $\alpha_{2}<\alpha_{1}=\beta$, and that for every $(t_{1},t_{2})>0$ at least one of the following inequalities holds:
\begin{equation}\label{1.6}
\begin{cases}
\alpha_{1}\left(\Lambda(n-1)-\lambda(\alpha_{1}+1)\right)t_1+\alpha_{1}^q t_1^q<\lambda_1\mu_{1}t_2^{p_1},\\
\alpha_{2}^q t_2^q<\lambda_2\mu_{2}t_1^{p_2}.
\end{cases}
\end{equation}
Then there are no positive supersolutions of \eqref{main} which do not exhibit blow-up behavior at infinity.
\end{enumerate}
\end{theorem}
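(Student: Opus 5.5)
We argue by contradiction, following the scheme of Burgos-P\'erez and Garc\'ia-Meli\'an adapted to $\mathcal M^+_{\lambda,\Lambda}$. Suppose $(u,v)$ is a positive supersolution of \eqref{main} in $\mathbb R^n\setminus B_{R_0}$ that does not blow up at infinity; since $f_1,f_2$ are nondecreasing, this should let us assume $\liminf_{|x|\to\infty}u$ and $\liminf v$ are finite.

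\textbf{Step 1: radial reduction.} We replace $(u,v)$ by radial supersolutions. Set $\underline u(r)=\min_{|x|=r}u$ and $\underline v(r)=\min_{|x|=r}v$. Both operators $-\mathcal M^+(D^2\cdot)+|\nabla\cdot|^q$ are rotation invariant, and the infimum of supersolutions is a supersolution. So in the viscosity sense $(\underline u,\underline v)$ is again a supersolution, and the monotone right-hand sides are preserved. Via the $W^{2,p}$ theory for radial profiles, $\underline u,\underline v$ are absolutely continuous with a.e.\ defined derivatives. The radial formula for $\mathcal M^+$ (after Lemma~\ref{radial}) then gives the a.e.\ inequality
\[
-\theta(u'')u''-\theta(u')\tfrac{n-1}{r}u'+|u'|^q\ge\lambda_1 f_1(v),
\]
and the analogous inequality for $v$.

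\textbf{Step 2: monotonicity and decay.} Using the level-crossing and transversality arguments of Section~3, we show that the radial supersolutions are eventually monotone and tend to limits. Comparison with the fundamental solutions $r^{2-\widetilde n_+}$, and with the explicit gradient-term profile $r^{-\beta}$ (which is decreasing precisely because $q<q_c$), should force $u,v\to0$ with lower bounds $u\ge c\,r^{-\beta}$ and $v\ge c\,r^{-\beta}$. Here $\widetilde n_+\ge2$ guarantees $\beta\le\widetilde n_+-2$. The solution does not blow up, and bounded positive supersolutions with limit $L>0$ are impossible because $f_i(L)>0$ forces $-\mathcal M^+ \ge c$ against the $|u'|^q$ term. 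Once $u,v\to0$, hypothesis (Z) lets us replace $f_i$ by $\mu_i t^{p_i}(1-\varepsilon)$.

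\textbf{Step 3: bootstrap of lower bounds (items (1),(2)).} Suppose $u\ge c r^{-a}$ and $v\ge cr^{-b}$. We integrate the radial inequality, using the change of variables $w=1-e^{-u/(q-1)}$ type (our nonlinear transformation) to absorb the gradient term, and show the gradient term is dominated. This yields improved bounds of the form $u\ge c r^{-(p_1b-2)}$ or $u\ge c r^{-(p_1 b)/q+\dots}$; the balance $a=(p_1 b+q)/q$... whose fixed point is $\alpha_1,\alpha_2$. When $p_1p_2\le q^2$ the iteration has no fixed point, so the exponents decrease indefinitely until they are below the allowed decay $\beta$, which is a contradiction. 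When $\alpha_1>\beta$, the iteration starting from $\beta$ moves away from the fixed point in the direction incompatible with the upper decay $u\lesssim r^{-\beta}$. Here $r^{-\beta}$ is the maximal decay, obtained by comparing with the explicit supersolution $C r^{-\beta}$ of $-\mathcal M^+ w+|w'|^q=0$. These two facts collide.

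\textbf{Step 4: critical cases (3),(4).} When $\alpha_1=\beta$, we set $u=r^{-\alpha_1}U(\log r)$ and $v=r^{-\alpha_2}V(\log r)$ and obtain an autonomous system. Bounds from Step~3 show $U$ and $V$ stay in a compact subset of $(0,\infty)$. Taking $\liminf$s $t_1,t_2$ at a sequence of minima, the radial inequalities give the reverse of both inequalities in \eqref{1.5}. The coefficient $\alpha_i(\Lambda(n-1)-\lambda(\alpha_i+1))$ arises from $\mathcal M^+(D^2 r^{-\alpha})$ with $u'<0$ and $u''>0$. This contradicts the hypothesis. In case (4) the $\alpha_2<\beta$ component makes the diffusion term lower order, leaving only $\alpha_2^qt_2^q$.

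The main obstacle is Step~4. Justifying the evaluation at minima of $U$ without second derivatives requires the $W^{2,p}$ level-crossing arguments, and the $\liminf$ of the right-hand side must be handled carefully with $\mu_i$.
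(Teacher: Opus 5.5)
\textbf{The step that fails: the upper bound $u\lesssim r^{-\beta}$ in Step 3.} You say $r^{-\beta}$ is the ``maximal decay'', obtained by comparing $u$ with the supersolution $Cr^{-\beta}$ of the homogeneous equation. Comparing one supersolution with another gives no inequality at all, and the bound is false. The power-type supersolutions $A_1|x|^{-\alpha_1}$, $A_2|x|^{-\alpha_2}$ with $\alpha_1<\beta$ (the paper's sharpness construction) decay more slowly than $r^{-\beta}$. If the bound held, part (4) would follow without \eqref{1.6}: from $w\gtrsim r^{-\beta}$ and $p_2\beta=q(\alpha_2+1)$ you would get $z\gtrsim r^{-\alpha_2}\gg r^{-\beta}$.

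\textbf{What is missing is a genuine upper estimate,} the paper's Lemma~\ref{lem8}. For the bounded radial solution it first proves gradient domination, $-w'\ge\nu z^{p_1/q}$ and $-z'\ge\nu w^{p_2/q}$. This goes through $V=z^{p_1}/(-w')^q$: using $|w'|\ge Cr^{-1/(q-1)}$, one shows $V'/V\le-\varepsilon/r$ wherever $cV$ is large, so that region is eventually left and never re-entered. Then the change of variables $t=\int_r^\infty\nu z^{p_1/q}$ yields $-\mathcal H'\ge C\mathcal H^{p_1(p_2+q)/(q(p_1+q))}$. This gives nonexistence for $p_1p_2<q^2$, exponential decay for $p_1p_2=q^2$, and $w\le Cr^{-\alpha_1}$, $z\le Cr^{-\alpha_2}$ for $p_1p_2>q^2$. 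Parts (1)--(2) are then just the comparison of these upper bounds with $w\gtrsim r^{-\beta}$. In Step 4 you need exactly these bounds to know $t_1,t_2<\infty$. With only lower bounds, $U$ and $V$ need not stay bounded above, and the limiting inequalities along the liminf sequence cannot be formed.

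\textbf{Parts (1)--(2) can be rescued with your own lower-bound iteration if you change its endpoint.} The correct step is $a=p_1b/q-1$, not $(p_1b+q)/q$. Truncate at $\beta$ and justify each step by the barrier $Ar^{-\gamma}$, $0<\gamma\le\beta$, as in Lemma~\ref{lem7}. Then every two steps the gap $\alpha_1-a$ is multiplied by at least $p_1p_2/q^2>1$; when $p_1p_2\le q^2$, the exponent instead drops by at least $1+p_1/q$. Either way it reaches $0$, and a right-hand side $\gtrsim r^{-q}$ contradicts $u\to0$ via the test-function estimate of Lemma~\ref{lem2}. That is a legitimate alternative to Lemma~\ref{lem8} for (1)--(2), but it is stationary in the critical cases (3)--(4), so it cannot replace the upper bound there.

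\textbf{Two further corrections.}
\begin{enumerate}
\item $q<q_c$ is equivalent to $\beta>\widetilde n_+-2$, not $\beta\le\widetilde n_+-2$. It is precisely this that makes $Ar^{-\beta}$ a subsolution for small $A$, hence a lower barrier. The fundamental solution is a supersolution here and gives no lower bound.
\item $\min_{|x|=r}u$ is only a viscosity supersolution, a minimum of $C^{1,\alpha}$ functions that may have concave corners. It does not inherit $W^{2,p}$ regularity. The paper avoids this by building a genuine radial $W^{2,s}$ solution of the power system (Lemmas~\ref{lem3} and \ref{lem4}), and that is what justifies the a.e.\ evaluations along the sequence of Lemma~\ref{lem11}.
\end{enumerate}
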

\begin{theorem}[Second case]\label{thm2}
Let $\widetilde{n}_{+}\geq3$ and $q=q_{c}$. Let ($F$) be in force and \eqref{mueq} holds for some exponents satisfying $p_{1} \geq p_{2}$. Then the following statements are valid.
\begin{itemize}
\item[(a)] If $p_{1}p_{2}\leq q_c^2$, equation \eqref{main} admits no positive supersolutions which do not exhibit blow-up behavior at infinity.
\item[(b)] If $p_{1}p_{2}>q_c^2$ and $\alpha_{1}>\widetilde{n}_{+}-2$, then \eqref{main} has no positive supersolutions which do not exhibit blow-up behavior at infinity.
\end{itemize}
\end{theorem}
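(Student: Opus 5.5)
We argue by contradiction, following the scheme of \cite{burgos2018lioville} with the Laplacian replaced by $\mathcal M^+_{\lambda,\Lambda}$ and $n$ by $\widetilde n_+$. Suppose $(u,v)$ is a positive supersolution of \eqref{main} in $W^{2,p}_{\rm loc}$, $p>n$, with neither component blowing up at infinity. The first step is \emph{radialization}: comparing $u,v$ on annuli with the radial minima $\underline u(r)=\min_{|x|=r}u$ and $\underline v(r)=\min_{|x|=r}v$ (or with a radial solution of the associated Dirichlet problem on $B_R\setminus B_{R_0}$, letting $R\to\infty$), we reduce to a positive radial supersolution pair. Since $f_i$ are nondecreasing and $\mathcal M^+$ is rotation invariant, this pair satisfies
\[
-\theta(u'')u''-\theta(u')\tfrac{n-1}{r}u'+|u'|^{q}\ge \lambda_1 f_1(v),
\]
and the analogous inequality for $v$, almost everywhere, with $u'$ absolutely continuous.

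By the monotonicity results of Section~3 (level-crossing and transversality for absolutely continuous profiles), $u$ and $v$ are eventually monotone. The non-blow-up hypothesis then forces $u,v$ to be eventually nonincreasing and bounded, with limits $\ell_1,\ell_2\ge0$. If some $\ell_i>0$, then $f$ is bounded below by a positive constant, and integrating the radial inequality against the weight $r^{\widetilde n_+-1}$ (on the region where $u'\le0$ the operator acts with $\Lambda$ on the first-order part) gives, for $\widetilde n_+\ge3$,
\[
-u'(r)\gtrsim r - C\, r^{1-\widetilde n_+}\!\int^r s^{\widetilde n_+-1}|u'|^{q_c}\,ds .
\]
The gradient-domination mechanism (with $q=q_c$, the term $|u'|^{q_c}$ is borderline against $r^{-\widetilde n_+}$-type decay) then contradicts boundedness. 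Hence $\ell_1=\ell_2=0$, and condition (Z) applies near $0$: $f_i(t)\ge \tfrac{\mu_i}{2}t^{p_i}$ for large $r$.

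Next we establish \emph{decay lower bounds}. The fundamental solution of $\mathcal M^+$ is $r^{2-\widetilde n_+}$. A comparison argument on annuli, using the nonlinear transformation $w=e^{-u/(q-1)}$-type (or $w=u^{\gamma}$) to absorb the gradient term, should give $u,v\ge c\,r^{2-\widetilde n_+}$. At the critical exponent $q=q_c$ the gradient of $r^{2-\widetilde n_+}$ satisfies $|u'|^{q_c}\sim r^{-(\widetilde n_+-1)q_c}=r^{-\widetilde n_+}$, which is exactly of the order of $\mathcal M^+$ applied to $r^{2-\widetilde n_+}$. We therefore expect a logarithmic correction, namely a lower bound $u\ge c\,r^{2-\widetilde n_+}(\log r)^{-\kappa}$ for suitable $\kappa$, rather than a pure power.

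Finally we \emph{iterate through the system}. We insert $v\ge c r^{-a}$ into the first equation; integrating $(r^{\widetilde n_+-1}(-u'))'\gtrsim r^{\widetilde n_+-1}v^{p_1}-r^{\widetilde n_+-1}|u'|^{q}$ and controlling $|u'|$ by the a priori upper gradient estimate $|u'|\le C r^{-1/(q-1)}$ (from the $q$-growth, Keller--Osserman type) produces an improved lower bound for $u$, and symmetrically for $v$.

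\begin{itemize}
\item In case (a), $p_1p_2\le q_c^2$: the exponents improve indefinitely (or reach a positive power), contradicting $u,v\to0$.
\item In case (b): the equilibrium decay rates are $(\alpha_1,\alpha_2)$, coming from the scaling $u\sim r^{-\alpha_1}$, $v\sim r^{-\alpha_2}$ balanced against $|u'|^q$. The condition $\alpha_1>\widetilde n_+-2$ means that the required decay of $u$ is faster than the fundamental-solution decay $r^{2-\widetilde n_+}$, contradicting the lower bound above.
\end{itemize}

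The main obstacle is the critical case itself. At $q=q_c$ the gradient term and the diffusion term are of the same order at the fundamental-solution scale, so the comparison functions must carry logarithmic factors. All of this has to be carried out with only $W^{2,p}$ (a.e.) second derivatives and the switching coefficient $\theta(u'')$. I would first try the barrier $r^{2-\widetilde n_+}(\log r)^{-\kappa}$ and verify, a.e., that it is a subsolution for large $r$ with $\kappa$ chosen small. I would then use $\widetilde n_+\ge3$ to guarantee that $2-\widetilde n_+<0$ and that the log corrections do not destroy the strict inequality $\alpha_1>\widetilde n_+-2$.
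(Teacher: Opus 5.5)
The main gap is in part (b): nothing in your proposal produces an \emph{upper} decay estimate. Radialization, the logarithmic barrier and the iteration through the system all give only lower bounds for $u,v$. Your sentence ``the required decay of $u$ is faster than $r^{2-\widetilde n_+}$'' presupposes $u\le Cr^{-\alpha_1}$, and you support this only by the scaling heuristic that $(\alpha_1,\alpha_2)$ balance $|u'|^{q}$ against $v^{p_1}$. In the paper this estimate is Lemma~\ref{lem8}(c). One first proves gradient domination, $-w'\ge\nu z^{p_1/q}$ and $-z'\ge\nu w^{p_2/q}$, and then integrates after the change of variables $t=\int_r^\infty\nu z^{p_1/q}$. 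Gradient domination uses $(-w')^{q-1}\ge C/r$ from Lemma~\ref{lem5}. At $q=q_c$ one has $\gamma=(q-1)(\widetilde n_+-1)=1$, and the same integrating-factor argument only gives $(-w')^{q-1}\ge C/(r\log r)$. So a direct attack at $q_c$ would have to rebuild this machinery with logarithmic losses, which you do not attempt.

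Two tools you do invoke are also wrong as stated. First, the a priori bound $|u'|\le Cr^{-1/(q-1)}$ does not follow from a supersolution inequality. Take $u=Ar^{-a}$ with $0<a<\widetilde n_+-2$ and $A$ arbitrary: then $-\mathcal M^+_{\lambda,\Lambda}(D^2u)+|\nabla u|^{q_c}\ge0$, yet $|u'|\gg r^{-(\widetilde n_+-1)}$. Moreover, the diffusion-dominated iteration $a\mapsto p_1a-2$ that this bound would drive has the Lane--Emden exponents $\bar\alpha_i$ as fixed points, not $\alpha_i$. Second, the barrier $Ar^{2-\widetilde n_+}(\log r)^{-\kappa}$ satisfies $-\mathcal M^+_{\lambda,\Lambda}(D^2\Psi)+|\nabla\Psi|^{q_c}\le0$ only if $\kappa\ge\widetilde n_+-1$. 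Its diffusion part is $-\Lambda A\kappa(\widetilde n_+-2)r^{-\widetilde n_+}(\log r)^{-\kappa-1}(1+o(1))$, while its gradient part is of order $r^{-\widetilde n_+}(\log r)^{-\kappa q_c}$. Hence ``$\kappa$ small'' fails; the correct choice is $\kappa=\widetilde n_+-1$ with $A$ small, which is exactly what the argument of Lemma~\ref{lem5} yields at $\gamma=1$.

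The idea you are missing is that the critical exponent never has to be analysed directly. The radial supersolution $(w,z)$ of Lemma~\ref{lem3} satisfies $w',z'\to0$. Hence for any $1<q'<q_c$ one has $|w'|^{q_c}\le|w'|^{q'}$ and $|z'|^{q_c}\le|z'|^{q'}$ for large $r$, so $(w,z)$ is also a supersolution with gradient exponent $q'$ outside a large ball. The hypotheses are stable as $q'\uparrow q_c$:
\begin{itemize}
\item If $p_1p_2<q_c^2$, pick $q'$ with $p_1p_2<q'^2$ and apply Theorem~\ref{thm1}, part~(1).
\item If $p_1p_2=q_c^2$, take $q'=q_c-\varepsilon$. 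Then $p_1p_2>q'^2$ and $\alpha_1(q')=q'(p_1+q')/(q_c^2-q'^2)\to+\infty$, while $\beta(q')\to\widetilde n_+-2$, so Theorem~\ref{thm1}, part~(2) applies.
\item In (b), continuity of $q\mapsto\alpha_1(q)$ and $q\mapsto\frac{2-q}{q-1}$ preserves the strict inequality $\alpha_1>\widetilde n_+-2=\beta(q_c)$ for $q'$ near $q_c$, and part~(2) applies again.
\end{itemize}
This is the paper's proof. It uses only the subcritical bounds $r^{-\beta}\lesssim w\lesssim r^{-\alpha_1}$ and needs no logarithmic barriers.
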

We now turn to the case in which $q>q_{c}$. In this regime the analysis becomes more involved, and the nonexistence results depend on the interplay between two distinct pairs of critical exponents, namely
\begin{equation}\label{1.7}
\bar{\alpha}_{1}:=\frac{2(p_1+1)}{p_1p_2-1},\quad 
\bar{\alpha}_{2}:=\frac{2(p_2+1)}{p_1p_2-1},
\end{equation}
and
\begin{equation}\label{1.8}
\hat{\alpha}_{1}:=\frac{q(p_1+2)}{p_1p_2-q},\quad 
\hat{\alpha}_{2}:=\frac{q+2p_2}{p_1 p_2-q},
\end{equation}
respectively. We note that the latter pair of exponents is relevant only in the range $p_1p_2>q$.
\begin{theorem}[Third case]\label{thm3}
Assume that $\widetilde{n}_{+}\geq3$ and that $q>q_{c}$.  
Let ($F$) be in force and \eqref{mueq} holds for some exponents satisfying $p_{1}\geq p_{2}>0.$ Then the following alternatives hold.
\begin{itemize}
\item[(a)] If $p_1p_2 \leq 1$, there exist no positive supersolutions of \eqref{main} which do not exhibit blow-up behavior at infinity.
\item[(b)] If $p_1p_2>1$, $\bar{\alpha}_{1}\geq\widetilde{n}_{+}-2$ and $\bar{\alpha}_{2}\geq\beta$, then \eqref{main} admits no positive supersolutions which do not exhibit blow-up behavior at infinity.
\item[(c)] If $p_1p_2 > 1,$ $\bar{\alpha}_{1}\geq\widetilde{n}_{+}-2,$ $\bar{\alpha}_{2}<\beta$ and $\hat{\alpha}_{1} \geq\widetilde{n}_{+}-2$, then there are no positive supersolutions of \eqref{main} which do not exhibit blow-up behavior at infinity.
\end{itemize}
\end{theorem}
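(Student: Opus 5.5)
We prove Theorem~\ref{thm3} by contradiction. Suppose $(u,v)$ is a positive supersolution of \eqref{main} that does not blow up at infinity. The first step is to replace $(u,v)$ by a radial pair. We take the radial minima $\underline u(r)=\min_{|x|=r}u$ and $\underline v(r)=\min_{|x|=r}v$; by rotation invariance of $\mathcal M^+_{\lambda,\Lambda}$ and of $|\nabla\cdot|^q$, these are again viscosity supersolutions. Using the comparison principle and a sub/supersolution construction, as in \cite{burgos2018lioville}, we then obtain a radial $W^{2,p}_{\rm loc}$ solution of the system in an annular exterior region, squeezed between a positive constant-type subsolution and $(\underline u,\underline v)$. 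For such radial profiles we invoke the monotonicity results of Section~3: $u'\le0$ and $v'\le0$ for large $r$, with $u,v$ bounded, hence $u,v\to$ finite limits. Condition (F) and the equations force these limits to be $0$; otherwise $\mathcal M^+$ of $u$ would be bounded below by a positive constant, which is incompatible with boundedness. We may therefore use (Z): $f_i(t)\ge \frac{\mu_i}{2}t^{p_i}$ along the tails.

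Next we extract the two basic decay bounds available when $q>q_c$. Since $u'<0$, the radial operator reads $-\theta(u'')u''-\lambda(n-1)u'/r$. Comparison with the fundamental solution $r^{2-\widetilde n_+}$ of $\mathcal M^+$ in the exterior domain gives the lower bounds $u,v\ge c\,r^{2-\widetilde n_+}$. This is where $\widetilde n_+\ge3$ is used. For the upper bounds we integrate the ODE inequality on dyadic annuli $[r,2r]$, in the Serrin--Zou spirit, working with the absolutely continuous quantity $r^{\widetilde n_+-1}|u'|$ instead of $u''$. This gives
\[
u(r)\gtrsim r^{2}\min_{[r,2r]}v^{p_1}\quad\text{or}\quad u(r)\gtrsim r^{q}\,(\cdot)\text{-type bounds},
\]
according to whether diffusion or the gradient term dominates. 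The gradient-domination dichotomy is decided by comparing $|u'|^q$ with $|u'|/r$, that is, $|u'|$ with $r^{-1/(q-1)}$; this is exactly the scale $r^{-\beta}$ for $u$.

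The three cases are handled as follows.

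\emph{Case (a).} Iterating the two inequalities $u(r)\ge c\,r^{2}v(2r)^{p_1}$ and $v(r)\ge c\,r^2u(2r)^{p_2}$, or their gradient-regime analogues, starting from $r^{2-\widetilde n_+}$, produces exponents that grow without bound when $p_1p_2\le1$. This contradicts $u,v\to0$.

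\emph{Case (b).} When $\bar\alpha_1\ge\widetilde n_+-2$ and $\bar\alpha_2\ge\beta$, we show that the diffusion regime persists. In that regime the radial solution must behave like the Lane--Emden system for $\mathcal M^+$, and the Armstrong--Sirakov type bound $u\lesssim r^{-\bar\alpha_1}$ together with the lower bound $r^{2-\widetilde n_+}$ forces $\bar\alpha_1<\widetilde n_+-2$. In the equality cases we need an extra logarithmic refinement, obtained by integrating $r^{\widetilde n_+-1}|u'|$ once more.

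\emph{Case (c).} Here $\bar\alpha_2<\beta$, so for $v$ the gradient term dominates. We then replace the pair $(\bar\alpha_1,\bar\alpha_2)$ by $(\hat\alpha_1,\hat\alpha_2)$, which come from balancing $u\sim r^2v^{p_1}$ against $|v'|^q\sim u^{p_2}$. Repeating the contradiction with $\hat\alpha_1\ge\widetilde n_+-2$ completes the proof.

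The main obstacle is the \emph{transversality/level-crossing step}: showing that for large $r$ the solution stays in a single regime, either $|u'|\le C r^{-1/(q-1)}$ or the reverse, rather than oscillating between them. Without $C^2$ regularity we cannot differentiate the comparison quantity pointwise. We will therefore apply the transformation $w=1-e^{-u/(q-1)}$-type, or a Cole--Hopf variant, to the absolutely continuous function $r^{\widetilde n_+-1}u'$, and use an a.e.\ sign argument at crossing points. This should yield eventual monotonicity of $r^{1/(q-1)}|u'|$, which we expect to be the delicate part.
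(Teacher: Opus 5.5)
\textbf{Gap: the upper bounds in (b) and (c).} You never prove $w\lesssim r^{-\bar\alpha_1}$ in (b) or $w\lesssim r^{-\hat\alpha_1}$ in (c), and the dyadic-annulus integration you propose cannot produce them by itself. Integrating the radial inequality over $[r,2r]$ gives relations like $v^{p_1}(2r)\lesssim u(r)r^{-2}+u(r)^{q}r^{-q}$ (as in Lemma~\ref{lem2}). These are lower bounds for $u$ in terms of $v$, and turning them into upper bounds needs two things. First, a doubling property $u(2r)\gtrsim u(r)$: for $\mathcal M^+$ this comes from monotonicity of $r^{\widetilde n_+-2}m_u(r)$, which the gradient term destroys unless you already know $u\lesssim r^{-\beta}$. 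Second, you must be able to drop $u^qr^{-q}$ and $v^qr^{-q}$, which again needs $u,v\lesssim r^{-\beta}$. If you keep those terms, the inequalities for $r^{\bar\alpha_1}u$ and $r^{\bar\alpha_2}v$ close in general only when $p_1p_2>q^2$, which is not assumed in (b) or (c). So everything rests on ``the diffusion regime persists,'' which you leave as an expectation. Applying a Cole--Hopf transform ``to $r^{\widetilde n_+-1}u'$'' is not a defined procedure.

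The paper avoids any regime analysis here. It applies the Pol\'a\v{c}ik--Quittner--Souplet doubling lemma to $N=u^{1/\bar\alpha_1}+v^{1/\bar\alpha_2}+|\nabla u|^{1/(\bar\alpha_1+1)}+|\nabla v|^{1/(\bar\alpha_2+1)}$ (Lemma~\ref{lem9}). After rescaling, each gradient term carries the factor $\varrho_k^{\bar\alpha_i+2-(\bar\alpha_i+1)q}$, which tends to $0$ exactly when $\bar\alpha_i>\beta$; this is where $\bar\alpha_2\ge\beta$ is used. The limit is then either the Pucci Lane--Emden system, excluded by Quaas--Sirakov since $\bar\alpha_1\ge\widetilde n_+-2$, or, when $\bar\alpha_2=\beta$, the mixed system of Theorem~\ref{thm5}. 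In (c), a one-dimensional radial blow-up with $(\hat\alpha_1,\hat\alpha_2)$ (Lemma~\ref{lem10}) leads to $-\bar u''=c\bar v^{p_1}$, $|\bar v'|^q=d\bar u^{p_2}$, whose only bounded nonnegative solution is trivial. The same blow-up also gives $|w'|\lesssim r^{-(\bar\alpha_1+1)}$. Your logarithmic refinement needs this so that $|w'|^q=o(r^{-\widetilde n_+})$.

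\textbf{The lower bound is mis-justified.} The function $\Phi=r^{2-\widetilde n_+}$ has $\mathcal M^+(D^2\Phi)=0$, but $-\mathcal M^+(D^2\Phi)+|\nabla\Phi|^q=|\nabla\Phi|^q>0$. So $\Phi$ is a strict supersolution of the relevant operator, not a lower barrier. The bound is nonetheless true, and it is $q>q_c$, not merely $\widetilde n_+\ge3$, that makes it work. Set $y=-w'$. The inversion lemma turns $\theta(w'')w''\le\lambda(n-1)y/r+y^q$ into $y'+(\widetilde n_+-1)y/r\ge-y^q/\Lambda$. Then $Y=y^{1-q}$ satisfies $(r^{-\gamma}Y)'\le\frac{q-1}{\Lambda}r^{-\gamma}$ with $\gamma=(q-1)(\widetilde n_+-1)$, and $\gamma>1$ if and only if $q>q_c$. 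This yields $-w'\gtrsim r^{1-\widetilde n_+}$ and hence $w\gtrsim r^{2-\widetilde n_+}$.

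In the equality case of (b) you must also first upgrade to $z\gtrsim r^{-\bar\alpha_2}$ via Lemma~\ref{lem6}. The bound $z\gtrsim r^{2-\widetilde n_+}$ gives $z^{p_1}\gtrsim r^{-\widetilde n_+}$ only when $p_1=p_2$; otherwise $\int^r s^{\widetilde n_+-1}z^{p_1}\,ds$ produces no logarithm.

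Your case (a), by contrast, is a sound alternative. Iterating the two-branch inequality above from any power lower bound strictly decreases the decay exponent when $p_1p_2\le1$, which contradicts $u,v\to0$. This is more self-contained than the paper's perturbation $p_1\mapsto p_1+t$, which reduces (a) to case (b).
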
 
 Our next result is concerned with nonexistence of positive supersolution under the conditions on $f_{1}$ and $f_{2}$ near infinity, more precisely
\begin{equation}\label{infinity}
\lim_{|x|\to\infty}u(x)=\lim_{|x|\to\infty}v(x)=+\infty,
\end{equation}
\begin{remark}\hspace{-1cm}
\begin{enumerate}
\item We can see that if $p_1p_2>1$ and $\alpha_{2}\le\beta$ then, $p_1p_2>q,$ so the exponents in \eqref{1.7} are positive and as well as well defined .
\item If $n=2$, then it is not covered by Theorem~\ref{thm2} and Theorem~\ref{thm3}. Indeed, this will be proved in the forthcoming Lemma \ref{lem12} when $n=2$ and $q \ge 2$, every positive supersolution of system \eqref{main} satisfies condition \eqref{infinity}. Consequently, this two–dimensional case will be treated separately in the next theorem.
\end{enumerate}
\end{remark}
 We conclude by considering positive supersolution which verify \eqref{infinity}, In this case, only the behaviour of $f_1$ and $f_2$  at infinity. Hence We assume that
\begin{equation}\label{infinity2} 
\liminf_{t\to+\infty}\frac{f_1(t)}{t^{p_1}}>0,\quad
\liminf_{t\to+\infty}\frac{f_2(t)}{t^{p_2}}>0,   
\end{equation}
for some $p_1,p_2>0$.
\begin{theorem}\label{thm4}
Let $\widetilde{n}_{+} \ge 2.$ Let ($F$) be in force and \eqref{infinity2} holds for some exponents satisfying $p_{1},p_{2}>0.$ Furthermore, if $p_{1}p_{2}>q^2,$ then  \eqref{main} admits no positive supersolutions that exhibit blow-up behavior at infinity.
\end{theorem}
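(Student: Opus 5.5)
We argue by contradiction. Suppose $(u,v)$ is a positive supersolution of \eqref{main} in $\mathbb{R}^n\setminus B_{R_0}$ satisfying \eqref{infinity}. By \eqref{infinity2} and (F) there are $c_0>0$ and $T_0>0$ with $f_i(t)\ge c_0 t^{p_i}$ for $t\ge T_0$. Since $u,v\to+\infty$, we can choose $R_1\ge R_0$ such that $u,v\ge T_0$ in $|x|\ge R_1$. There $(u,v)$ is a supersolution of the power system
\[
-\mathcal M^+(D^2u)+|\nabla u|^q\ge c\,v^{p_1},\qquad -\mathcal M^+(D^2v)+|\nabla v|^q\ge c\,u^{p_2}.
\]
Only the behaviour of $f_i$ at infinity is used here, which is why the hypothesis on $\widetilde n_+$ is so mild.

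\textbf{Step 1: reduction to radial profiles.} Because $\mathcal M^+$ is rotation invariant, we pass to the radial minima $\underline u(r)=\min_{|x|=r}u$ and $\underline v(r)=\min_{|x|=r}v$. I would use the comparison/minimum argument to be established in Section~3, which is based on the fact that the infimum over rotations of supersolutions is a supersolution. Together with the $W^{2,p}$ level-crossing tools, this should give radial supersolutions, absolutely continuous with AC derivative, that still tend to $+\infty$. In radial form, with $\theta$ as in \eqref{theta12}, the system reads
\[
-\theta(u'')u''-\theta(u')\tfrac{n-1}{r}u'+|u'|^q\ge c\,v^{p_1}\quad\text{a.e.}
\]
Since $u\to\infty$, the eventual monotonicity results of Section~3 give $u'>0$ for large $r$ (and likewise $v'>0$). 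Hence $\theta(u')=\Lambda$, and $u$ is increasing.

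\textbf{Step 2: a differential inequality forcing growth limits.} For large $r$ we have $u'>0$, so
\[
-\theta(u'')u''\ge c v^{p_1}+\Lambda\tfrac{n-1}{r}u'-(u')^q .
\]
The key observation is that, since $u$ and $v$ are unbounded and increasing, the right-hand side must not stay large, or $u''$ would be strongly negative. To make this quantitative I would use the nonlinear transformation adapted to the gradient, setting $w=u'$ and treating $(u')^q$ as the dominating competitor. If $c\,v^{p_1}\ge 2(u')^q$ on a long interval, then $u''\le -c' v^{p_1}$ there, which is incompatible with $u'>0$ over that interval. This gradient-domination mechanism should yield, for large $r$ off a negligible set,
\[
(u')^q\gtrsim v^{p_1},\qquad (v')^q\gtrsim u^{p_2}.
\]

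\textbf{Step 3: ODE blow-up.} From $u'\ge c\,v^{p_1/q}$ and $v'\ge c\,u^{p_2/q}$, with $u,v$ increasing to infinity, a standard system argument shows finite-distance blow-up exactly when $\frac{p_1}{q}\cdot\frac{p_2}{q}>1$, i.e. $p_1p_2>q^2$. For instance, choose $\gamma=p_1/q$ and consider $\Phi=u^{a}+v^{b}$ with exponents matched to the scaling. Alternatively, integrate $u'\ge c v^{p_1/q}$ and substitute into the second inequality to obtain $v'\ge c\,(\int v^{p_1/q})^{p_2/q}$, an Osgood-type inequality whose solutions blow up in finite $r$ because $p_1p_2/q^2>1$. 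This contradicts the fact that $u,v$ are finite on all of $[R_1,\infty)$.

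The main obstacle is Step 2. In the $W^{2,p}$ setting $u''$ exists only a.e., and $\theta(u'')$ switches between $\lambda$ and $\Lambda$, so the pointwise domination $(u')^q\gtrsim v^{p_1}$ must be obtained in integrated form. I would first try the argument via the quantity $z=(u')^q-c v^{p_1}/2$, using transversality at its zero crossings: where $z<0$ the derivative $u'$ strictly decreases, and $v$ is increasing, so such a set cannot contain arbitrarily long intervals without forcing $u'\le0$. I would then control the excess set by integrating, which should suffice for the ODE comparison in Step 3.
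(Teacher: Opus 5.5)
\textbf{Overall.} Your architecture matches the paper's: radial reduction, eventual positivity of $u',v'$, gradient domination through $(u')^q-\frac c2v^{p_1}$ (the paper's $H_1=c_1z^{p_1}-2(w')^q$ up to sign and a factor), and finite-distance blow-up.

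\textbf{The gap is in Step~2.} What you actually derive is that the bad set $\{(u')^q<\frac c2v^{p_1}\}$ contains no long intervals. That does not make it negligible, since it could be a union of infinitely many short intervals. ``Control the excess set by integrating'' is not an argument, and Step~3 needs the domination at every large $r$.

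The missing observation is that the bad set is forward invariant, and it follows from the monotonicity you already noticed. Set $G=(u')^q-\frac c2v^{p_1}$, which is locally absolutely continuous where $u',v'>0$. On $\{G<0\}$ the equation gives $-\theta(u'')u''\ge cv^{p_1}-(u')^q>\frac c2v^{p_1}>0$. Hence $u''\le-\frac{c}{2\lambda}v^{p_1}<0$ a.e.\ there, and
\[
G'=q(u')^{q-1}u''-\tfrac{c\,p_1}{2}v^{p_1-1}v'<0\quad\text{a.e. on }\{G<0\}.
\]
So if $G(r_0)<0$, then $G\le G(r_0)<0$ on the maximal interval $[r_0,r_1)$ where $G<0$, and continuity forces $r_1=\infty$. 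On $[r_0,\infty)$ this gives $u''\le-\frac{c}{2\lambda}v(r_0)^{p_1}$ a.e., so $u'\to-\infty$, contradicting $u'>0$. Therefore $(u')^q\ge\frac c2v^{p_1}$, and symmetrically $(v')^q\ge\frac c2u^{p_2}$, for all large $r$ with no exceptional set. This is exactly what Lemmas~\ref{lem:crossing} and~\ref{lem:gradient-domination} establish, phrased there as upward transversality of $H_1$ at its zeros.

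\textbf{Step~1 needs repair.} The infimum over rotations only gives a radial \emph{viscosity} supersolution $(m_u(|x|),m_v(|x|))$. It can have concave kinks and need not be $C^1$, and no level-crossing tool upgrades its regularity. Step~2, however, uses that $u'$ is absolutely continuous. Use instead the construction of Lemma~\ref{lem3}:
\begin{itemize}
\item solve $-\mathcal M^+(D^2w)+|\nabla w|^q=\lambda_1f_1(m_v(|x|))$, and the analogous problem for $z$, on annuli; the solutions are radial by uniqueness and lie in $W^{2,s}_{\mathrm{loc}}$;
\item pass to the limit;
\item use $w\ge m_u(R_0)>0$, $z\ge m_v(R_0)>0$ together with Lemma~\ref{lem2} to keep the blow-up at infinity.
\end{itemize}

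\textbf{Eventual positivity.} You do not need the exponential transformation of Lemma~\ref{lem:eventual-monotonicity}, which is stated only for $q\le2$ although the theorem allows any $q>1$. The right-hand side is bounded below by a positive constant on compacts, so $u''<0$ a.e.\ near any zero of $u'$. Thus $u'$ crosses zero only downward, so it has at most one zero, and $u\to\infty$ then gives $u'>0$ eventually for every $q>1$.

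\textbf{Step~3 is fine.} Your Osgood version is the same computation as the paper's change of variables. Set $m=p_1/q$, $k=p_2/q$, $I=\int_R^r v^m$. Multiplying $v'\ge c\,I^k$ by $v^m=I'$ and integrating gives $v^{m+1}\gtrsim I^{k+1}$. Hence $I'\gtrsim I^{m(k+1)/(m+1)}$, with exponent $>1$ exactly when $p_1p_2>q^2$, which forces finite-distance blow-up.
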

In order to prove these theorem we need some intermediate results in which some of them is important in itself.
\section{Intermediate results}
We have decoupled system of fully nonlinear elliptic equations \eqref{main}. If we consider this system of equation in annulus domain then the solution is $C^{1,\alpha}$ (see \cite{nornberg2019c1}). Furthermore, the operator considered here is rotationally invariant. So we can apply the result of Theorem 1.1 in \cite{dos2020symmetry} to get all the solution of $ \mathcal{M}_{\lambda,\Lambda}^+(D^2u)+|\nabla u|^q=\lambda_{1} f_{1}(v)$ is radial in the annular domain.
\subsection{Classification of Supersolutions and Radial Reduction}
Let us define
\begin{equation}\label{min} 
m_u(R):=\min_{|x|=R}u(x).
\end{equation}
\begin{remark}\label{monotone} 
Observe that if $(u,v)$ are nonnegative solutions of \eqref{main}, then it also satisfies 
\[-\mathcal{M}_{\lambda,\Lambda}^+(D^2u)+|\nabla u|^q\ge 0\]
so as a consequence of comparison principle we find that the function $m_u(R)$ is monotone decreasing for all $R>R_{1}$ for some $R_1.$ Same is true for $m_{v}(R).$
\end{remark}
Our next result is related to the existence of supersolutions of \eqref{main} which blow up at infinity and which do not blow up at infinity.
\begin{lemma}\label{lem2}
Let $q>1$ and $f_1,f_2$ satisfying ($F$). Suppose that $(u,v)\in C^{1}( \mathbb{R}^{n} \setminus B_{R_0})^{2}$ is a positive supersolution of \eqref{main}. 
Denote by $m_u(R)$ and $m_v(R)$ the radial minimum functions defined in \eqref{min}. Then one of the following alternatives holds.
\begin{enumerate}
\item[(a)] Given functions $m_u$ and $m_v$ are nonincreasing, bounded and converge to $0$ as $R\to\infty$.
Moreover, if $f_1$ and $f_2$ satisfy \eqref{mueq}, then there exists positive constant $C>0$ such that
\[
\begin{cases}
m_v^{p_{1}}(2R)\le C\left(\frac{m_u(R)}{R^2}+\frac{m_u^q(R)}{R^q}\right),\\
m_u^{p_2}(2R)\le C\left(\frac{m_v(R)}{R^2}+\frac{m_v^{q}(R)}{R^q}\right),
\end{cases}
\]
for all $R>R_0$. Or,
\item[(b)] The functions $m_u(R)$ and $m_v(R)$ are nondecreasing and diverge to infinity as $R\to\infty$.
\end{enumerate}
\end{lemma}
\begin{proof} We will follow the idea of \cite{Alessandra,burgos2018lioville}. Let  $\varphi\in C_0^\infty(\mathbb{R})$ be a cut-off function such that $0\le \varphi \le 1,$
\[
\begin{cases}
\varphi =0& \text{in}~~(-\infty,1)\cup(4,+\infty),\\
\varphi =1& \text{in}~~[2,3].
\end{cases}
\]
For $R>R_0,$ let us consider the following test function $\psi(x)=m_u(2R)\varphi\left(\frac{|x|}{R}\right),~~|x|>R_0.$
Note that there exists a point $x_R \in \partial B_{2R}$ such that $u(x_{R})-\psi(x_{R})=0$. Moreover, since $u>\psi$ in $\mathbb{R}^{n}\setminus B_{4R}$ and $B_R\setminus B_{R_0}$, the function $\psi$ touches $u$ at some point $y_R\in B_{4R}\setminus B_R,$ as $u$ is a supersolution we have
\begin{equation}\label{super11}
f_{1}(v(y_{R}))\leq-\mathcal{M}{_{\lambda,\Lambda}^+}(D^2\psi(y_R))+|D\psi(y_{R})|^{q}.
\end{equation}
\[D^{2}\psi(y_R)=\frac{m_u(2R)}{R^{2}}
\varphi^{\prime\prime}\left(\frac{|y_R|}{R}\right)\frac{y_R\otimes y_R}{|y_R|^{2}}
+\frac{m_u(2R)}{R}\frac{\varphi^{\prime}\left(\frac{|y_R|}{R}\right)}{|y_R|}
\left(I-\frac{y_R\otimes y_R}{|y_R|^{2}}\right).\]
Since $y_R\in B_{4R}\setminus B_R,$  we have $R\le |y_R|\le 4R,$
and therefore $\frac{1}{|y_R|}\leq\frac{C}{R}.$ Using that $\varphi^{\prime},\varphi^{\prime\prime}$ are bounded and
\[\left|\frac{y_R\otimes y_R}{|y_R|^{2}}\right|\leq1,\quad\left|I-\frac{y_R\otimes y_R}{|y_R|^{2}}\right|\leq C,\]
we find $|D^{2}\psi(y_R)|\leq C\frac{m_u(2R)}{R^{2}}.$ We also know that
$\left|\mathcal M_{\lambda,\Lambda}^{+}(X)\right|\leq C(n,\lambda,\Lambda)|X|,$
which implies that
\[\left|\mathcal M_{\lambda,\Lambda}^{+}\bigl(D^{2}\psi(y_R)\bigr)\right|\leq C\frac{m_u(2R)}{R^{2}},\]
consequently, we have
\[-\mathcal M_{\lambda,\Lambda}^{+}\bigl(D^{2}\psi(y_R)\bigr)\leq C\frac{m_u(2R)}{R^{2}}.\]
Using the above values in \eqref{super11}, we have 
\[
f_{1}(v(y_{R}))\leq-\mathcal{M}{_{\lambda,\Lambda}^+}(D^2 \psi(y_R))+|D\psi(y_{R})|^{q}\leq C\left( \frac{m_u(2R)}{R^2}+\frac{m_u^{q}(2R)}{R^q} \right),
\]
where $C$ is a positive constant depending on $n,\lambda,\Lambda,\varphi$ that is $C=C(n,\lambda,\Lambda,\varphi)$ and $R$ is sufficiently large. Furthermore, as $(u,v)$ is a supersolution of \eqref{main} so 
\begin{equation}\label{super1}
f_{1}(v(y_R))\le C\left(\frac{m_u(2R)}{R^2}+\frac{m_u^q(2R)}{R^q}\right).
\end{equation}
Let us start by considering the case  $m_u(R)$ is bounded, consequently by \eqref{super1}, we have  \[\lim_{R\to+\infty} f_{1}(v(y_R))=0.\] Moreover, as $f_1$ is nondecreasing and positive  in $(0,+\infty)$, we get $\lim_{R\to+\infty}v(y_R)=0.$\\
From the Remark \eqref{monotone}, we see that $m_{u}$ and $m_{v}$ are monotonic decreasing. Also we have assumed that these are bounded so we find that $\lim_{R\to+\infty} m_v(R)=0$. Similarly for $v$, it also follows that $m_u$ nonincreasing for large $R$ and $\lim_{R\to+\infty} m_u(R)=0.$ Now observe that $y_{R}\in B_{4R}\setminus B_{R}$ so we have $v(y_{R})\geq m_{v}(4R).$ Thus by using \eqref{mueq}, we get $f_1(v(y_R))\ge Cv^{p_1}(y_R)\geq Cm_v^{p_1}(4R)$ for sufficiently large $R.$
Hence
\[m_v^{p_{1}}(4R)\leq C\left(\frac{m_u(2R)}{R^2}+\frac{m_u^q(2R)}{R^{q}} \right)~~\text{for large} ~R.\]
Similarly,
\[m_u^{p_{2}}(4R)\leq C\left(\frac{m_v(2R)}{R^2}+\frac{m_v^{q}(2R)}{R^{q}} \right)~~\text{for large}~R.\]
Replacing $2R$ by $R$  gives the estimates in $(a).$ \\
In the case $(b)$ we assume that $m_u$ and $m_v$ both the unbounded. Indeed, if one of them were bounded, the previous argument would imply that the other is also bounded and part $(a)$ holds. Also in view of Remark \eqref{monotone}, we  have that $m_u$ and $m_v$ are non decreasing for a sufficiently large $R$, such that
\[\lim_{R\to+\infty} m_u(R)=\lim_{R\to+\infty} m_v(R)=+\infty.
\]
This completes the proof.
\end{proof}
\begin{remark}
In the following two lemmas, we show that the existence of a positive supersolution of \eqref{main} guarantees the existence of a positive radially symmetric supersolution. Moreover, when the supersolution do not blow up at infinity, one can further deduce the existence of a positive radial solution.
\end{remark}
\begin{lemma}\label{lem3}
Let $n \geq 2$ and $q>1$ and $(F)$ applies. Assume that system \eqref{main} admits a positive viscosity supersolution $(u,v)$. Then there is a positive radially symmetric supersolution $(w,z)$ of \eqref{main}. 
Moreover, if $(u,v)$ does not exhibit blow-up behavior at infinity, then
\[w(R)\to 0,~~z(R)\to0,~~w'(R)\to 0,~~z'(R)\to 0 
\quad\text{as}~~R\to+\infty.\]
On the other hand, if $(u,v)$ exhibits blow-up behavior at infinity, then we can say the radial supersolution $(w,z)$ is also exhibits blow-up behavior at infinity.
\end{lemma}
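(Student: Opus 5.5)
The idea is to follow the scheme of \cite{burgos2018lioville}: solve a radial problem on annuli, squeezed between a small radial subsolution and the given supersolution $(u,v)$, and then let the annuli exhaust the exterior domain.

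\emph{Barrier.} Fix $R_1>R_0$. We need a radial function $\underline w$ that is a strict subsolution of the homogeneous equation $-\mathcal M^+_{\lambda,\Lambda}(D^2\underline w)+|\nabla\underline w|^q\le 0$. Natural candidates are a small multiple of a decreasing fundamental-type profile $r^{2-\widetilde n_+}$ (or $\log$ when $\widetilde n_+=2$), or a solution of the radial ODE
\[
\theta(w'')w''+\theta(w')(n-1)\frac{w'}{r}=|w'|^q .
\]
Since $f_1,f_2>0$, such a $\underline w$ is also a subsolution of the full system with right-hand sides $\lambda_1f_1(\cdot)$, $\lambda_2f_2(\cdot)$. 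By the comparison principle for the homogeneous operator (Remark \ref{monotone}), applied on $B_{R}\setminus B_{R_1}$, we can choose $\varepsilon>0$ so that $\varepsilon\underline w\le m_u,m_v$ on both boundary spheres, and hence $\varepsilon\underline w\le u$ and $\varepsilon \underline w\le v$ in the annulus.

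\emph{Radial solutions on annuli.} For each $R>R_1$ we solve the system in $A_R=B_R\setminus \overline{B_{R_1}}$ with boundary data $w=z=\varepsilon\underline w$. The radial structure is imposed by using $m_u(|x|)$ and $m_v(|x|)$ as supersolutions. The point is that the infimum over rotations of a viscosity supersolution of a rotationally invariant equation is again a viscosity supersolution, so $U(x)=\min_{|y|=|x|}u(y)$ is a radial viscosity supersolution, since each $u\circ O$ with $O$ a rotation is one and $f_1$ is nondecreasing. Likewise $V$ is a radial supersolution, and $(U,V)$ is a supersolution of the cooperative system because $f_1,f_2$ are nondecreasing. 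A monotone iteration for the cooperative system between the ordered pair $(\varepsilon\underline w,\varepsilon\underline w)\le(U,V)$ then produces a solution $(w_R,z_R)$ on $A_R$. This uses solvability of the Dirichlet problem for $-\mathcal M^+(D^2w)+|\nabla w|^q=g$ on the annulus, which holds via Perron's method under $W^{2,p}$/$C^{1,\alpha}$ regularity \cite{nornberg2019c1}. The solution is radial either by uniqueness together with rotation invariance, or by \cite{dos2020symmetry}.

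\emph{Passage to the limit.} The solutions satisfy $\varepsilon\underline w\le w_R\le U$, and similarly for $z_R$, so they are locally bounded uniformly in $R$. The $C^{1,\alpha}$ estimates then give local compactness, and by stability of viscosity solutions we obtain a radial positive pair $(w,z)$ solving the system in $|x|>R_1$. This proves the first assertion; replacing $R_0$ by $R_1$ is harmless.

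\emph{Asymptotic behaviour.} If $(u,v)$ does not blow up, Lemma \ref{lem2}(a) gives $m_u,m_v\to0$, hence $0<w\le m_u\to0$, and the same for $z$. For $w'\to0$, Remark \ref{monotone} applied to $w$ shows that $w$ is eventually monotone, hence decreasing with $w'\le0$. The radial equation, with $\theta(w')=\lambda$, then gives $(r^{\,\widetilde n_+-1}|w'|)'\ge -Cr^{\widetilde n_+-1}|w'|^q$, or a comparable differential inequality. Since $w$ is bounded and decreasing, $w'$ is integrable at infinity, and combining this with the differential inequality yields $w'\to0$. Alternatively, rescaled $C^{1,\alpha}$ estimates on the annuli $B_{2r}\setminus B_{r}$ give $|w'(r)|\le C\bigl(\sup_{B_{2r}\setminus B_{r/2}} w\bigr)/r+o(1)\to0$.

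In the blow-up case we take the radial construction with the opposite ordering. Using $m_u\to\infty$ from Lemma \ref{lem2}(b), a comparison argument on the exterior region shows that $w\ge$ a quantity forced to infinity. More precisely, Lemma \ref{lem2} applied to $(w,z)$ leaves two alternatives, and alternative (a) is excluded because $w$ is built from $U$ with boundary data chosen increasing.

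\emph{Main obstacle.} The delicate step is the construction on the annuli: choosing barriers compatible with the gradient term, and justifying the monotone iteration and the radial symmetry at the viscosity/$W^{2,p}$ level. Showing that $w'\to0$, as opposed to just $w\to0$, and handling the blow-up alternative are secondary difficulties.
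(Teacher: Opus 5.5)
You have a genuine gap: the lower barrier does not do what your coupled scheme needs, and the blow-up alternative is not proved.

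\textbf{The lower barrier.} You solve the \emph{coupled} system on each annulus, so the right-hand sides $\lambda_1 f_1(z_R)$, $\lambda_2 f_2(w_R)$ change with $R$. For pure powers the trivial pair solves the limiting system. Nothing in the equations therefore stops $(w_R,z_R)$ from collapsing to zero as $R\to\infty$, except a subsolution bounded below uniformly in $R$. Your barrier step does not supply one.
\begin{itemize}
\item The profile $r^{2-\widetilde n_+}$ is not a subsolution. With $a=\widetilde n_+-2$ you get $\mathcal M^+_{\lambda,\Lambda}(D^2 r^{-a})=\Lambda a(a+2-\widetilde n_+)r^{-a-2}=0$. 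Hence $-\mathcal M^+_{\lambda,\Lambda}(D^2(\varepsilon r^{-a}))+|\nabla(\varepsilon r^{-a})|^q=\varepsilon^q a^q r^{-q(a+1)}>0$, which is the supersolution inequality; the gradient term always favours supersolutions.
\item With the ODE profile, $\varepsilon\underline w$ with $\varepsilon<1$ is a genuine subsolution. But choosing one $\varepsilon$ with $\varepsilon\underline w(R)\le m_u(R)$ on every outer sphere presupposes a lower bound on the decay of $m_u$. That is exactly what the comparison was meant to produce, so as written the argument is circular.
\item You could repair it by comparing $u$ with $\varepsilon(\underline w-\underline w(\rho))$ on $B_\rho\setminus B_{R_1}$ and letting $\rho\to\infty$. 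That, however, first requires proving that the radial homogeneous equation has a positive solution tending to $0$, which you have not done.
\end{itemize}

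\textbf{The blow-up case.} Your boundary data $\varepsilon\underline w$ tend to $0$, so nothing bounds $w$ below by a positive constant. Lemma~\ref{lem2} applied to $(w,z)$ therefore does not exclude alternative (a). The remark about ``boundary data chosen increasing'' describes a different construction, which you never carry out.

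\textbf{How the paper avoids both problems.} It freezes the right-hand sides at $\lambda_1f_1(m_v(|x|))$ and $\lambda_2f_2(m_u(|x|))$, which are positive and independent of the annulus. The limit is then automatically nontrivial, and positive by the strong maximum principle. It solves the \emph{decoupled} Dirichlet problems on $\mathcal A(R_0,R_1)$ with the constant datum $\min\{m_u(R_0),m_u(R_1)\}$, which is itself a subsolution. It then compares directly with $u$ to obtain $\min\{m_u(R_0),m_u(R_1)\}\le\hat w_{R_1}\le m_u$. The supersolution property of the coupled system follows afterwards from $z\le m_v$ and the monotonicity of $f_1$. In the blow-up case this constant equals $m_u(R_0)$ for large $R_1$, so $w\ge m_u(R_0)>0$, and Lemma~\ref{lem2} forces $w,z\to\infty$.

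\textbf{What is sound in your proposal.} The function $m_u(|x|)$ is indeed a radial viscosity supersolution, as an infimum over rotations. Your argument for $w'\to0$ also works and is a legitimate alternative to the paper's translation-plus-strong-maximum-principle step. Set $Y=-w'$; it satisfies $Y'\ge-(\widetilde n_+-1)Y/r-Y^q/\Lambda$ a.e. So $Y$ cannot fall from $\delta$ to $\delta/2$ over an interval shorter than some $c_\delta>0$. This is incompatible with $\int_r^\infty Y=w(r)\to0$.
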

\begin{proof}
Let us consider $R_{1} > R_0$ and define the following annulus  
\[\mathcal{A}(R_0,R_{1}):=\{x\in\mathbb{R}^{n}:R_0<|x|<R_{1}\}.\]
For $R > R_0$, consider the following decoupled system 
\begin{equation}
\begin{cases}
-\mathcal{M}^+(D^2w)+|\nabla w|^{q}=\lambda_1 f_{1}(m_v(|x|))&\text{in}~~\mathcal{A}(R_0,R_{1}),\\
-\mathcal{M}^+(D^2z)+|\nabla z|^{q}=\lambda_2f_{2}(m_u(|x|))&\text{in}~~\mathcal{A}(R_0,R_{1}),\\
w=z=0&\text{on}~~\partial\mathcal{A}(R_0,R_{1}),
\end{cases}
\end{equation}
where $m_u, m_v$ is defined by \eqref{min}. The considered operator satisfy the comparison principle see Theorem 3.3 in \cite{crandall1992user}. Moreover, it is easy to see that $\underline{z}=\underline{w}=0$ subsolution and for large value of $M$ the function $\overline{z}=\overline{w}=M(|x|-R_0)(R_{1}-|x|)$ is a supersolution. Consequently the existence of solution follows from Perron's method, see Theorem 4.1 in \cite{crandall1992user}.  Moreover, as the considered operator is concave in the Hessian, so by the result of Section 5 in \cite{swiech2020pointwise},  $z_{R_1}, w_{R_{1}} \in W_{\text{loc}}^{2,s}(\mathcal{A}(R_0,R_1))$, for some $s>n$. Furthermore, our considered operator satisfies the comparison principle so the uniqueness follows thereby both $w_{R_1}$ and $z_{R_1}$ must be radially symmetric.
Set
\[\hat w_{R_{1}}:=\min\{m_u(R_0),m_u(R_{1})\}+z_{R_{1}},~~
\hat w_{R_{1}}:=\min\{m_v(R_0),m_v(R_{1})\}+z_{R_{1}}\]
and notice that $\hat w_{R_{1}}$ and $\hat z_{R_{1}}$ are radially symmetric solutions of
\begin{equation}\label{3.7}
\begin{cases}
-\mathcal{M}^+(D^2w) + |\nabla w|^{q}=\lambda_{1} f_{1}(m_v(|x|))&\text{in}~~\mathcal{A}(R_0,R_{1}),\\
w=\min\{m_u(R_0),m_u(R_{1})\}&\text{on}~~\partial \mathcal{A}(R_0,R_{1}),
\end{cases}
\end{equation}
and
\begin{equation}\label{3.8}
\begin{cases}
-\mathcal{M}^+(D^2z)+|\nabla z|^{q}=\lambda_2f_2 (m_u(|x|))&\text{in}~~\mathcal{A}(R_0,R_{1}),\\
z=\min\{m_v(R_0),m_v(R_{1})\}& \text{on}~~\partial\mathcal{A}(R_0,R_{1}),
\end{cases}
\end{equation}
respectively. Since $f_{1}$ and $f_{2}$ are monotone, so $f_{1}(u)\geq f_{1}(m_u(|x|)),~~f_{2}(v) \geq f_{2}(m_v(|x|)).$  Also in view of \eqref{main}, we get
\[
\begin{cases}
-\mathcal{M}^+(D^2u)+|\nabla u|^{q}=\lambda_{1}f_{1}(v(x))\ge\lambda_1f_{1}(m_v(|x|)),\\
-\mathcal{M}^+(D^2v) + |\nabla v|^{q} =\lambda_2 f_{2}(u(x))\ge\lambda_2f_{2}(m_u(|x|)).
\end{cases}
\]
Thus $u$ and $v$ are supersolution of \eqref{3.7} and \eqref{3.8} respectively. Thus by comparison principle we have 
\[
0\leq\hat w_{R_{1}}\leq u,~~0\leq \hat z_{R_{1}}\leq v~~\text{in}~~\mathcal{A}(R_0,R_{1}).
\]
Hence the families $\{\hat z_{R_{1}} \}_{R_{1}>R_0}$ and  $\{\hat w_{R_{1}} \}_{R_{1}>R_0}$ are bounded. Moreover, as a consequence of local $W^{2,s}$ estimate and Sobolev embedding we find that  $\{\hat w_{R_{1}}\}$ and $\{\hat z_{R_{1}}\}$ are uniformly bounded in $C^{1,\alpha}_{\text{loc}}.$ Therefore, by Arzela-Ascoli Theorem and diagonal argument, there exists a sequence $\{R_{1,n}\}_{n\ge1}$ satisfying $R_{1,k}\to +\infty$ such that 
\[
\left\{
\begin{aligned}
&\lim_{k\to\infty}R_{1,k}=+\infty&\text{in}~~C^{1}_{\mathrm{loc}}(\mathbb{R}^{n}\setminus B_{R_0}),\\
&\lim_{k\to\infty}\hat w_{R_{1,k}}=w&\text{in}~~C^{1}_{\mathrm{loc}}(\mathbb{R}^{n}\setminus B_{R_0}),\\
&\lim_{k\to\infty}\hat z_{R_{1,k}}=z&\text{in}~~C^{1}_{\mathrm{loc}}(\mathbb{R}^{n}\setminus B_{R_0}).
\end{aligned}
\right.
\]
We denote 
\[ 
l_u:=\lim_{R\to\infty} m_u(R)~~\text{and}~~l_v:=\lim_{R\to\infty} m_v(R),
\]
 then the radially symmetric solution of $w$ is given by
\[
\begin{cases}
-\mathcal{M}^+(D^2w)+|\nabla w|^{q}=\lambda_1 f_{1}(m_v(|x|))
&\text{in}~~\mathbb{R}^{n}\setminus B_{R_0},\\
\text{with}~~\min\{m_u(R_0),l_u\}\le w\le m_u,
\end{cases}
\]
similarly, $z$ is radially symmetric solution of
\[
\begin{cases}
-\mathcal{M}^+(D^2z)+|\nabla z|^{q}=\lambda_{2}f_{2}(m_u(|x|))&\text{in}~~ \mathbb{R}^{n}\setminus B_{R_0},\\
\text{with}~~\min\{m_v(R_0),l_v\}\le z\le m_v.
\end{cases}
\]
By the regularity of the equation and radial symmetry give that $w$ and $z$ are classical solutions. Moreover, the monotonicity of $f_{1}$ and $f_{2}$ implies that $f_{1}(m_v(|x|)) \ge f_{1}(z),~f_{2}(m_u(|x|)) \ge f_{2}(w),$ so $(w,z)$ is a positive and radially symmetric supersolution of
\begin{equation}\label{2.7}
\begin{cases}
-\mathcal{M}^+(D^2w)+|\nabla w|^{q}=\lambda_{1}f_{1}(z)&\text{in}~~\mathbb{R}^{n}\setminus B_{R_0},\\
-\mathcal{M}^+(D^2z)+|\nabla z|^{q}=\lambda_2f_{2}(w)&\text{in}~~\mathbb{R}^{n}\setminus B_{R_0}.
\end{cases}
\end{equation}
This proves the first part of the lemma.\\ 
Now we will show how does the behaviour of $\bm{(u,v)}$ near infinity determine the behavior of the above radial solution.
\begin{enumerate}
\item{} If $(u,v)$ does not blow up at infinity: In this case Lemma \ref{lem2} (a) we have $l_u=l_v=0$. Since $w\le m_u$ and $z\le m_v$, we obtain $\lim_{r\to\infty} w(r)=\lim_{r\to\infty}z(r)=0.$ To show $w'(r)\to 0$~as~$r\to \infty$, choose an arbitrary sequence 
$\{b_k\}\subset\mathbb{R}^n\backslash B_{R_0}$ with $|b_k|\to\infty$ and consider
\[
\bar{w}_k(y):=w(b_k+y)~~\text{for}~~|y|<|b_k|-R_0.
\]
Then $\bar{w}_k$ satisfies $-\mathcal{M}^{+}(D^2\bar w_k)+|\nabla\bar w_k|^{q}=\lambda_{1}f_{1}(m_v(|b_k+y|)).$ As before, $\bar w_k\to\bar w$ in $C^{1}_{\mathrm{loc}}(\mathbb{R}^{n})$  with
$- \mathcal{M}^{+}(D^2\bar w)+|\nabla \bar w|^{q}=0.$ Since $\bar w_k(0)=w(b_k)\to 0=\bar w(0)$, then from the strong maximum principle we obtain, $\bar w= 0~~\text{in}~\mathbb{R}^n$, hence $\nabla w(b_k)=\nabla w_k(0)\to 0$. Because $\{b_k\}$ was arbitrary, we proved $w(r)\to 0$ and $w'(r)\to 0$ as $r\to\infty$.  The same holds for $z$ and $z'$.
\item{} If $\bm{(u,v)}$ exhibit blow-up behavior at infinity: Again from Lemma~\ref{lem2} $(b),$ $l_u=l_v=+\infty$. Since
\[
\begin{cases}m_u(R_0)=\min\{m_u(R_0),l_u\}\le w,\\m_v(R_0)=\min\{m_v(R_0),l_v\}\le z,
\end{cases}
\]
then functions $w$ and $z$ are bounded  below. Thus, $(w,z)$ definitely exhibit blow-up behavior at infinity in view of Lemma \ref{lem2}.
\end{enumerate}
\end{proof}
\begin{remark}\label{rem:w}
The construction in the proof of Lemma \ref{lem3} yields radial functions
$w,z\in W^{2,s}_{\mathrm{loc}}(\mathbb R^n\setminus B_{R_0})$
for some $s>n.$ More precisely, the approximating solutions $\hat w_{R_{1,k}},~\hat z_{R_{1,k}}$ are uniformly bounded in $W^{2,s}_{\mathrm{loc}}(\mathbb R^n\setminus B_{R_0}),$ and converge, up to a subsequence, in $C^{1}_{\mathrm{loc}}(\mathbb R^n\setminus B_{R_0})$ to the radial supersolution $(w,z).$ At this stage, however, one cannot conclude that $w$ and $z$ are classical $C^2$-solutions. But we have these solution in $W^{2,s}.$
\end{remark}
\begin{remark}
For positive radial supersolutions $(w, z)$ which do not blow up at infinity, Lemma~\eqref{lem3} ensures that 
$w, w', z, z' \to 0~~\text{as}~~r \to +\infty.$
Hence, it is enough to assume $q<2$ in the proofs, since if $(w, z)$ supersolution of \eqref{main}  then it is also a supersolution of
\[
\begin{cases}
-\mathcal{M}^{+}(D^{2}w)+|\nabla w|^{\tilde{q}}=\lambda_{1}f_1(z), \\
-\mathcal{M}^{+}(D^{2}z)+|\nabla z|^{\tilde{q}}=\lambda_{2}f_2(w),
\end{cases}
\]
for every $q>\tilde{q}$, provided $R_1$ is large enough.
\end{remark}
\begin{lemma}\label{lem4}
Let $n\geq2$ and $q>1$ and also $q<2$ if $n=2.$ Let $(F)$ and \eqref{mueq} satisfy. 
Assume that system~\eqref{main} admits a positive supersolution $(u,v)$ which do not blow up at infinity. Then, for any choice of $\varepsilon_{1} \in (0,\mu_{1})$ and $\varepsilon_{2} \in (0,\mu_{2})$, one can find a radius $R_{1} > R_0$ and a bounded, positive radially symmetric solution $(w,z)$ of the system
\begin{equation}\label{2.8}
\begin{cases}
-\mathcal{M}^{+}(D^{2}w)+|\nabla w|^{q}=\lambda_1\varepsilon_{1} z^{p_1}&\text{in}~~\mathbb{R}^{n}\setminus B_{R_{1}},\\
-\mathcal{M}^{+}(D^{2}z)+|\nabla z|^{q}=\lambda_2\varepsilon_{2}w^{p_2}&\text{in}~~\mathbb{R}^{n}\setminus B_{R_{1}}.
\end{cases}
\end{equation}
\end{lemma}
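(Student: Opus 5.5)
We plan to follow the scheme of \cite{burgos2018lioville}: start from the radial supersolution given by Lemma \ref{lem3}, and then build an actual solution of \eqref{2.8} by sub- and supersolutions on large annuli.

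\textbf{Step 1: a radial supersolution of the power system.} By Lemma \ref{lem3}, since $(u,v)$ does not blow up at infinity, there is a positive radial supersolution $(w_0,z_0)$ of \eqref{main}. It satisfies $w_0,z_0,w_0',z_0'\to0$ as $r\to\infty$, and $w_0,z_0\in W^{2,s}_{\mathrm{loc}}$ for some $s>n$ (Remark \ref{rem:w}). Fix $\varepsilon_i<\mu_i$. By \eqref{mueq} there is $\delta>0$ with $f_i(t)\ge\varepsilon_i t^{p_i}$ for $0<t<\delta$. Choose $R_1>R_0$ so large that $w_0,z_0<\delta$ on $|x|\ge R_1$. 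Then $(w_0,z_0)$ is a bounded positive radial supersolution of \eqref{2.8} in $\mathbb{R}^n\setminus B_{R_1}$.

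\textbf{Step 2: approximating solutions on annuli.} For $R>R_1$ we solve the system \eqref{2.8} on $\mathcal A(R_1,R)$ with boundary data $w=w_0$, $z=z_0$ on $\partial\mathcal A(R_1,R)$. The system is cooperative, since the right-hand sides are nondecreasing in the other unknown. We take $(0,0)$ as subsolution and $(w_0,z_0)$ as supersolution, and produce the solution by monotone iteration. Set $(w^{(0)},z^{(0)})=(w_0,z_0)$ and let $w^{(k+1)}$ solve the scalar Dirichlet problem
\[
-\mathcal M^+(D^2w)+|\nabla w|^q=\lambda_1\varepsilon_1 (z^{(k)})^{p_1}
\]
with data $w_0$, and define $z^{(k+1)}$ analogously. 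Each scalar problem is solvable by Perron's method; its solution is radial by uniqueness and lies in $W^{2,s}_{\mathrm{loc}}$ by \cite{swiech2020pointwise}. The comparison principle of \cite{crandall1992user} shows that the sequence is nonincreasing and stays between $0$ and $(w_0,z_0)$. Passing to the limit with uniform $C^{1,\alpha}$ bounds gives a radial solution $(w_R,z_R)$ with $0\le w_R\le w_0$ and $0\le z_R\le z_0$.

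\textbf{Step 3: passing $R\to\infty$, and positivity.} Local $W^{2,s}$ estimates and Arzel\`a--Ascoli, together with a diagonal argument, give $(w_R,z_R)\to(w,z)$ in $C^1_{\mathrm{loc}}$. Stability of viscosity solutions then shows that $(w,z)$ solves \eqref{2.8}. It is bounded and radial because it lies below $(w_0,z_0)$.

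\textbf{Main obstacle.} The hard point is proving that the limit $(w,z)$ is not identically zero. We intend to keep the boundary data $w=w_0(R_1)$, $z=z_0(R_1)$ fixed on the inner sphere $|x|=R_1$ for every $R$. We then bound $(w_R,z_R)$ from below by a radial subsolution of the homogeneous problem $-\mathcal M^+(D^2\phi)+|\nabla\phi|^q\le0$ with the same inner data and zero outer data. Such subsolutions can be taken independent of $R$ near $R_1$; for example, use $\phi=c\,(r^{2-\widetilde n_+}-R^{2-\widetilde n_+})$ with small $c$, or a Hopf-lemma-type barrier when $\widetilde n_+\le2$. The hypothesis $q<2$ when $n=2$ should enter exactly here: it guarantees that an admissible barrier exists. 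With the barrier in place, $w(R_1)=w_0(R_1)>0$, so the limit is nonzero. Strict positivity everywhere then follows from the strong maximum principle.
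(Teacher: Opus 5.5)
Your architecture matches the paper's: the radial supersolution from Lemma~\ref{lem3}, reduction to the power system for $r\ge R_1$ via \eqref{mueq}, solutions on annuli trapped below $(w_0,z_0)$, and a $C^1_{\mathrm{loc}}$ limit. You rightly identify nontriviality of the limit as the crux. However, the barrier you propose for that step does not work. For $\phi=c\,(r^{2-\widetilde n_+}-R^{2-\widetilde n_+})$ with $\widetilde n_+>2$ one has $\phi'<0<\phi''$, and since $\Lambda(\widetilde n_+-1)=\lambda(n-1)$,
\[
\mathcal M^+(D^2\phi)=c\,(2-\widetilde n_+)\,r^{-\widetilde n_+}\bigl(\Lambda(1-\widetilde n_+)+\lambda(n-1)\bigr)=0,
\]
i.e.\ $\phi$ is the fundamental solution of $\mathcal M^+$. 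Hence $-\mathcal M^+(D^2\phi)+|\nabla\phi|^q=|\nabla\phi|^q>0$ for every $c>0$, however small. So $\phi$ is a strict \emph{super}solution of the homogeneous problem and cannot be placed below $w_R$, whose right-hand side $\lambda_1\varepsilon_1 z_R^{p_1}$ has no a priori positive lower bound. The ``Hopf-type barrier'' for $\widetilde n_+\le2$ is left unspecified. Moreover, even a valid interior barrier with $\phi(R_1)<w_0(R_1)$ would not yield $w(R_1)=w_0(R_1)$. That requires equicontinuity of $w_R$ at $r=R_1$ uniformly in $R$, which you do not establish.

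The paper handles this point differently. For $R_1$ large depending on $N=\sup(\bar u+\bar v)$, it uses (following Lemma 5 of \cite{Alarcon}) radial subsolutions $\underline u_{R_2},\underline v_{R_2}$ of $-\mathcal M^+(D^2w)+|\nabla w|^q\le0$ taking exactly the values of $(\bar u,\bar v)$ at $R_1$ and $R_2$. These serve two purposes. First, they are the lower barriers in the sub/supersolution method, so the Dirichlet data are attained; with only $(0,0)$ below, as in your Step 2, attainment of $w_0(R_1)$ is not automatic and needs an argument when $q>2$. Second, they squeeze the inner derivative, $0\ge\underline u_{R_2}'(R_1)\ge w_{R_2}'(R_1)\ge\bar u'(R_1)$. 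Together with Lemma~\ref{appen01} this gives $C^1$ bounds up to $r=R_1$, hence $w(R_1)=\bar u(R_1)>0$.

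Your interior-barrier route can be repaired, and it is then simpler than this boundary argument. Take $\phi_R=c\,(r^{-\gamma}-R^{-\gamma})$ with $\gamma>\max\{\widetilde n_+-2,\beta,0\}$. Then $\mathcal M^+(D^2\phi_R)=\Lambda c\gamma(\gamma+2-\widetilde n_+)r^{-\gamma-2}>0$. Since $\gamma\ge\beta$ means $q(\gamma+1)\ge\gamma+2$, this dominates $|\nabla\phi_R|^q=(c\gamma)^q r^{-q(\gamma+1)}$ on $r\ge R_1$ once $c$ is small. Provided $w_R$ attains its inner data and $cR_1^{-\gamma}\le w_0(R_1)$, comparison gives $w_R\ge\phi_R$. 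Hence $w\ge c\,r^{-\gamma}>0$ in the limit, with no boundary convergence needed. This works for all $n\ge2$ and $q>1$, so the restriction $q<2$ for $n=2$ is not what makes a barrier available.
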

\begin{proof} From the Lemma~\ref{lem3}, there exists a bounded, positive and radially symmetric supersolution $(\bar{u},\bar{v})$ of system~\eqref{main}, moreover by Lemma \ref{lem2} this supersolution also satisfies $\lim_{R\to+\infty} \bar u(R)=\lim_{R\to+\infty} \bar v(R)=0.$ Let $\varepsilon_1\in(0,\mu_{1})$ and $\varepsilon_2\in(0,\mu_{2})$. In view of \eqref{mueq}, and the choice of $\varepsilon_1,\varepsilon_2$ there exists $\tilde{R_0}>R_0$ such that 
\[f_{1}(v(R))\geq \varepsilon_1 v(R)^{p_1},~~f_{2}(u(R))\geq \varepsilon_{2} u(R)^{p_2},~~\text{for}~~R>\tilde{R}_0.\]
It shows that $(\bar{u},\bar{v})$ is a supersolution of system~\eqref{2.8} in $\mathbb{R}^{n}\setminus B_{R_1}$. Let us set 
\[N:=\sup_{R\ge 2R_0} \left(\bar u(R)+\bar v(R)\right).\]
 By following the similar idea as in Lemma 5 \cite{Alarcon}, there exists an $\tilde{\tilde{R}}>\tilde{R}_{0}$ (depending on $N$) such that following holds: For every $R_1,R_2>\tilde{\tilde{R}}$ satisfying $R_{2}>2R_{1},$ there exists a radially symmetric, positive functions $\underline{u}_{R_2}$ and $\underline{v}_{R_2}$ satisfying
\[-\mathcal{M}^+(D^2w)+|\nabla w|^q\le 0~~\text{in}~~\mathcal{A}(R_1,R_2),
\]
with boundary conditions
\[
\begin{cases}
\underline{u}_{R_2}(R_1)=\bar u(R_1),& \underline{u}_{R_2}(R_2)=\bar u(R_2),\\
\underline{v}_{R_2}(R_1)=\bar v(R_1),& \underline{v}_{R_2}(R_2)=\bar v(R_2).
\end{cases}
\]
As $\bar u(R_1)>\bar u(R_2)$, the maximum principle implies
\[\underline{u}_{R_2}\leq\bar{u}(R_1)\quad\text{in}~~\mathcal{A}(R_1,R_2),\quad \text{so}~~\underline{u}'_{R_2}(R_1)\leq 0.\]
Similarly, $\underline{v}'_{R_2}(R_1)\le 0.$  Fix $R_1>\tilde{\tilde{R}}_{0}$ and choose $R_2$ such that $R_2>2R_1$. Then the comparison principle implies:
\[\underline{u}_{R_2}\leq\bar{u},\quad \underline{v}_{R_2}\le\bar{v}\quad\text{in}~~\mathcal{A}(R_1,R_2).\]
Therefore, the result follows by applying the sub- and supersolution method to construct a radially symmetric solution 
$(z_{R_2},w_{R_2})$ of
\begin{equation}\label{2.9}
\begin{cases}
-\mathcal{M}^+(D^2w)+|\nabla w|^q&=\lambda_1 \varepsilon_1 z^{p_1}\quad\text{in}~~\mathcal{A}(R_1,R_2)\\
-\mathcal{M}^+(D^2z)+|\nabla z|^q&=\lambda_2 \varepsilon_2 w^{p_2}\quad\text{in}~~\mathcal{A}(R_1,R_2)\\
\hspace{2.2cm}(w,z)&=(\bar{u},\bar{v})\quad\text{on}~~\partial \mathcal{A}(R_1,R_2).
\end{cases}
\end{equation}
 Here our sub and supersolution approach is similar to Lemma 14\cite{burgos2018lioville}. Since $0\leq w_{R_2}\leq\bar u$ and $0\leq z_{R_2}\leq\bar v$, it follows that the families $\{w_{R_2}\}$ and $\{z_{R_2}\}$ are uniformly bounded in $L^{\infty}$. Next, we derive locally bounds for the gradient estimates. Using Lemma \ref{appen01}, for any $R>2R_1$ there exists $C>0$ such that
\[|w'_{R_2}(r)|,\quad|z'_{R_2}(r)|\le C,
\quad \text{for}~~r\in[2R_1,R],\]
where $R_2>R.$ Since $\underline{u}_{R_2}(R_1)=w_{R_2}(R_1)=\bar u(R_1),$ we have
\[0\ge\underline{u}_{R_2}'(R_1)\ge w_{R_2}'(R_1)\ge \bar u'(R_1),\]
which provides uniform control of the derivatives of $w_{R_2}'(R_1)$ and $z_{R_2}'(R_1)$ at $R_1$. Hence, again in view of Lemma \ref{appen01}, we get
\[|w'_{R_2}(r)|\le C,\quad|z'_{R_2}(r)|\leq C\quad\text{for}~~r\in[R_1,2R_1].\]
Thus by passing the limit we get, for a subsequence $R_2\to+\infty$
\[
\left\{
\begin{aligned}
\lim_{R_2\to+\infty}w_{R_2}=w~~~\text{in}~~C^1_{\mathrm{loc}}[R_1,+\infty),\\ \lim_{R_2\to+\infty}z_{R_2}=z~~~\text{in}~~C^1_{\mathrm{loc}}[R_1,+\infty)
\end{aligned}
\right.\]
where $(w,z)$ is a nonnegative, radial solution of system~\eqref{2.9} in $\mathbb{R}^n\setminus \overline{B}_{R_1}$.
Since the convergence holds up to $\partial B_{R_1}$, we have
$w(R_1)=\bar u(R_1),~~z(R_1)=\bar v(R_1),$ which gives that $(w,z)$ is nontrivial. By strong maximum principle, $(w,z)$ is strictly positive. This completes the proof.
\end{proof}
\section{lower estimates}\label{lower}
In this section we establish lower bound for the positive, bounded, radially symmetric supersolution of the equation $- \mathcal{M}^+(D^2w)+|\nabla w|^{q}.$ As we will be dealing with radial solution of the considered operator, it is useful to recall the following radial version of Pucci's extremal operator
\[M^{+}_{\lambda,\Lambda}\left(D^2 \tilde{u}\right)(r)=\theta\left(\tilde{u}^{\prime \prime}(r)\right)\tilde{u}^{\prime \prime}(r)+\theta\left(\tilde{u}^{\prime}(r)\right)(n-1)\frac{\tilde{u}^{\prime}(r)}{r},
\]
where $\tilde{u}$ is a radial function. Thus, if $w\in C^2(R_0,+\infty)$ is a radial solution of $-\mathcal{M}^+(D^2(\cdot))+|D(\cdot)|^{q}>0,$ then 
\[-\theta\big(w^{\prime\prime}(r)\big)w^{\prime\prime}(r)
-\theta\big(w^{\prime}(r)\big)(n-1)\frac{w^{\prime}(r)}{r}+|w^{\prime}|^q>0~~\text{in}~~(R_0,+\infty).
\]
First observe that $w$ can not be constant. Then it is easy to see that if $w'\le 0,$ then necessarily $w'<0$. In fact, if $w'$ vanishes at some $r_0>R_0$, then above equation implies $w^{\prime\prime}(r_0)<0.$ This means that $w$ is non-increasing function which has maximum at some point $r_0.$ This is only possible if $w$ is constant, which is not the case. Thus $w^{\prime}<0.$
\begin{lemma}\label{lem5}
Suppose $q>1$ and $w\in W^{2,\infty}_{\mathrm{loc}}\big(R_0,+\infty\big)
\cap C^{1}\big(R_0,+\infty\big)$ be a positive function satisfying
\begin{equation}\label{5.1lem5}
\left\{
\begin{aligned}
&-\theta\left(w''(r)\right)w''(r)-\theta\left(w'(r)\right)(n-1)\frac{w'(r)}{r}+|w'(r)|^{q}\ge0~~\text{a.e. in }(R_0,+\infty),\\
&\text{with}~~\lim_{r\to+\infty}w(r)=0,~~w'(r)<0.
\end{aligned}
\right.
\end{equation}
Assume also that $\widetilde n_{+}>2$. There exists a positive constant $C$ such that, for all $r\ge R_0$ and every $1<q<q_c$,
\[
|w'(r)|\ge Cr^{-\frac{1}{q-1}},~~
w(r)\ge Cr^{-\frac{2-q}{q-1}}.
\]
\end{lemma}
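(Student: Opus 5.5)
The plan is to reduce the radial inequality \eqref{5.1lem5} to a first-order differential inequality for $s:=-w'$, then linearize it by a Bernoulli-type substitution and integrate. Since $w'<0$, we have $\theta(w')=\lambda$ and $s>0$. Because $w\in W^{2,\infty}_{\mathrm{loc}}\cap C^1$, $s$ is locally Lipschitz, hence absolutely continuous, with $s'=-w''$ a.e. Substituting into \eqref{5.1lem5} gives
\[
\theta(-s'(r))\,s'(r)+\lambda(n-1)\frac{s(r)}{r}+s(r)^q\ge 0 \quad\text{a.e. in } (R_0,+\infty).
\]

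The first step is a case split at a.e. point $r$, according to the sign of $s'(r)$. Where $s'(r)<0$ we have $\theta(-s')=\Lambda$. Dividing by $\Lambda$ and using $\lambda(n-1)/\Lambda=\widetilde n_+-1$ gives
\[
s'\ge -(\widetilde n_+-1)\frac{s}{r}-\frac{s^q}{\Lambda}.
\]
Where $s'(r)\ge 0$, the same inequality holds trivially, because its right-hand side is negative. So this inequality holds a.e. in $(R_0,+\infty)$. This sign-based splitting is the only place where the nonlinearity of $\mathcal M^+$ enters, and it uses only a.e. information, which is exactly what the $W^{2,\infty}_{\mathrm{loc}}$ framework provides.

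Next set $y:=s^{1-q}$. It is again locally absolutely continuous, since $s$ is positive and continuous. By the chain rule for absolutely continuous functions,
\[
y'=(1-q)s^{-q}s'\le (q-1)(\widetilde n_+-1)\frac{y}{r}+\frac{q-1}{\Lambda}\quad\text{a.e.}
\]
Put $a:=(q-1)(\widetilde n_+-1)$. This gives $\big(r^{-a}y\big)'\le \frac{q-1}{\Lambda}r^{-a}$ a.e. Integrating from a fixed $r_1>R_0$ to $r$ yields
\[
r^{-a}y(r)\le r_1^{-a}y(r_1)+\frac{q-1}{\Lambda(1-a)}\big(r^{1-a}-r_1^{1-a}\big).
\]
The key point is that $q<q_c$ is equivalent to $a<1$. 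Hence the integral term grows like $r^{1-a}$, which dominates the constant term $r_1^{-a}y(r_1)$ for large $r$. Therefore $y(r)\le Cr$ for $r\ge r_1$, and by continuity the same bound holds on compact pieces near $R_0$ after adjusting $C$. Translating back, $|w'(r)|=s(r)=y(r)^{-1/(q-1)}\ge Cr^{-\frac{1}{q-1}}$.

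For the bound on $w$, use $w(r)\to0$ and the absolute continuity of $w$ to write $w(r)=\int_r^\infty s(t)\,dt\ge C\int_r^\infty t^{-\frac1{q-1}}\,dt$. This integral converges, and equals a constant times $r^{-\frac{2-q}{q-1}}$, provided $\frac{1}{q-1}>1$, i.e.\ $q<2$. That holds here because $\widetilde n_+>2$ gives $q<q_c=\frac{\widetilde n_+}{\widetilde n_+-1}<2$.

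I expect the main obstacle to be the justification of the a.e. manipulations rather than the computation itself. One must check that the case split on the sign of $s'$ is legitimate pointwise a.e., and that the chain and product rules apply to $y$ and $r^{-a}y$ so that integration is valid. Both follow because $s$ is locally Lipschitz and bounded away from zero on compact sets. The second delicate point is to track where $q<q_c$ and $\widetilde n_+>2$ are used, namely $a<1$ and $q<2$ respectively.
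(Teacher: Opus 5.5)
Your proposal is correct and follows the paper's proof essentially line by line. The shared steps are the substitution $v=-w'$, the sign case split yielding $v'+\frac{\widetilde n_{+}-1}{r}v\ge-\frac{1}{\Lambda}v^{q}$ (which the paper packages as its Structural Inversion Lemma), the Bernoulli variable $Y=v^{1-q}$ with integrating factor $r^{-\gamma}$ where $\gamma=(q-1)(\widetilde n_{+}-1)<1$, and the final integration of $-w'$ over $(r,\infty)$. Your observation that $\widetilde n_{+}>2$ forces $q<q_c<2$, which makes $\int_r^\infty t^{-1/(q-1)}\,dt$ converge, is used only implicitly in the paper. As in the paper, which integrates from $R_0$ using $Y(R_0)$, extending the bound down to $r=R_0$ tacitly requires $w'$ to extend continuously to $R_0$ with $w'(R_0)<0$.
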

Before proving  the lemma \ref{lem5}, we introduce a structural inversion lemma that will be used in the proof of lemma \ref{lem5} .
\begin{lemma}\label{inversion lemma}
Let $I\subset\mathbb R$ be an interval and let $\xi\in W^{2,s}_{\mathrm{loc}}(I)$ for some $s\geq1$.
Suppose there exists a function
$f\in L^{s}_{\mathrm{loc}}(I)$
such that
\[
\theta(\xi''(r))\,\xi''(r)\le f(r),~~
\text{for a.e.}~~r\in I,
\]
where $\theta$ is defined by \eqref{theta12}. Then $\xi''(r)\le\frac{f(r)}{\Lambda},~~\text{for a.e.}~~r\in I.$
\end{lemma}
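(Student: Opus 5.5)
The proof is a pointwise case analysis carried out on a common set of full measure. Since $\xi\in W^{2,s}_{\mathrm{loc}}(I)$, we fix a representative of the weak second derivative $\xi''\in L^{s}_{\mathrm{loc}}(I)$ and a representative of $f$. Let $N\subset I$ be the null set off which the hypothesis $\theta(\xi''(r))\xi''(r)\le f(r)$ holds. The function $\theta$ is Borel, so $\theta(\xi'')\xi''$ is measurable and the hypothesis makes sense. Changing representatives only alters things on a null set, so it is enough to prove $\xi''(r)\le f(r)/\Lambda$ for every $r\in I\setminus N$. The only structural facts used are the definition \eqref{theta12} and the standing assumption $0<\lambda<\Lambda$.

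Fix $r\in I\setminus N$. There are three cases.

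\emph{Case $\xi''(r)\ge 0$.} Here $\theta(\xi''(r))=\Lambda$, so the hypothesis reads $\Lambda\xi''(r)\le f(r)$. Dividing by $\Lambda>0$ gives the claim.

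\emph{Case $\xi''(r)<0$ and $f(r)\ge0$.} Then $\xi''(r)<0\le f(r)/\Lambda$ trivially.

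\emph{Case $\xi''(r)<0$ and $f(r)<0$.} Here $\theta(\xi''(r))=\lambda$, so the hypothesis gives $\lambda\xi''(r)\le f(r)$, that is, $\xi''(r)\le f(r)/\lambda$. Since $f(r)<0$ and $1/\lambda>1/\Lambda$, we have $f(r)/\lambda\le f(r)/\Lambda$. Chaining the two inequalities gives $\xi''(r)\le f(r)/\Lambda$.

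The three cases exhaust $I\setminus N$, which proves the lemma. The only step needing care is the last case: there the inequality on the negative part must be \emph{weakened} from $f/\lambda$ to $f/\Lambda$, and this uses the sign of $f$ together with $\lambda<\Lambda$. No integrability of $f$ beyond measurability is actually needed. The hypothesis $f\in L^s_{\mathrm{loc}}$ only ensures that the conclusion can later be integrated, for instance to compare $\xi'$ with a primitive of $f/\Lambda$ in the proof of Lemma \ref{lem5}.
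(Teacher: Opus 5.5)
Your proof is correct and follows the paper's argument: a pointwise case split on the sign of $\xi''(r)$ on a full-measure set, with $\theta=\Lambda$ when $\xi''\ge 0$ and $\theta=\lambda$ when $\xi''<0$. Your extra split by the sign of $f(r)$ is harmless but unnecessary, since the paper notes directly that $\xi''(r)<0$ and $\lambda\le\Lambda$ give $\Lambda\xi''(r)\le\lambda\xi''(r)\le f(r)$.
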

\begin{proof}
Since \[\theta(\xi'')=
\begin{cases}
\Lambda,& \xi''\geq0,\\
\lambda,& \xi''<0,
\end{cases}\] we distinguish two cases.\\
\begin{itemize}
\item[\textbf{Case 1.}] Suppose that $\xi''(r)\ge0$. Then $\theta(\xi''(r))=\Lambda,$
and therefore $\Lambda\xi''(r)\le f(r).$ Since $\Lambda>0$, dividing by $\Lambda$ gives $\xi''(r)\le\frac{f(r)}{\Lambda}.$\\
\item[\textbf{Case 2.}] Suppose that $\xi''(r)<0$. Then $\theta(\xi''(r))=\lambda,$ and hence
$\lambda\xi''(r)\le f(r).$ Since $\lambda\le\Lambda$ and $\xi''(r)<0$, we have $\Lambda\xi''(r)\le\lambda\xi''(r).$ Consequently,
\[
\Lambda\xi''(r)\leq\lambda\xi''(r)\leq f(r),
\]
which immediately implies $\xi''(r)\le\frac{f(r)}{\Lambda}.$ Since one of the above two cases holds almost everywhere in $I$, we conclude that
\[
\xi''(r)\le\frac{f(r)}{\Lambda},~~\text{for almost every}~~r\in I.
\]
This completes the proof.
\end{itemize}
\end{proof}
\begin{proof}[\textbf{Proof of Lemma \ref{lem5}}]
Define $v(r):=-w'(r),~~ r>R_0.$ Since $w\in W^{2,\infty}_{\mathrm{loc}}(R_0,\infty)$ and $w'(r)<0$, we have $v(r)>0$ for every $r>R_0$. Also,
$v\in W^{1,\infty}_{\mathrm{loc}}(R_0,\infty).$ Moreover,
the differential inequality satisfied by $w$ is
\[
-\theta(w'')w''-\lambda(n-1)\frac{w'}{r}+|w'|^{q}\ge0.
\]

Since $v=-w',~~v'=-w'',$ we obtain $\theta(-v')v'+\lambda(n-1)\frac{v}{r}+v^{q}\ge0.$ Hence,
\[
\theta(-v')v'\ge-\lambda(n-1)\frac{v}{r}-v^{q}.
\]
Applying the Structural Inversion Lemma \ref{inversion lemma} with $\xi=v,$ we obtain
$v'\ge-\frac{\lambda(n-1)}{\Lambda}\frac{v}{r}
-\frac{1}{\Lambda}v^{q}~~\text{for a.e.}~~r>R_0.$ Equivalently
\begin{equation}\label{5.2lem5}
v'(r)+\frac{\widetilde n_{+}-1}{r}v(r)
\ge-\frac1{\Lambda}v(r)^q,~~\text{for a.e.}~~r>R_0.
\end{equation}
Since $q>1$, define $Y(r):=v(r)^{1-q}.$ Because $1-q<0$, the chain rule gives
\[
Y'(r)=(1-q)v(r)^{-q}v'(r)=-(q-1)v(r)^{-q}v'(r)
\]
for almost every $r>R_0$. From \eqref{5.2lem5},
$v'(r)\ge-\frac{\widetilde n_{+}-1}{r}v(r)-\frac1{\Lambda}v(r)^q.$ Multiplying both sides by the negative quantity
$-(q-1)v(r)^{-q}$ reverses the inequality, yielding
\[
Y'(r)\le(q-1)\frac{\widetilde n_{+}-1}{r}v(r)^{1-q}+\frac{q-1}{\Lambda}.
\]
Since $Y=v^{1-q},$ we obtain
\begin{equation}\label{5.3lem5}
Y'(r)\le\frac{\gamma}{r}Y(r)+\frac{q-1}{\Lambda},
\end{equation}
where $\gamma=(q-1)(\widetilde n_{+}-1).$ Since $q<q_c=\frac{\widetilde n_{+}}{\widetilde n_{+}-1},$ we have
$\gamma=(q-1)(\widetilde n_{+}-1)<1.$ Multiplying \eqref{5.3lem5} by the integrating factor
$r^{-\gamma}$
gives
\[
\left(r^{-\gamma}Y(r)\right)'\le\frac{q-1}{\Lambda}r^{-\gamma}.
\]
Integrating over $[R_0,r]$ yields
\[
r^{-\gamma}Y(r)-R_0^{-\gamma}Y(R_0)\le\frac{q-1}{\Lambda}
\int_{R_0}^{r}s^{-\gamma}ds.
\]
Since $\gamma<1$,
\[
\int_{R_0}^{r}s^{-\gamma}ds=\frac{r^{1-\gamma}-R_0^{\,1-\gamma}}{1-\gamma}.
\]
Therefore,
\[
r^{-\gamma}Y(r)\leq R_0^{-\gamma}Y(R_0)+
\frac{q-1}{\Lambda(1-\gamma)}\left(r^{1-\gamma}-R_0^{\,1-\gamma}\right).
\]
Multiplying both sides by $r^\gamma$, we obtain
\[
Y(r)\leq R_0^{-\gamma}Y(R_0)r^\gamma+\frac{q-1}{\Lambda(1-\gamma)}
\left(r-r^\gamma R_0^{\,1-\gamma}\right).
\]
Since $\gamma<1$, we have $r^\gamma\le r$ for every
$r\ge R_0$. Hence there exists a constant
$C_1>0$, depending only on
$R_0,\Lambda,q$ and $Y(R_0)$, such that $Y(r)\le C_1r,~~\text{for}~~r\ge R_0.$ Recalling that $Y(r)=v(r)^{1-q},$ we obtain $v(r)^{1-q}\le C_1r.$
Since $1-q<0$, raising both sides to the power
$-\frac{1}{q-1}$ gives $v(r)\geq Cr^{-\frac1{q-1}},$
where $C>0.$ Since $v=-w'$, this proves
\[
|w'(r)|=-w'(r)\geq Cr^{-\frac1{q-1}},~~r\ge R_0.
\]
It remains to estimate $w$. Since
$\lim_{r\to+\infty}w(r)=0$ and $w$ is absolutely continuous on every bounded interval, the Fundamental Theorem of Calculus gives
\[
w(r)=\int_r^\infty\bigl(-w'(t)\bigr)dt=\int_r^\infty v(t)dt\ge C\int_r^{\infty}t^{-\frac{1}{q-1}}dt =Cr^{-\frac{2-q}{q-1}},
\]
since $v(t)\ge Ct^{-\frac{1}{q-1}}.$ 
\end{proof}
\begin{lemma}\label{lem6}
Suppose that $q>1$, $C>0$ and $u$ is a positive function satisfying
\begin{equation}\label{3.4}
-\mathcal{M}^+(D^2u)+|\nabla u|^{q}\ge C|x|^{-\gamma-2}~~\text{in}~~\mathbb{R}^n\setminus B_{R_0},
\end{equation}
for some $\gamma$ in the range $\beta\le\gamma<n-2$. Then there exists
$C'>0$ such that
\[u(x)\ge C'|x|^{-\gamma}~~\text{for all }~~|x|>R_0.\]
\end{lemma}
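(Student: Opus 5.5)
The plan is to build an explicit radial subsolution of the form $\phi(x)=\varepsilon\big(|x|^{-\gamma}-R_1^{\,\gamma'}|x|^{-\gamma-\gamma'}\big)$, or more simply $\phi(x)=\varepsilon\big(|x|^{-\gamma}-R^{-\gamma}\big)$ truncated on annuli, and compare it with $u$ using the comparison principle for $-\mathcal M^+(D^2\cdot)+|\nabla\cdot|^q$ (Theorem 3.3 in \cite{crandall1992user}, as used in Lemma \ref{lem3}). The argument should exploit both ends of the range. The condition $\gamma<n-2$, or more precisely $\gamma<\widetilde n_+-2$, makes the Pucci part of $|x|^{-\gamma}$ strictly positive in the correct direction. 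The condition $\gamma\ge\beta$ makes the gradient term decay at least as fast as $|x|^{-\gamma-2}$.

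\textbf{Step 1: the radial computation.} For $\phi_0(r)=r^{-\gamma}$ we have $\phi_0'=-\gamma r^{-\gamma-1}<0$ and $\phi_0''=\gamma(\gamma+1)r^{-\gamma-2}>0$. By the radial formula for $\mathcal M^+$,
\[
-\mathcal M^+(D^2\phi_0)=-\gamma\big(\Lambda(\gamma+1)-\lambda(n-1)\big)r^{-\gamma-2}
=\Lambda\gamma\big(\widetilde n_+-2-\gamma\big)r^{-\gamma-2}.
\]
This quantity is positive. Since we want a subsolution, I take $\phi=\varepsilon(\phi_0-R^{-\gamma})$ on $\mathcal A(R_0,R)$ and note the following.
\begin{itemize}
\item[(i)] The gradient term equals $\varepsilon^q\gamma^q r^{-(\gamma+1)q}$. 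Because $\gamma\ge\beta$ is equivalent to $(\gamma+1)q\ge\gamma+2$, this is bounded by $\varepsilon^q\gamma^q R_0^{\,\gamma+2-(\gamma+1)q}\,r^{-\gamma-2}$.
\item[(ii)] Hence $-\mathcal M^+(D^2\phi)+|\nabla\phi|^q\le \big(\varepsilon\Lambda\gamma(\widetilde n_+-2-\gamma)+C\varepsilon^q\big)r^{-\gamma-2}$. Choosing $\varepsilon$ small, this is $\le C|x|^{-\gamma-2}\le -\mathcal M^+(D^2u)+|\nabla u|^q$.
\end{itemize}
If the hypothesis is only $\gamma<n-2$ rather than $\gamma<\widetilde n_+-2$, I would still proceed the same way. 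The sign of the Pucci term does not matter for this inequality, since it only needs an upper bound of order $\varepsilon r^{-\gamma-2}$, which holds for any $\gamma>0$. So the upper restriction is really needed only to keep $\phi_0$ decaying (in Step 3).

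\textbf{Step 2: boundary comparison.} On $|x|=R_0'$ (slightly larger than $R_0$, to have $u$ continuous and positive on the boundary) we need $\phi\le u$. This holds once $\varepsilon R_0'^{-\gamma}\le \min_{|x|=R_0'}u$, which further shrinks $\varepsilon$ independently of $R$. On $|x|=R$ we have $\phi=0<u$. The comparison principle on $\mathcal A(R_0',R)$ then gives $u\ge\varepsilon(|x|^{-\gamma}-R^{-\gamma})$.

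\textbf{Step 3: conclusion.} Letting $R\to\infty$ with $\varepsilon$ fixed yields $u\ge\varepsilon|x|^{-\gamma}$ for $|x|>R_0'$. On the compact shell between $R_0$ and $R_0'$, positivity of $u$ gives the bound after adjusting $C'$; alternatively, take $R_0'\downarrow R_0$ with $u$ continuous up to the boundary.

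\textbf{Main obstacle.} The delicate point is the comparison principle. The operator with the gradient term $|\nabla\cdot|^q$, $q>1$, is not Lipschitz in the gradient, and comparison is between a viscosity supersolution $u$ and the smooth strict-type subsolution $\phi$. I would handle this by making $\phi$ a strict subsolution: use $\varepsilon/2$ margin so the inequality is strict. Then at an interior minimum of $u-\phi$ (if $u<\phi$ somewhere) the test-function definition contradicts strictness directly, with no general comparison theorem needed.
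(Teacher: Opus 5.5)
Your argument is correct. It uses the same ingredients as the paper's proof. The first is the radial barrier $A|x|^{-\gamma}+B$, which is admissible for small $A$ by exactly your Step 1 computation: the paper's threshold $\overline A$ is defined by setting your coefficient $\gamma A\bigl(\lambda(n-1)-\Lambda(\gamma+1)\bigr)+\gamma^qA^qR_0^{\gamma+2-(\gamma+1)q}$ equal to $C$. The second is comparison on annuli. The difference is that you organize these directly, while the paper argues by contradiction.

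The paper assumes $\liminf_{R\to\infty}m_u(R)R^{\gamma}=0$ and picks $R_{k_0}$ with $m_u(R_{k_0})R_{k_0}^{\gamma}<\overline A$. It then compares $u$ on $\mathcal A(R_{k_0},R_k)$ with the barrier $\xi$ matching $m_u(R_{k_0})$ and $m_u(R_k)$ on the two spheres. Letting $k\to\infty$ gives $m_u(R)R^{\gamma}\ge m_u(R_{k_0})R_{k_0}^{\gamma}$, which contradicts the choice of $R_k$. The contradiction hypothesis is what supplies a sphere on which an admissible, small amplitude fits under $u$.

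You instead fix the amplitude $\varepsilon$ small from the start, using $\min_{|x|=R_0'}u>0$ on the inner sphere and only $u>0$ on the outer one. This removes the sequence $R_k$ and the need for $m_u(R_k)\to0$, and it gives the bound on all of $\{|x|>R_0'\}$ at once. Your touching argument against a strict classical subsolution also makes the comparison step self-contained. The paper just invokes ``the comparison principle'' for an operator with the non-Lipschitz term $|\nabla u|^q$. The same touching argument would justify the paper's step too, since its amplitude is strictly below $\overline A$ for large $k$.

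One correction to your side remark. You are right that $\gamma<n-2$ (or $\gamma<\widetilde n_+-2$) is never used. However, it is not what keeps $|x|^{-\gamma}$ decaying: that requires only $\gamma>0$. The condition $\gamma>0$ is what your proof, like the paper's, tacitly needs. It is automatic when $q<2$ (then $\beta>0$), and it holds in the paper's applications, where $\gamma=\bar\alpha_2>0$. For $q\ge2$, though, the hypothesis $\beta\le\gamma$ allows $\gamma\le0$. In that case $\varepsilon(|x|^{-\gamma}-R^{-\gamma})\le0$ on the annulus and the argument gives nothing.
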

\begin{proof} As $\beta\leq \gamma<n-2,$ we can choose $A_1$ sufficiently small so that $\Psi(x)=A_1|x|^{-\gamma}+A_2$ becomes a subsolution. In fact any $0<A_{1}\leq \overline{A},$ where 
\[
\gamma\overline{A}\Big[\lambda (n-1)-\Lambda(\gamma+1)\Big]
+\gamma^q\overline{A}^q R_0^{-q(\gamma+1)+\gamma+2}=C.
\]
will work. Now observe that in order to prove the conclusive inequality, it is sufficient to show that $\liminf_{|x|\to +\infty} m_u(|x|)\,|x|^{\gamma}>0.$ Now suppose by contradiction it does not hold, so we can find a sequence $(R_k)$ with $R_k \to +\infty$ such that $m_u(R_k)R_k^{\gamma}\to 0.$ Choose $k_0$ such that $m_u(R_{k_0})R_{k_0}^{\gamma}<\overline{A}.$ Define a function
\[\xi(x)=\frac{m_u(R_{k_0})-m_u(R_k)}{R_{k_0}^{-\gamma}-R_k^{-\gamma}}\left(|x|^{-\gamma} -R_k^{-\gamma}\right)+ m_u(R_k).\]
Notice that for sufficiently large $k,$
\begin{equation}\label{supp}
\begin{cases}
-\mathcal{M}^+(D^2\xi)+|D\xi|^{q}\leq C|x|^{-\gamma-2}&\text{in}~~\mathcal{A}(R_{n_0}, r_k),\\
\xi=m_u&\text{on}~~\partial\mathcal{A}(R_{k_0}, r_k).
\end{cases}.
\end{equation}
By the comparison principle, we obtain $\xi \le m_u~\text{in}~~\mathcal{A}(R_{k_0}, R_k).$ Letting, $k \to +\infty$ we get
\[m_u(|x|)|x|^{\gamma}\ge m_u(R_{n_0})R_{n_0}^{\gamma}~~\text{in}~~|x|>R_{n_0}.
\]
Finally choosing $x_j$ satisfying $|x_j|=R_j$ and $u(x_j)=m_u(|x_j|)$ and taking limit $j \to+\infty$ we get a contradiction. 
Therefore, $\liminf_{R \to +\infty} m_u(R)\, R^{\gamma} > 0.$  Which gives $u(x) \ge C' |x|^{-\gamma}.$
\end{proof}
Our next lemma is concerned with the case $\gamma\leq\beta.$
\begin{lemma}\label{lem7}
Assume that $1<q<2$, $C>0$ and $u$ is a positive function satisfying
\begin{equation}\label{3.5}
-\mathcal{M}^+(D^2u)+|\nabla u|^{q}\ge C|x|^{-(\gamma+1)q}~~\text{in}~~\mathbb{R}^n \setminus B_{R_0},
\end{equation}
with $\gamma\leq\beta$. Then there exists $C'>0$ such that 
\[
u(x)\ge C'|x|^{-\gamma}~~\text{for all} ~~|x| > R_0.
\]
\end{lemma}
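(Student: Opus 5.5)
This will be proved in the same way as Lemma~\ref{lem6}: build an explicit radial barrier $A|x|^{-\gamma}$ that is a subsolution of the equation with right-hand side $C|x|^{-(\gamma+1)q}$, and compare it with $m_u$ on annuli. The difference from Lemma~\ref{lem6} is the regime. Since $\gamma\le\beta=\frac{2-q}{q-1}$, we have $(\gamma+1)q\le\gamma+2$, so the gradient term, not the second-order term, matches the forcing $|x|^{-(\gamma+1)q}$. Also $\gamma<n-2$ need not hold, so the sign of the Pucci term cannot be used and must instead be dominated.

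\textbf{Step 1: the barrier.} Let $\Psi(x)=A|x|^{-\gamma}$ with $r=|x|$. Then $\Psi'=-A\gamma r^{-\gamma-1}<0$ and $\Psi''=A\gamma(\gamma+1)r^{-\gamma-2}>0$, so
\[
-\mathcal M^+(D^2\Psi)+|\nabla\Psi|^q=A\gamma\bigl(\lambda(n-1)-\Lambda(\gamma+1)\bigr)r^{-\gamma-2}+A^q\gamma^q r^{-(\gamma+1)q}.
\]
Because $r^{-\gamma-2}\le R_0^{(\gamma+1)q-\gamma-2}r^{-(\gamma+1)q}$ for $r\ge R_0$, the right-hand side is at most
\[
\Bigl(A\gamma\,\bigl|\lambda(n-1)-\Lambda(\gamma+1)\bigr|\,R_0^{(\gamma+1)q-\gamma-2}+A^q\gamma^q\Bigr)r^{-(\gamma+1)q}.
\]
Choose $\overline A>0$ small enough that this bracket is at most $C$. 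Then every $A\in(0,\overline A]$ gives a subsolution in $\mathbb R^n\setminus B_{R_0}$. If $\gamma\le0$ the claim is immediate: $u$ is positive and, by Remark~\ref{monotone}, $m_u$ is eventually monotone. So I may assume $\gamma>0$.

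\textbf{Step 2: contradiction and comparison.} By Remark~\ref{monotone}, $m_u$ is eventually nonincreasing, and it suffices to show $\liminf_{R\to\infty}m_u(R)R^\gamma>0$. Suppose instead there are $R_k\to\infty$ with $m_u(R_k)R_k^\gamma\to0$. Fix $k_0$ with $m_u(R_{k_0})R_{k_0}^\gamma<\overline A$, and define
\[
\xi_k(x)=a_k\bigl(|x|^{-\gamma}-R_k^{-\gamma}\bigr)+m_u(R_k),\qquad a_k=\frac{m_u(R_{k_0})-m_u(R_k)}{R_{k_0}^{-\gamma}-R_k^{-\gamma}}.
\]
By construction $\xi_k=m_u\le u$ on $\partial\mathcal A(R_{k_0},R_k)$. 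Since $a_k\to m_u(R_{k_0})R_{k_0}^\gamma<\overline A$, we have $0<a_k\le\overline A$ for large $k$. Adding the constant $m_u(R_k)-a_kR_k^{-\gamma}$ does not change derivatives, so Step~1 shows that $\xi_k$ is a subsolution with right-hand side $C|x|^{-(\gamma+1)q}$. The comparison principle for $-\mathcal M^++|\nabla\cdot|^q$ (Theorem 3.3 in \cite{crandall1992user}), applied on the annulus, gives $\xi_k\le u$ there. Letting $k\to\infty$ yields $u(x)\ge m_u(R_{k_0})R_{k_0}^\gamma|x|^{-\gamma}$ for $|x|>R_{k_0}$. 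Evaluating at the points $x_j$ with $|x_j|=R_j$ and $u(x_j)=m_u(R_j)$ contradicts $m_u(R_j)R_j^\gamma\to0$. On the compact annulus $\overline{B}_{R_{k_0}}\setminus B_{R_0}$ the bound holds after shrinking $C'$, because $u$ is positive and continuous there.

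\textbf{Main obstacle.} The delicate point is the comparison itself, since the right-hand side $C|x|^{-(\gamma+1)q}$ depends only on $x$. It is needed in the form "subsolution $\le$ supersolution with matching boundary data on a bounded annulus" for a proper, $x$-dependent, gradient-dependent operator, and this is exactly the setting of \cite{crandall1992user} (both functions are continuous, $u$ is a viscosity supersolution). A second point is the passage from the uniform smallness of the coefficient $a_k$ to the subsolution property. This works only because $(\gamma+1)q\le\gamma+2$ allows the $r^{-\gamma-2}$ term to be absorbed, and this is precisely where the hypothesis $\gamma\le\beta$ is used.
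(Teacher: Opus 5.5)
Your argument for $\gamma>0$ is the paper's argument: the barrier $A|x|^{-\gamma}$ plus a constant, the contradiction sequence with $m_u(R_k)R_k^{\gamma}\to 0$, and comparison on $\mathcal A(R_{k_0},R_k)$, exactly as in Lemma~\ref{lem6}. Your choice of $\overline{A}$ is more careful than the paper's. You absorb $r^{-\gamma-2}\le R_0^{(\gamma+1)q-\gamma-2}r^{-(\gamma+1)q}$ and use $|\lambda(n-1)-\Lambda(\gamma+1)|$. The paper's displayed equation for $\overline{A}$ instead combines the two coefficients as if the two powers of $r$ were equal, which is true only when $\gamma=\beta$.

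The one real error is your claim that the case $\gamma\le 0$ is immediate.
\begin{itemize}
\item For $\gamma<0$ the conclusion reads $u(x)\ge C'|x|^{|\gamma|}$, which is polynomial growth of $u$.
\item For $\gamma=0$ it reads $\inf u>0$.
\end{itemize}
Neither follows from positivity of $u$ and eventual monotonicity of $m_u$, since an eventually nonincreasing $m_u$ may tend to $0$. The paper does not cover this range either. Its equation for $\overline{A}$ has no solution at $\gamma=0$, and the passage $R_k^{-\gamma}\to 0$ requires $\gamma>0$. The lemma is only applied with $\gamma=\alpha_2>0$. So the correct fix is to state and prove the lemma for $0<\gamma\le\beta$, not to assert that the remaining range is trivial.

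A smaller point concerns comparison. Theorem~3.3 of \cite{crandall1992user} assumes strict monotonicity in the zeroth-order variable, which $-\mathcal M^+(X)+|p|^q-C|x|^{-(\gamma+1)q}$ lacks. So this is not literally ``exactly the setting'' of that theorem; the paper cites it in the same loose way. You can avoid the citation entirely:
\begin{itemize}
\item Choose $\overline{A}$ so that your bracket is strictly below $C$. Then each $\xi_k$ is a strict classical subsolution.
\item If $u-\xi_k$ had a negative minimum on the closed annulus, it would be attained at an interior point $x_0$.
\item At $x_0$, $\xi_k$ plus a constant touches $u$ from below, and the viscosity supersolution inequality there contradicts strictness.
\end{itemize}
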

\begin{proof}
The proof of this part also follows on the same line as Lemma \ref{lem6}. So will only give required changes. Since $\gamma \leq\beta$ the function $\Psi(x)=A_1 |x|^{-\gamma}+A_2$ is a subsolution of \eqref{3.5} if $0<A_1\le \overline{A}$, where $\overline{A}$ satisfies
\[\gamma\overline{A}\big[\lambda (n-1)-\Lambda(\gamma+1)\big]+\gamma^q\overline{A}^q=C.\]
Notice that such a choice of $\overline{A}$ is possible.
\end{proof}
\begin{remark}
When
\[-\mathcal{M}^+(D^2u)\ge C|x|^{-n}~~\text{in}~~\mathbb{R}^n\setminus B_{R_0},\]
in similar way can show, as in Lemma~\eqref{lem6}, that $u(x)\ge C|x|^{2-n}\log|x|$ for $|x|$ large enough and for some $C>0$. In this case, one considers the function $\xi(x)= A_1|x|^{2-n}\log|x|$ which is a subsolution of the equation for a suitable choice of $A_1$ (see~ \cite{Alessandra}). We will be using the above lower estimate in the proof of \ref{thm5}.
\end{remark}
\section{Upper estimates}
In the previous section we have obtained the lower for the solution of system \eqref{main}. In this section we establish the upper bound on the solution of \eqref{main}. It will be obtained in two cases separately. 
\subsection{}\textbf{Upper estimates in the case $\bm{q<q_c}:$}
This subsection deals with the upper bound on the solution of \eqref{main} under the case $q<q_{c}.$ Although the proof of the related results follows on the similar line as in \cite{burgos2018lioville}, but the solution in our case is not $C^{2}$ instead these belongs to $W^{2,s}_{\mathrm{loc}}.$ So we can not directly adopt the technique of \cite{burgos2018lioville} as it involves the evaluation of $w^{\prime\prime},z^{\prime\prime}$ at arbitrary points which is not possible for Sobolev functions. We will try to use the fact that these derivative exists almost everywhere as $w^{\prime},z^{\prime}$ are absolutely continuous.
\begin{lemma}\label{lem8}
Let $\widetilde{n}_{+}\geq2$ and $1<q<q_{c}$. Assume also that $w,z\in W^{2,s}_{loc}\big(R_0,+\infty\big)\cap
C^{1}\big(R_0,+\infty\big)$ be bounded positive functions satisfying 
\begin{equation}\label{4.1}
\begin{cases}
-\theta(w''(r))w''(r)-\theta(w'(r))(n-1)\frac{w'(r)}{r}+|w'|^q= c z^{p_1}&\text{a.e. in}~(R_0,+\infty) \\
-\theta(z''(r))z''(r)-\theta(z'(r)) (n-1)\frac{z'(r)}{r}+|z'|^q=d w^{p_{2}}&\text{a.e. in}~(R_0,+\infty),
\end{cases}
\end{equation}
where $p_1,p_2>0$ and $c,d>0$. Then,
\begin{itemize}
\item[(a)] If $p_1p_2<q^{2}$, then \eqref{4.1} admits no positive solution.
\item[(b)] If $p_{1}p_{2}=q^{2},$ there exist positive constants $C_1,C_2>0$ such that
\[
w\le C_1r^{-\frac{p_1}{q}}e^{-C_2r},~~z\le C_1r^{-\frac{p_{2}}{q}}e^{-C_2 r},~~\text{for}~~~r\ge R_0.
\]
\item[(c)] If $p_{1}p_{2}>q^{2},$ then there exists a positive constant $C$ such that
\[w\leq Cr^{-\alpha_{1}},~~z\leq Cr^{-\alpha_{2}}~~\text{for}~~~r\ge R_0.\]
\end{itemize}
\end{lemma}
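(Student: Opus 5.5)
The plan is to reduce everything to the discrete two-step inequality of Lemma \ref{lem2}(a) and then iterate along dyadic radii. Since $w,z$ are radial, $m_w(R)=w(R)$ and $m_z(R)=z(R)$. The pair $(w,z)$ is a positive bounded supersolution of \eqref{main} with $f_1(t)=ct^{p_1}$ and $f_2(t)=dt^{p_2}$, so \eqref{mueq} holds with $\mu_1=c$, $\mu_2=d$.

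\textbf{Step 1 (monotonicity and decay).} First I show that $w,z\to0$ and $w',z'<0$ for large $r$. By Remark \ref{monotone} and Lemma \ref{lem2}(a), $w,z$ are eventually nonincreasing and tend to $0$. The argument given at the beginning of Section \ref{lower} then upgrades this to $w'<0$ and $z'<0$. That argument evaluates $w''$ at a point, so in the $W^{2,s}$ setting I would replace it by a level-crossing argument on the absolutely continuous function $w'$. Lemma \ref{lem2}(a) then gives, for large $R$,
\[
z^{p_1}(2R)\le C\Bigl(\frac{w(R)}{R^2}+\frac{w(R)^q}{R^q}\Bigr),\qquad
w^{p_2}(2R)\le C\Bigl(\frac{z(R)}{R^2}+\frac{z(R)^q}{R^q}\Bigr).
\]

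\textbf{Step 2 (gradient domination).} Next I remove the diffusive term $w/R^2$. Lemma \ref{lem5} gives $w(R)\ge cR^{-\beta}$ with $\beta=\frac{2-q}{q-1}$. This lemma requires $\widetilde n_+>2$; in the borderline case $\widetilde n_+=2$ I would redo its proof, using that $\gamma=(q-1)(\widetilde n_+-1)<1$ still holds there. The lower bound yields $w^{1-q}\le CR^{\beta(q-1)}=CR^{2-q}$, hence $w/R^2\le C\,w^q/R^q$, and likewise for $z$. Therefore
\[
z(2R)\le C\,w(R)^{q/p_1}R^{-q/p_1},\qquad w(2R)\le C\,z(R)^{q/p_2}R^{-q/p_2}.
\]
Composing the two inequalities gives
\[
w(4R)\le C\,w(R)^{\vartheta}R^{-\kappa},\qquad \vartheta=\frac{q^2}{p_1p_2},\quad \kappa=\frac{q}{p_2}\Bigl(1+\frac{q}{p_1}\Bigr)>0,
\]
and the analogous inequality holds for $z$.

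\textbf{Step 3 (case analysis on $\vartheta$).}
\begin{itemize}
\item[(c)] If $p_1p_2>q^2$, then $\vartheta<1$. Set $a_k=(4^kR_1)^{\alpha_1}w(4^kR_1)$, using the index convention that makes $\alpha_1$ the fixed point of the exponent map $a\mapsto\vartheta a+\kappa$. The recursion becomes $a_{k+1}\le C a_k^{\vartheta}$, which forces $\sup_k a_k<\infty$. Monotonicity of $w$ transfers the bound from the points $4^kR_1$ to all $r\ge R_0$, giving $w\le Cr^{-\alpha_1}$. The same argument gives $z\le Cr^{-\alpha_2}$.
\item[(a)] If $p_1p_2<q^2$, then $\vartheta>1$ and $w(4^kR_1)\to0$. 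Taking logarithms in the recursion shows $-\log w(4^kR_1)\ge c\,\vartheta^k$ eventually, so $w$ decays faster than any power. This contradicts $w\ge cr^{-\beta}$, so no positive solution exists.
\item[(b)] If $p_1p_2=q^2$, then $\vartheta=1$ and $w(4R)\le CR^{-\kappa}w(R)$. Iterating gives $w(r)\le r^{-c\log r}$, again superpolynomial decay. To reach the precise form $w\le C_1r^{-p_1/q}e^{-C_2r}$, I would return to the first-order inequality \eqref{5.2lem5}, now combined with $cz^{p_1}$ bounded by a power of $w$ through the previous step. That gives a Riccati-type inequality for $w'/w$ yielding a linear-in-$r$ exponent.
\end{itemize}

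The main obstacle is Step 2 together with the endpoint issues. Lemma \ref{lem5} is stated for $\widetilde n_+>2$, while the hypothesis here is $\widetilde n_+\ge2$. Also, Step 1 cannot evaluate $w''$ pointwise, so I would rely on Lemma \ref{inversion lemma} and absolute continuity of $w'$ for every pointwise second-order step.
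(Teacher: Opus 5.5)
Your route is genuinely different from the paper's. The paper first proves a \emph{pointwise} gradient domination $-w'\ge\nu z^{p_1/q}$, $-z'\ge\nu w^{p_2/q}$. It does this by a level-crossing argument for $\mathcal H_1=cz^{p_1}-a|w'|^q$, through the logarithmic derivative of $V=z^{p_1}/|w'|^q$, using $|w'|^{q-1}\ge C/r$ from Lemma~\ref{lem5}. It then integrates an ODE inequality after the change of variables $t=\int_r^\infty\nu z^{p_1/q}$. You never need pointwise information on $w'$. You use the lower bound $w,z\ge cr^{-\beta}$ only to absorb the diffusion term $w/R^2$ into $w^q/R^q$ in the averaged estimate of Lemma~\ref{lem2}(a), and then iterate along dyadic radii.

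For (a) and (c) this is correct and more elementary than the paper's argument.
\begin{itemize}
\item In (c) no index convention is needed: the fixed point $\kappa/(1-\vartheta)$ equals $q(p_1+q)/(p_1p_2-q^2)=\alpha_1$ exactly. The recursion $a_{k+1}\le Ca_k^{\vartheta}$ with $\vartheta<1$ bounds $a_k$, and monotonicity fills in between dyadic radii.
\item In (a), the decay $w(4^kR_1)\le e^{-c\vartheta^k}$ contradicts Lemma~\ref{lem5}.
\item Your remark on $\widetilde n_+=2$ is right: the proof of Lemma~\ref{lem5} only uses $\gamma<1$ and $q<2$, both of which follow from $q<q_c\le2$.
\end{itemize}

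The gap is in (b). Absorbing $w/R^2$ needs annuli of width comparable to $R$, so your iteration is multiplicative in $r$ and yields only $w\le r^{-c\log r}$. The repair you sketch does not work.
\begin{itemize}
\item The inequality \eqref{5.2lem5} was obtained by discarding $cz^{p_1}\ge0$, and it only yields $-w'\ge Cr^{-1/(q-1)}$.
\item Inserting an upper bound on $cz^{p_1}$ into the $w$-equation gives \emph{upper} bounds on $-w'$, roughly $-w'(r)\lesssim w(r/2)/r$.
\item Neither produces $-w'\ge c\,w$ (or its coupled analogue), which is what a linear-in-$r$ exponent requires. That is exactly what the paper's pointwise gradient domination supplies.
\end{itemize}

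However, you do not need the exponential bound. The estimate $w\le r^{-c\log r}$ already contradicts $w\ge cr^{-\beta}$ from Lemma~\ref{lem5}, exactly as in your case (a). Hence for $p_1p_2=q^2$ there is no bounded positive solution, and (b) holds vacuously. This is also the only way (b) is used in part (1) of Theorem~\ref{thm1}, where the exponential bound serves solely to contradict Lemma~\ref{lem5}. With this observation in place of your final sentence of (b), the proof is complete.
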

\begin{proof}
We start by observing that since $z\geq0$ so in view of Remark \ref{monotone} we get $w^{\prime}\leq 0$ pointwise as the function is $C^{1}.$ Also in view of $(q-1)|t|^{2}+(2-q)|t|\ge|t|^{q}$ for all $t\in\mathbb{R},$ the first equation in \eqref{4.1} can be rewritten as 
\[-\theta(w''(r))w''(r)-\theta(w'(r))(n-1)\frac{w'(r)}{r}+(q-1)|w^{\prime}|^{2}+(2-q)|w^{\prime}|\geq c_{1} z^{p_1}~~\text{a.e. in}~(R_0,+\infty)\]
Now define $u(r):=\frac{\lambda}{q-1}\left(1-e^{-\frac{(q-1)w(r)}{\lambda}}\right),$ and setting 
\[U(r)=\frac{u''(r)}{1-\frac{(q-1)u(r)}{\lambda}}+\frac{q-1}{\lambda}\frac{|u'(r)|^{2}}
{\left(1-\frac{(q-1)u(r)}{\lambda}\right)^{2}},\] we get 
\begin{equation}\label{u-final}
-\theta\bigl(U(r)\bigr)u^{\prime\prime}(r)-\left(
\frac{(n-1)\theta\bigl(u^{\prime}(r)\bigr)}{r}+(2-q)\right)u'(r)\geq cz(r)^{p_{1}}
\left(1-\frac{(q-1)u(r)}{\lambda}\right)\ge0.
\end{equation}
So by applying the strong maximum principle, we find $u^{\prime}<0$ which is equivalent to $w^{\prime}<0.$ Similarly, we can also show that $z^{\prime}<0.$ Now, our aim is to show that
\begin{equation}\label{aim}
\begin{cases}
-w'(r)\ge\nu z(r)^{p_1/q}&\text{for}~~ r\ge R_1 \\
-z'(r)\ge\nu w(r)^{p_2/q}&\text{for}~~ r\ge R_1 .
\end{cases}
\end{equation}
In order to prove this we define the following
\[
\begin{cases}
\mathcal H_1(r)=cz(r)^{p_1}-a|w^{\prime}|^{q}\\
\mathcal H_2(r)=dw(r)^{p_2}-a|z^{\prime}|^{q}.\\
\end{cases}
\]
We want to show that there exists an $R_1$ such that for $r\geq R_{1},$ we have $\mathcal H_i(r)\leq 0$ for $i=1,2.$ Since the proof of both the cases are same. We prove the result for $\mathcal H_1(r)$ the result for $\mathcal H_2(r)$ follows on the same way. We first prove that if there exists $r_0>R_0$ such that $\mathcal H_{1}(R_0)\leq0,$ then
\[\mathcal H_{1}(r)\le0\qquad\text{for all}~r\ge R_0.\]
Suppose by contradiction that there exists $r_{1}>R_0$ such that $\mathcal H_{1}(r_{1})>0.$ Since $\mathcal H_{1}$ is continuous, the set
\[O:=\{r\in[R_0,r_{1}]:\mathcal H_{1}(r)>0\}\]
is a nonempty open subset of $[R_0,r_{1}].$ Also by the continuity of $\mathcal H_{1},$ we find that $\mathcal H_{1}(\rho)=0,$ where $\rho:=\inf~O.$  Since $O$ is open, there exists $\eta>0$ such that $(\rho,\rho+\eta)\subset~O.$ Consequently,
\[\mathcal H_{1}(r)>0\qquad\text{for all }r\in(\rho,\rho+\eta).\]
Set
\[\begin{aligned}
\begin{cases}
&Y(r):=-w'(r)>0,\\
&V(r):=\frac{z(r)^{p_{1}}}{Y(r)^{q}},\\
&\text{observe that}~\mathcal H_{1}(r)>0~~\Longleftrightarrow~~cV(r)>a.
\end{cases}
\end{aligned}
\]
Let us also assume that there exist constants $a^{*}>1$ and $\varepsilon>0$
depending only on the structural data such that whenever $cV(r)\ge a^{*},$ $\frac{V'(r)}{V(r)}\le-\frac{\varepsilon}{r}~~\text{a.e.}$ Choosing $a\ge a^{*}$, we obtain on $(\rho,\rho+\eta)$
\[\frac{V'(r)}{V(r)}\leq-\frac{\varepsilon}{r}\qquad\text{a.e.}\]
Since $V\in AC_{\mathrm{loc}},$ the fundamental theorem of calculus yields
\[\log\frac{V(r)}{V(s)}=\int_{s}^{r}\frac{V'(\tau)}{V(\tau)}d\tau\leq-\varepsilon\int_{s}^{r}\frac{d\tau}{\tau}<0\]
for every $\rho<s<r<\rho+\eta.$ Hence $V(r)<V(s)$ for $\rho<s<r<\rho+\eta,$ so $V$ is strictly decreasing
on $(\rho,\rho+\eta)$. Passing to the limit $s\downarrow\rho$, and using $cV(\rho)=a,$ we obtain
\[cV(r)<a\qquad\text{for all}~~r\in(\rho,\rho+\eta).\]
Therefore
\[\mathcal H_{1}(r)=c z(r)^{p_{1}}-aY(r)^{q}<0\]
on $(\rho,\rho+\eta),$ which contradicts the definition of $O$. The contradiction proves that
\[\mathcal H_{1}(r)\le0\qquad\text{for all }r\ge R_0.\]
\textbf{Proof of the assumption:}~Since $z\in W^{2,s}_{\mathrm{loc}},~~Y=-w'\in W^{1,s}_{\mathrm{loc}},$ and $Y(r)>0$ for $r>R_1,$ consequently, $V\in W^{1,s}_{\mathrm{loc}}((R_1,\infty)).$ Hence $V$ is absolutely continuous so for a.e. for $r>R_1$ we have
\[\frac{V'(r)}{V(r)}=p_1\frac{z'(r)}{z(r)}-q\frac{Y'(r)}{Y(r)}.\]
From the first equation of \eqref{4.1},
\[\theta(w''(r))Y'(r)+\lambda(n-1)\frac{Y(r)}{r}+Y(r)^q=cz(r)^{p_1},\]
and therefore $Y'(r)=\frac{cz(r)^{p_1}-Y(r)^q-\lambda(n-1)\frac{Y(r)}{r}}{\theta(w''(r))}.$  Since $cz(r)^{p_1}=cV(r)Y(r)^q,$ we obtain
\[Y'(r)=\frac{Y(r)^q(cV(r)-1)-\lambda(n-1)\frac{Y(r)}{r}}{\theta(w''(r))}.\]
Dividing by $Y(r),$
\[\frac{Y'(r)}{Y(r)}=\frac{Y(r)^{q-1}(cV(r)-1)-\lambda(n-1)r^{-1}}{\theta(w''(r))}.\]
Substituting into the logarithmic derivative gives
\begin{equation}\label{log-derivative-V}
\frac{V'(r)}{V(r)}=p_1\frac{z'(r)}{z(r)}-q\frac{Y(r)^{q-1}(cV(r)-1)-\lambda(n-1)r^{-1}}{\theta(w''(r))}.
\end{equation}
Since $\lambda\le\theta(w''(r))\le\Lambda,$ it follows that whenever $cV(r)-1>0,$ we have
\[-\frac{q\,Y(r)^{q-1}(cV(r)-1)}{\theta(w''(r))}\leq-\frac{q}{\Lambda}Y(r)^{q-1}(cV(r)-1),\]
and $q\frac{\lambda(n-1)r^{-1}}{\theta(w''(r))}\leq~q(n-1)\frac{1}{r}.$ Consequently,
\begin{equation}\label{log-derivative-upper}
\frac{V'(r)}{V(r)}\leq p_1\frac{z'(r)}{z(r)}-\frac{q}{\Lambda}Y(r)^{q-1}(cV(r)-1)+q(n-1)\frac{1}{r}.
\end{equation}
Noting that $\frac{z'(r)}{z(r)}<0$ and $Y^{q-1}\geq\frac{C}{r},$ we can find a constant $a^{*}$ such that the assumption becomes true.
Next, we claim that there exists $r_0>R_0$ such that $\mathcal H_1(R_0)\leq0.$ Indeed, from \eqref{log-derivative-upper} and the estimate $Y(r)^{q-1}\ge C_0 r^{-1}$, we obtain
\[\frac{V'(r)}{V(r)}\le\frac{1}{r}\Bigl[q(n-1)-\frac{qC_0}{\Lambda}(cV(r)-1)\Bigr].\]
Set $a^\ast:=1+\frac{\Lambda}{qC_0}\Bigl(q(n-1)+1\Bigr).$ Then whenever $cV(r)\ge a^*$, $\frac{V'(r)}{V(r)}\leq-\frac{1}{r}.$
Assume by contradiction that
\[V(r)\ge \frac{a^*}{c}\qquad\text{for all}~~r\ge R.\]
Integrating the above inequality over $(R,r)$ yields
\[\log\frac{V(r)}{V(R)}\le-\int_R^r \frac{ds}{s}=-\log\frac rR,\]
hence $V(r)\le V(R)\frac{R}{r}.$ Letting $r\to\infty$, we obtain $V(r)\to0$, which contradicts
$V(r)\ge a^*c^{-1}>0$.Therefore there exists $R_0\ge R$ such that $V(R_0)\le \frac{a^*}{c}.$ Equivalently, \[\mathcal H_1(R_0)=cz(R_0)^{p_1}-a^*Y(R_0)^q\le0.\]
By the argument established above, once $\mathcal H_1$ becomes
nonpositive it cannot return to positive values. Hence
\[\mathcal H_1(r)\leq0\qquad\text{for all}~r\ge R_0.\]
By taking $R_{1}=R_0$ we find that \eqref{aim} holds.\\
Now, we consider the change of variables defined by $W(t)=w(r)$ and $Z(t)=z(r)$, where
\[t=\mathcal{H}(r):=\int_r^{+\infty}\nu z^{\frac{p_1}{q}}(\rho)d\rho.\]
Then the function $\mathcal{H}$ is well-defined because
\begin{equation}\label{eq:5.3}
\int_r^{+\infty}\nu z^{\frac{p_1}{q}}(\rho)\ d\rho\le \int_r^{+\infty}-w'(\rho)\ d\rho=w(r)<+\infty.   
\end{equation}
Note that the functions $W$ and $Z$ are defined on $(0,\delta)$, where $\delta>0$, and satisfy $W(0)=Z(0)=0$. Moreover, by \eqref{eq:5.3}, it can be seen that $W(t)>t$. Moreover,
\[
\begin{aligned}
Z'(t)&=\frac{dz}{dr}.\frac{dr}{dt}= -\frac{z'(r)}{\nu z^{\frac{p_1}{q}}(r)}>\frac{w^{\frac{p_2}{q}}(r)}{z^{\frac{p_1}{q}}(r)}=\frac{W^{\frac{p_2}{q}}(t)}{ Z^{\frac{p_1}{q}}(t)}>\frac{t^{\frac{p_2}{q}}} {Z^{\frac{p_1}{q}}(t)},
\end{aligned}
\]
since $ W(t)=w(r)>t.$ Hence $Z^{\frac{p_1}{q}}(t) Z'(t) > t^{\frac{p_2}{q}},$ and integrating from $0$ to $t$ we have,
\[
\begin{aligned}
\begin{cases}
\int_{0}^{t}Z^{\frac{p_1}{q}}(\rho) Z'(\rho)d\rho=\frac{q}{p_1+q}\int_{0}^{t}\frac{d}{d\rho}\left(Z^{\frac{p_1+q}{q}}(\rho)\right)d\rho>\int_{0}^{t}t^{\frac{p_2}{q}}\\
\left(Z^{\frac{p_1+q}{q}}(t)-Z^{\frac{p_1+q}{q}}(0)\right)>\frac{p_1+q}{q}\frac{q}{p_2+q}.t^{\frac{p_2+q}{q}}\\
Z^{\frac{p_1+q}{q}}(t)>\underbrace{\frac{p_1+q}{p_2+q}}_{:=C}.t^{\frac{p_2+q}{q}}\\
Z(t)\geq Ct^{\frac{p_2+q}{p_1+q}},
 \end{cases}
\end{aligned}
\]
where we used $Z(0)=0$ in the second-to-last inequality. Returning to the original variables, we have $z(r)\geq C\mathcal{H}^{\frac{p_2+q}{p_1+q}}(r).$
Since $\mathcal{H}'(r)=-\nu w^{\frac{p_1}{q}}(r)$, we obtain
\[-\mathcal{H}'(r)=\nu w^{\frac{p_1}{q}}(r)\ge C\mathcal{H}^{\frac{p_1(P_2+q)}{q(p_1+q)}}(r),\]
which implies $-\mathcal{H}^{-{\frac{p_1(P_2+q)}{q(p_1+q)}}(r)}\mathcal{H}'(r)\geq C.$
Integrating between $R_0$ and $r$,
\begin{equation}\label{eq:5.4}
-\int_{R_0}^{r} \mathcal{H}^{-\frac{p_1(p_2+q)}{q(p_1+q)}}(\rho) \mathcal{H}'(\rho) d\rho\ge C (r-R_0).
\end{equation}
We consider three different cases according to the position of the exponent $\frac{p_1(p_2+q)}{q(p_1+q)}$ and $1.$\\
\begin{itemize}
 \item[\textbf{(a):}]~Assume that $\frac{p_1(p_2+q)}{q(p_1+q)}<1,$ which is equivalent to $p_1p_2<q^2$. Then \eqref{eq:5.4} gives  
\[
\begin{aligned}
\begin{cases}
-\int_{R_0}^{r} \mathcal{H}^{-\frac{p_1(p_2+q)}{q(p_1+q)}}(\rho) \mathcal{H}'(\rho)\, d\rho=-\frac{1}{1-\frac{p_1(p_2+q)}{q(p_1+q)}}\int_{R_0}^{r} \frac{d}{d\rho}\left(\mathcal{H}^{1-\frac{p_1(p_2+q)}{q(p_1+q)}}(\rho)\right) d\rho\ge C(r-R_0)\\
-\frac{1}{1-\frac{p_1(p_2+q)}{q(p_1+q)}}\left(\mathcal{H}^{1-\frac{p_1(p_2+q)}{q(p_1+q)}}(r)-\underbrace{\mathcal{H}^{1-\frac{p_1(p_2+q)}{q(p_1+q)}}(R_0)}_{C_1}\right)\ge C(r-R_0) \\
-\frac{\mathcal{H}^{1-\frac{p_1(p_2+q)}{q(p_1+q)}}(r)-C_1}{1-\frac{p_1(p_2+q)}{q(p_1+q)}}\ge C(r-R_0), \\
\end{cases}
\end{aligned}
\]
where $C_1>0$ is different positive constant. Letting $r\to+\infty$ and using the fact that $\mathcal{H}(r)\to 0,$ we obtain a contradiction. Hence, $(a)$ proved.
\item[\textbf{(b):}]~Assume that $\frac{p_1(p_2+q)}{q(p_1+q)}=1,$ which is equivalent to $p_1p_2=q^2$. Then \eqref{eq:5.4} gives 
\[
\begin{aligned}
\begin{cases}
-\int_{R_0}^{r} \mathcal{H}^{-1}(\rho) \mathcal{H}'(\rho)\, d\rho=-\int_{R_0}^{r} \frac{d}{d\rho}(\log \mathcal{H}(\rho)) d\rho\ge C(r-R_0)\\
-\log \mathcal{H}(r)+\log \mathcal{H}(R_0)\ge C(r-R_0)\\
\mathcal{H}(r)\le e^{-Cr}.
\end{cases}
\end{aligned}
\] 
Moreover,
\[
\mathcal{H}(r)
=\int_{r}^{+\infty} \nu z^{\frac{p_1}{q}}(\rho)\, d\rho\ge\nu\int_{r}^{2r}  z^{\frac{p_1}{q}}(\rho)\, d\rho\ge \nu r\, z^{\frac{p_1}{q}}(2r).
\]
so that $\nu r z^{\frac{p_1}{q}}(2r)\leq \mathcal{H}(r)\leq e^{-C_2r},$
 which implies $z^{\frac{p_1}{q}}(2r)\le C r^{-1} e^{-C_2r}.$ A similar argument provides an upper bound for $w.$ Hence, $(b)$ follows. 
\item[\textbf{(c):}]
 Assume that $\frac{p_1(p_2+q)}{q(p_1+q)}>1,$ which is equivalent to $p_1p_2>q^2$. Then \eqref{eq:5.4} gives  
\[
\mathcal{H}^{1-{\frac{p_1(p_2+q)}{q(p_1+q)}}(r)}\ge Cr,
\]
hence $\mathcal{H}(r)\le C r^{-\frac{q(p_1+q)}{p_1p_2-q^2}}.$ Since $\mathcal{H}(r)\ge C r z^{\frac{p_1}{q}}(2r)$, we deduce
\[
z^{\frac{p_1}{q}}(2r)\le Cr^{-\frac{q(p_1+q)}{p_1p_2-q^2}-1}=Cr^{-\frac{p_1(p_2+q)}{p_1p_2-q^2}},
\]
thus $z(r)\leq C r^{-\frac{q(p_2+q)}{p_1p_2-q^2}}= C r^{-\alpha_{2}},$ where $\alpha_{2}=\frac{q(p_2+q)}{p_1p_2-q^2}.$
The estimate for $w$ is obtained similarly. This completes the proof.\\
\end{itemize}
\end{proof}
\subsection{}\textbf{Upper estimates when $\bm{q>q_{c}}:$}
 In this subsection, we obtain upper estimates for positive solutions of \eqref{main} which do not blow up at infinity and $q>q_{c}.$ It will be achieved by proper adaptation of the doubling lemma introduced in \cite{Polavcik}. Due to the symmetric nature of the problem we assume $p_1 \geq p_2.$ In addition to this we also assume that $p_{1}p_{2}>1.$ Furthermore, the following notation will also be helpful in formulating the problem:
 \[\bar{\alpha}_{1}=\frac{2(p_1+1)}{p_{1}p_{2}-1},~~\bar{\alpha}_{2}=\frac{2(p_2+1)}{p_{1}p_{2}-1}.\]
We would also like to recall that $\beta=\frac{2-q}{q-1}.$ As we mention the main result of this subsection is proved by using the doubling lemma followed by the rescaling of solution. In one of the cases the limiting equation turns out to be 
\begin{equation}\label{5.31}
\begin{cases}
-\mathcal{M}^+(D^2\bar{u})=c\bar{v}^{p_1}&\text{in}~~\mathbb{R}^{n}\\
-\mathcal{M}^+(D^2\bar{v})+|\nabla \bar{v}|^{q}=d \bar{u}^{p_2}&\text{in}~~ \mathbb{R}^{n}.
\end{cases}.
\end{equation}
We seek the conditions on the exponents under which above system does not have non-trivial solution. The above situation will arise when $\alpha_{2}=\beta.$ In the case of Laplace equation, above system has been studied in \cite{burgos2018lioville}. So our next result is an adaptation in the setting of Pucci's extremal operator. 
\begin{theorem}\label{thm5}
Suppose $\widetilde{n}_{+}\geq2,$ $q>q_{c}$ and $p_1p_2>1.$ Let $\bar{\alpha}_{1},\bar{\alpha}_{2}$ be defined as in \eqref{1.7}. If $\bar{\alpha}_{1} \ge \tilde{n}_{+}-2$ and $\bar{\alpha}_{2}=\beta,$ then the system \eqref{5.31}
admits no positive viscosity supersolutions for any $c,d>0.$
\end{theorem}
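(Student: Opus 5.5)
We argue by contradiction: suppose $(\bar u,\bar v)$ is a positive viscosity supersolution of \eqref{5.31} in $\mathbb{R}^n$. The plan is an iteration of lower bounds (upgrading decay via the first equation) played against the Liouville/Hadamard-type obstruction for $-\mathcal M^+(D^2\cdot)\ge 0$ in the whole space. First, since $\bar u$ is a positive supersolution of $-\mathcal M^+(D^2\bar u)\ge0$ in $\mathbb{R}^n$ (actually in any exterior domain), comparison with the fundamental solution $|x|^{2-\widetilde n_+}$ of $\mathcal M^+$ on annuli (as in \cite{Alessandra}) gives $\bar u(x)\ge C|x|^{2-\widetilde n_+}$ for $|x|\ge1$. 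Similarly, $\bar v$ is a supersolution of $-\mathcal M^+(D^2\bar v)+|\nabla\bar v|^q\ge0$, and since $q>q_c$, the function $|x|^{2-\widetilde n_+}$ is (up to a small multiplicative constant and for $|x|$ large) a subsolution of this gradient equation, so again by the comparison argument of Lemma \ref{lem6} we get $\bar v\ge C|x|^{2-\widetilde n_+}$. In particular we may take radial minima $m_{\bar u},m_{\bar v}$ and work with them, as in Lemma \ref{lem2} and Lemma \ref{lem3}.

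Next, feed these bounds into the system. The second equation gives $-\mathcal M^+(D^2\bar v)+|\nabla\bar v|^q\ge d\,C|x|^{-p_2(\widetilde n_+-2)}$; writing this as $C|x|^{-\gamma-2}$ or $C|x|^{-(\gamma+1)q}$, Lemmas \ref{lem6} and \ref{lem7} yield $\bar v\ge C|x|^{-\gamma_1}$ with an improved exponent, and then the first equation, $-\mathcal M^+(D^2\bar u)\ge c\,\bar v^{p_1}\ge C|x|^{-p_1\gamma_1}$, together with Lemma \ref{lem6} (and the logarithmic Remark after Lemma \ref{lem7} in the borderline $-n$ case, with $\widetilde n_+$ replacing $n$) improves the bound on $\bar u$. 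Iterating, the exponents $(\gamma^{(k)}_u,\gamma^{(k)}_v)$ follow the linear recursion $\gamma_v=\max\{p_2\gamma_u-2,\ (p_2\gamma_u-q)/q\}$-type, $\gamma_u=p_1\gamma_v-2$, whose fixed point is exactly $(\bar\alpha_1,\bar\alpha_2)$ when the ``Laplacian branch'' $\gamma\ge\beta$ is active. Since $\bar\alpha_2=\beta$, the two branches in the $\bar v$ equation coincide at the fixed point, which is why the hypothesis $\bar\alpha_2=\beta$ makes the scaling consistent. Because $p_1p_2>1$, starting from $\gamma^{(0)}=\widetilde n_+-2\le\bar\alpha_1$ the iterates decrease monotonically; the dichotomy is then: if $\bar\alpha_1>\widetilde n_+-2$ strictly, after finitely many steps the exponent for $\bar u$ drops below $\widetilde n_+-2$ relative to the right-hand side decay $|x|^{-\widetilde n_+}$ or slower, and a supersolution of $-\mathcal M^+(D^2\bar u)\ge C|x|^{-\sigma}$ with $\sigma\le 2$ (or $\sigma<\widetilde n_+$ with too slow decay of $\bar u$) cannot be positive in an exterior domain — contradiction via the test-function estimate of Lemma \ref{lem2}(a): $m_{\bar v}^{p_1}(2R)\le C m_{\bar u}(R)R^{-2}$, $m_{\bar u}^{p_2}(2R)\le C(m_{\bar v}(R)R^{-2}+m_{\bar v}^q(R)R^{-q})$, which combined give upper bounds $m_{\bar u}\le CR^{-\bar\alpha_1}$ contradicting the lower bounds once they exceed them.

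The critical case $\bar\alpha_1=\widetilde n_+-2$ is the main obstacle: there the lower bound and the upper bound from Lemma \ref{lem2}(a) have the same power, and one must gain a logarithm. I would first use the Remark after Lemma \ref{lem7}: at the fixed point, $-\mathcal M^+(D^2\bar u)\ge c\bar v^{p_1}\ge C|x|^{-p_1\bar\alpha_2}=C|x|^{-\bar\alpha_1-2}=C|x|^{-\widetilde n_+}$, giving $\bar u\ge C|x|^{2-\widetilde n_+}\log|x|$; inserting this into the second equation (using $\bar\alpha_2=\beta$, via Lemma \ref{lem7}) gives $\bar v\ge C|x|^{-\beta}(\log|x|)^{\kappa}$ with $\kappa>0$, then $\bar u\ge C|x|^{2-\widetilde n_+}(\log|x|)^{1+p_1\kappa}$, and the log powers grow without bound, contradicting the pure-power upper estimate $m_{\bar u}(R)\le CR^{-\bar\alpha_1}$ furnished by the Lemma \ref{lem2}(a) inequalities. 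The delicate points I expect are constructing explicit log-corrected radial subsolutions for $\mathcal M^+$ with the gradient term (checking the gradient term is lower order precisely because $\gamma=\beta$ sits at the balance $|x|^{-(\gamma+1)q}=|x|^{-\gamma-2}$, so a small multiplicative constant is needed), and making the comparison on annuli rigorous for viscosity supersolutions.
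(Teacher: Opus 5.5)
Your architecture matches the paper's proof. It gets $m_{\bar u}(R)\ge CR^{2-\widetilde n_+}$ from the fundamental solution, then $m_{\bar v}(R)\ge CR^{-\bar\alpha_2}$ via Lemma~\ref{lem6} with $\gamma=\bar\alpha_2=\beta$. It then uses an upper bound $m_{\bar u}(R)\le CR^{-\bar\alpha_1}$ from the test-function inequalities of Lemma~\ref{lem2}. This gives an immediate contradiction when $\bar\alpha_1>\widetilde n_+-2$, and a logarithmic gain handles $\bar\alpha_1=\widetilde n_+-2$.

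\textbf{The gap.} The upper bound is the step carrying the weight, and you only assert it (``which combined give''). Set $\tilde m_u=R^{\bar\alpha_1}m_{\bar u}$ and $\tilde m_v=R^{\bar\alpha_2}m_{\bar v}$. The identities $p_1\bar\alpha_2=\bar\alpha_1+2$, $p_2\bar\alpha_1=\bar\alpha_2+2$ and $q(\bar\alpha_2+1)=\bar\alpha_2+2$ give $\tilde m_v(2R)^{p_1}\le C\tilde m_u(R)$ and $\tilde m_u(2R)^{p_2}\le C(\tilde m_v(R)+\tilde m_v(R)^q)$. The last identity is the only place where $\bar\alpha_2=\beta$ enters.

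The arguments are shifted, so the chain does not close by itself. You need one of two things:
\begin{enumerate}
\item the doubling bound $m_{\bar u}(2R)\ge c\,m_{\bar u}(R)$, which the paper takes from the monotonicity of $R^{\widetilde n_+-2}m_{\bar u}(R)$ (Cutr\`{i}--Leoni); this is strictly more than the pointwise lower bound you state;
\item an iteration along $R_k=4^kR_0$, combined with the monotonicity of $m_{\bar u}$.
\end{enumerate}

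Either way you end with an inequality of the form $\tilde m^{p_1p_2}\le C(\tilde m+\tilde m^{q})$. This bounds $\tilde m$ only if $p_1p_2>q$, which is not a hypothesis, and you never address it. It does follow from $\bar\alpha_2=\beta$. Since $\bar\alpha_2>0$, we must have $q<2$. If $p_1p_2\le q$, then $(2-q)(p_1p_2-1)\le(2-q)(q-1)<2(p_2+1)(q-1)$, i.e.\ $\beta<\bar\alpha_2$, a contradiction. The paper proves this as its first step.

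\textbf{Unnecessary iterations.} Once the upper bound is available, both of your iterations can be dropped.
\begin{itemize}
\item For $\bar\alpha_1>\widetilde n_+-2$, the contradiction $CR^{2-\widetilde n_+}\le m_{\bar u}(R)\le CR^{-\bar\alpha_1}$ is immediate.
\item For $\bar\alpha_1=\widetilde n_+-2$, a single log gain suffices. The bound $m_{\bar u}(R)\ge CR^{2-\widetilde n_+}\log R$ comes from the gradient-free first equation: for large $|x|$, $-\mathcal M^+$ applied to $|x|^{2-\widetilde n_+}\log|x|$ equals $\Lambda(\widetilde n_+-2)|x|^{-\widetilde n_+}$. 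This already contradicts $m_{\bar u}(R)\le CR^{-(\widetilde n_+-2)}$.
\end{itemize}
So the log-corrected subsolutions of the gradient equation, which you flag as the delicate point, never enter.

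\textbf{A false claim.} A small multiple of $|x|^{2-\widetilde n_+}$ is not a subsolution of $-\mathcal M^+(D^2\phi)+|\nabla\phi|^q\le0$: the Pucci part vanishes identically and the gradient term is positive. A correct choice would be $A(|x|^{2-\widetilde n_+}+|x|^{2-\widetilde n_+-\delta})$ with $0<\delta<q(\widetilde n_+-1)-\widetilde n_+$. In any case this lower bound on $\bar v$ is not needed.
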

\begin{proof}
First we notice that there is no harm in assuming $p_{1}p_{2}>q.$ For if $p_{1}p_{2}\le q,$ then $p_{1}p_{2}-1\le q-1,$ consequently, we have $(p_{1}p_{2}-1)(2-q)\le (q-1)(2-q).$
Also in view of $q>1,$ we  have $(q-1)(2-q)<q-1<2(q-1)(p_{2}+1).$ Therefore combining above two cases, we have
\[
\bar{\alpha}_{2}=\frac{2(p_{2}+1)}{p_{1}p_{2}-1}>\frac{2-q}{q-1}=\beta,\]
which is inconsistent with the assumption. On one hand by following the same calculation as in Lemma~\ref{lem2}, we get 
\begin{equation}\label{5.4}
\begin{cases}
m_v^{p_1}(2R) \le C\frac{m_u(R)}{R^{2}},\\
m_u^{p_2}(2R) \le C\left(\frac{m_v(R)}{R^{2}}+\frac{m_v^{q}(R)}{R^{q}}\right),
\end{cases}
\end{equation}
where $m_{u},~m_{v}$ are defined by \eqref{min} and $R>0.$ On the other hand in view of Corollary 3.1 in \cite{Alessandra} the function $R^{\widetilde{n}_{+}-2}m_{u}(R)$ is increasing. Thus we find that $m_{u}(2R)\geq Cm_{u}(R)$ for some $C>0.$ Replacing $2R$ by $R$ in the first inequality of \eqref{5.4}, we obtain
\begin{equation}\label{5.5}
\begin{cases}
m_{v}^{p_{1}}(R)\leq C\,\frac{m_{u}(R)}{R^{2}},\\
m_{u}^{p_{2}}(R)\leq C\left(\frac{m_{v}(R)}{R^{2}}+\frac{m_{v}^{q}(R)}{R^{q}}
\right),
\end{cases}
\end{equation}
for sufficiently large $R.$ Now set
\[
\begin{cases}
\tilde{m}_{u}(R)=R^{\bar{\alpha}_{1}}m_{u}(R),\\
\tilde{m}_{v}(R)=R^{\bar{\alpha}_{2}}m_{v}(R).
\end{cases}
\]
Using \eqref{5.5} together with
\[
\bar{\alpha}_{1}+2=p_{1}\bar{\alpha}_{2},~~\bar{\alpha}_{2}+2=p_{2}\bar{\alpha}_{1},
\]
we get $m_{v}^{p_{1}}(R)\leq C\frac{m_{u}(R)}{R^{2}},$
that is, $R^{-p_{1}\bar{\alpha}_{2}}\tilde{m}_{v}^{p_{1}}(R)\leq C\,R^{-2-\bar{\alpha}_{1}}\tilde{m}_{u}(R).$ Since $p_{1}\bar{\alpha}_{2}=\bar{\alpha}_{1}+2,$ thus it follows that $\tilde{m}_{v}^{p_{1}}(R)\leq C\tilde{m}_{u}(R).$ 
Similarly, $\tilde{m}_{u}^{p_{2}}(R)\leq C\left(\tilde{m}_{v}(R)+\tilde{m}_{v}^{q}(R)\right).$ Combining the above two estimates yields \[\tilde{m}_{v}^{p_{1}p_{2}}(R)\le C\left(\tilde{m}_{v}(R)+\tilde{m}_{v}^{q}(R)\right).\] Thus in view of $p_{1}p_{2}>q>1,$ it follows that $\tilde{m}_{v}(R)$ is bounded for large $R$, and therefore $\tilde{m}_{u}(R)$ is also bounded. Thus we can find a positive constant $C>0$ such that, for $R$ sufficiently large
\[
\begin{cases}
m_{u}(R)\leq CR^{-\bar{\alpha}_{1}},\\m_{v}(R)\leq CR^{-\bar{\alpha}_{2}}
\end{cases}.
\] 
Moreover, since $R^{\widetilde n_+-2}m_u(R)$ is nondecreasing, there exists $C>0$ such that $m_u(R)\ge CR^{-(\widetilde n_+-2)}$ for all sufficiently large $R$.
Since
\[
-\mathcal M_{\lambda,\Lambda}^{+}(D^{2}v)+|\nabla v|^{q}
=du^{p_2},
\]
and $u(x)\ge m_u(|x|)
\ge C|x|^{-(\widetilde n_+-2)},$ it follows that $u(x)\geq C|x|^{-(\widetilde n_+-2)}.$
Hence
\[
-\mathcal M_{\lambda,\Lambda}^{+}(D^{2}v)
+|\nabla v|^{q}
\ge C|x|^{-\bar{\alpha}_{2}-2}.
\]
By Lemma~\ref{lem6} we have, $m_v(R)\ge CR^{-\bar\alpha_2}$ for all sufficiently large $R$.
Hence
\begin{equation}\label{5.6}
\begin{cases}
CR^{-(\widetilde{n}_{+}-2)}\leq m_{u}(R)\leq CR^{-\bar{\alpha}_{1}},\\
CR^{-\bar{\alpha}_{2}\leq m_{v}(R)}\leq CR^{-\bar{\alpha}_{2}},
\end{cases}
\end{equation}
for all sufficiently large $R.$ Thus, if $\bar{\alpha}_{1}>\widetilde{n}_{+}-2,$ then \eqref{5.6} gives an immediate contradiction. Therefore, now let us consider the case $\bar{\alpha}_{1}=\widetilde{n}_{+}-2.$ Again from \eqref{5.6}, we already know that for sufficiently large $R,$ $m_{u}(R)\leq CR^{-(\widetilde{n}_{+}-2)}.$ From \eqref{5.6}, we already know that for sufficiently large $R$
\begin{equation}\label{uppercritical}
m_{u}(R)\leq CR^{-(\widetilde{n}_{+}-2)}
\end{equation}
Moreover, from \eqref{5.6}, we also have $m_{v}(R)\geq CR^{-\bar{\alpha}_{2}}.$ Since $v(x)\ge m_{v}(|x|),$  it follows that for sufficiently large $|x|$ we have  $v^{p_{1}}(x)\geq C|x|^{-p_{1}\bar{\alpha}_{2}}.$ Using the identity $p_{1}\bar{\alpha}_{2}=
\bar{\alpha}_{1}+2,$ and $\bar{\alpha}_{1}=\widetilde{n}_{+}-2,$ we deduce that $p_{1}\bar{\alpha}_{2}=\widetilde{n}_{+}.$ Hence 
\[v^{p_{1}}(x)\geq C|x|^{-\widetilde{n}_{+}}~~\text{for}~~|x|\gg1.\]
Thus in view of above inequality and the first equation of \eqref{5.31}, we get
\begin{equation}\label{criticalineq}
-\mathcal M^{+}_{\lambda,\Lambda}(D^{2}u)\geq C|x|^{-\widetilde{n}_{+}}~~\text{in}~~\mathbb R^{n}\setminus B_{R_{1}},
\end{equation}
for some sufficiently large $R_{1}>0.$ Now, we apply the fully nonlinear logarithmic improvement for the
critical exponent. More precisely, if $w\ge0$ satisfies
\[-\mathcal M^{+}_{\lambda,\Lambda}(D^{2}w)\geq C|x|^{-\widetilde{n}_{+}}~~\text{in }~~\mathbb R^{n}\setminus B_{R_{1}},\]
then the Hadamard-type principle for supersolutions of Pucci equations implies the sharper lower bound
\[m_{w}(R)\ge CR^{-(\widetilde{n}_{+}-2)}\log R~~\text{for}~~R\gg1.\]
Applying this to $w=u,$  we conclude
\begin{equation}\label{lowercritical}
m_{u}(R)\geq CR^{-(\widetilde{n}_{+}-2)}\log R
\end{equation}
for all sufficiently large $R.$ This contradicts the upper estimate \eqref{uppercritical}, since $\log R\to+\infty$ as $R\to+\infty.$ Thus this case is also impossible and concludes the proof.
\end{proof}
\begin{lemma}\label{lem9}
Let $n\geq2$ and $q>q_{c}.$ Assume $\bar{\alpha}_1\ge\tilde{n}_{+}-2$ and $\bar{\alpha}_{2}\ge\beta$. If $(u,v)$ is a positive viscosity solution of 
\begin{equation}\label{5.1}
\begin{cases}
-\mathcal{M}_{\lambda,\Lambda}^+(D^2u)+|\nabla u|^{q}=c v^{p_1}&\text{in}~~\mathbb{R}^{n} \setminus B_{R_0},\\
-\mathcal{M}_{\lambda,\Lambda}^+(D^2v)+|\nabla v|^{q}=d u^{p_2}&\text{in}~~\mathbb{R}^{n} \setminus B_{R_0},
\end{cases}
\end{equation}
satisfying $\lim_{|x|\to\infty} u(x)=\lim_{|x|\to\infty} v(x)=0,$ then there exists a constant $C>0$ such that 
\[u(x)\le C|x|^{-\bar{\alpha}_{1}}~~\text{and}~~v(x) \le C|x|^{-\bar{\alpha}_{2}} \]
for all $|x|$ sufficiently large.
\end{lemma}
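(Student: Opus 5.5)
We argue by contradiction through the doubling lemma of Pol\'a\v{c}ik--Quittner--Souplet \cite{Polavcik}, followed by a blow-up rescaling and a Liouville theorem for the limit problem. We introduce
\[
M(x):=u(x)^{\frac{1}{\bar\alpha_1}}+v(x)^{\frac{1}{\bar\alpha_2}},
\]
and it suffices to show $M(x)\le C|x|^{-1}$ for $|x|$ large. Suppose instead that there are $x_k$ with $|x_k|\to\infty$ and $M(x_k)|x_k|\to\infty$. We apply the doubling lemma on $\mathbb R^n\setminus B_{R_0}$ with the distance to $\partial B_{|x_k|/2}$ (equivalently, on the annuli $\{|x_k|/2<|x|<2|x_k|\}$). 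This produces points $y_k$ with $M(y_k)\ge M(x_k)$ and $M(y_k)\,\mathrm{dist}(y_k,\partial D_k)\ge 2k$, together with $M\le 2M(y_k)$ on $B(y_k,k/M(y_k))$. Since $u,v\to0$ at infinity we have $M(y_k)\to0$, while $M(y_k)|y_k|\to\infty$.

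We rescale with $\mu_k:=M(y_k)^{-1}$, which tends to infinity but satisfies $\mu_k\ll|y_k|$:
\[
\tilde u_k(\xi)=\mu_k^{\bar\alpha_1}u(y_k+\mu_k\xi),\qquad \tilde v_k(\xi)=\mu_k^{\bar\alpha_2}v(y_k+\mu_k\xi),\qquad |\xi|<k.
\]
Using $\bar\alpha_1+2=p_1\bar\alpha_2$, $\bar\alpha_2+2=p_2\bar\alpha_1$ and the homogeneity of $\mathcal M^+$, we obtain
\[
-\mathcal M^+(D^2\tilde u_k)+\mu_k^{\,2+\bar\alpha_1-q(\bar\alpha_1+1)}|\nabla\tilde u_k|^q=c\tilde v_k^{p_1},
\]
and similarly for $\tilde v_k$ with exponent $2+\bar\alpha_2-q(\bar\alpha_2+1)$. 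Now $2+\alpha-q(\alpha+1)\le0$ if and only if $\alpha\ge\beta$. Since $p_1\ge p_2$ gives $\bar\alpha_1\ge\bar\alpha_2\ge\beta$, both exponents are $\le0$. Because $\mu_k\to\infty$, each gradient coefficient either tends to $0$ or, in the equality case, stays equal to $1$. The rescaled functions are bounded by $2$ and satisfy $\tilde M_k(0)=1$. The $W^{2,p}$/$C^{1,\alpha}$ estimates of \cite{nornberg2019c1,swiech2020pointwise}, applicable because the gradient term has bounded coefficients, together with Arzel\`a--Ascoli and the stability of viscosity solutions, give a nonnegative entire limit $(\bar u,\bar v)$ with $\bar u(0)^{1/\bar\alpha_1}+\bar v(0)^{1/\bar\alpha_2}=1$. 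By the strong maximum principle both components are positive; if one vanished, the equation would force the other to vanish as well.

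The limit problem has one of three forms. If $\bar\alpha_2>\beta$, it is the Lane--Emden-type Pucci system $-\mathcal M^+(D^2\bar u)=c\bar v^{p_1}$, $-\mathcal M^+(D^2\bar v)=d\bar u^{p_2}$ in $\mathbb R^n$. If $\bar\alpha_2=\beta<\bar\alpha_1$, it is system \eqref{5.31}, which Theorem \ref{thm5} excludes. If $\bar\alpha_1=\bar\alpha_2=\beta$, forcing $p_1=p_2$, both gradient terms survive. For the first case we rely on the supersolution Liouville theorem of Quaas--Sirakov \cite{quaas2009existence}, valid when $\bar\alpha_1\ge\widetilde n_+-2$. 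Alternatively, the argument of Theorem \ref{thm5} applies verbatim with the gradient term deleted: the $m_u,m_v$ comparison bounds, monotonicity of $R^{\widetilde n_+-2}m_u$, Lemma \ref{lem6}, and the logarithmic improvement in the critical case. For the third case we run the same proof as Theorem \ref{thm5}, using \eqref{5.4} in both equations; note that $p_1p_2>q$ holds here. Each case yields a contradiction, which gives $M(x)\le C|x|^{-1}$ and hence the stated bounds.

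The main obstacle is the limit-problem step, in particular the borderline cases $\bar\alpha_1=\widetilde n_+-2$ and the doubly critical case in which both gradient terms persist. There Theorem \ref{thm5}, an argument on the whole space $\mathbb R^n$ rather than an exterior domain, must be checked to survive the extra gradient term in the $u$-equation. A secondary technical point is making sure the rescaled domains exhaust $\mathbb R^n$, which relies on the doubling lemma's distance control, and that the regularity estimates are uniform in $k$.
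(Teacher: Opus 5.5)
Your outline matches the paper's: the doubling lemma of \cite{Polavcik}, rescaling by $\bar\alpha_1,\bar\alpha_2$, and exclusion of the limit by \cite{quaas2009existence} when $\bar\alpha_2>\beta$ and by Theorem~\ref{thm5} when $\bar\alpha_2=\beta$. Working on annuli instead of the balls $B(z,|z|/2)$ makes no difference. However, \textbf{the compactness step does not go through as you set it up}, because you drop the gradient terms from the doubling quantity. The paper uses $N=u^{1/\bar\alpha_1}+v^{1/\bar\alpha_2}+|\nabla u|^{1/(\bar\alpha_1+1)}+|\nabla v|^{1/(\bar\alpha_2+1)}$. With that $N$, the normalization gives $|\nabla\tilde u_k|\le 2^{\bar\alpha_1+1}$ and $|\nabla\tilde v_k|\le 2^{\bar\alpha_2+1}$ on $B_k$. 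Each rescaled equation then reads $-\mathcal M^+(D^2\tilde u_k)=g_k$ with $g_k$ bounded in $L^\infty$ uniformly in $k$, and the $C^{1,\alpha}$ theory for the Pucci operator with bounded right-hand side gives compactness for every $q>1$. With your $M$ you only control $\tilde u_k,\tilde v_k$ in $L^\infty$. A bounded coefficient $\mu_k^{2+\bar\alpha_1-q(\bar\alpha_1+1)}$ in front of $|\nabla\tilde u_k|^q$ does not make that term bounded. You would need a local gradient estimate for $-\mathcal M^+(D^2w)+\epsilon|\nabla w|^q=f$ that depends only on $\|w\|_\infty,\|f\|_\infty$ and is uniform as $\epsilon\to0$. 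For $q\le 2$, natural-growth regularity can supply this. But the lemma allows every $q>q_c$, in particular $q>2$ (then $\beta<0$ and $\bar\alpha_2\ge\beta$ holds automatically). For superquadratic growth no such estimate is available from \cite{nornberg2019c1,swiech2020pointwise}: the coercive Lasry--Lions type bounds behave like $(\epsilon\, d)^{-1/(q-1)}$ and blow up as $\epsilon\to0$. Without equicontinuity you cannot keep the normalization at $0$ in the limit, so you lose nontriviality of $(\bar u,\bar v)$. The remedy is the paper's: include the gradients in the doubling quantity. The price is that you must first show $|\nabla u|,|\nabla v|\to0$ at infinity, so that $N$ is bounded, $N\to 0$, and the scaling factor tends to $\infty$. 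The paper gets this from local gradient bounds at unit scale, where the coefficient of $|\nabla u|^q$ is fixed, together with the translation and strong maximum principle argument of Lemma~\ref{lem3}.

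Separately, \textbf{your third case is vacuous}, so the ``main obstacle'' you anticipate does not arise. Since $\beta=\frac{1}{q-1}-1$ is decreasing in $q$ and equals $\widetilde n_+-2$ at $q=q_c$, the hypothesis $q>q_c$ gives $\beta<\widetilde n_+-2\le\bar\alpha_1$. Hence $2+\bar\alpha_1-q(\bar\alpha_1+1)<0$ always, and the gradient term in the $u$-equation always disappears in the limit. When $\bar\alpha_2=\beta$ the limit system is exactly \eqref{5.31}, to which Theorem~\ref{thm5} applies as stated. Your suggested treatment of the doubly critical case would not have worked anyway. The proof of Theorem~\ref{thm5} needs the first equation to be a pure Pucci inequality, both for the monotonicity of $R^{\widetilde n_+-2}m_u(R)$ taken from \cite{Alessandra} and for the lower bound $m_u\ge CR^{-(\widetilde n_+-2)}$ and its logarithmic improvement.
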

\begin{proof}
We assume that both $u$ and $v$ tend to zero as $|x|\to\infty,$ that is,
\begin{equation}\label{5.2}
\lim_{|x|\to+\infty} v(x)=\lim_{|x|\to+\infty} u(x)=0,
\end{equation}
implies that $u,v$ are bounded. By using the local $C^{1,\alpha}$ estimate for bounded solution we get $|Du|$ and $|Dv|$ are bounded in $\mathbb{R}^n \setminus B_{2R_0},$ see \cite{nornberg2019c1}. Moreover, by repeating the argument as in the proof of Lemma~\ref{lem3}, we can show that 
\[
\lim_{|x|\to+\infty} |\nabla u(x)|=\lim_{|x|\to+\infty}| \nabla v(x)|=0.
\]
We claim that if $(u,v)$ is a positive solution of \eqref{5.1} satisfying \eqref{5.2}, then we can find a constant $C>0$ such that for all $z$ with $|z|>2R_0$ and $x\in B\bigl(z,|z|/2\bigr)$ we have
\[
\begin{cases}
u(x)\le Cd_z(x)^{-\bar{\alpha}_{1}}\\v(x)\le Cd_z(x)^{-\bar{\alpha}_{2}}
\end{cases}
~~\text{and}~~\begin{cases}
|\nabla u(x)|\le Cd_z(x)^{-(\bar{\alpha}_{1}+1)}\\
|\nabla v(x)|\le C\, d_z(x)^{-(\bar{\alpha}_{2}+1)},
\end{cases}
\]
where $d_z(x):=\operatorname{dist}(x,\partial B(z,|z|/2)).$ We define a function as
\[
N(x)=u(x)^{\frac{1}{\bar{\alpha}_{1}}}+v(x)^{\frac{1}{\bar{\alpha}_{2}}}+|\nabla u(x)|^{\frac{1}{\bar{\alpha}_{1}+1}}
+|\nabla v(x)|^{\frac{1}{\bar{\alpha}_{2}+1}}.\]
By the definition of $N$, it suffices to show that there exists a constant $C>0$ such that
\[
N(x)d(x)\le C,
\]
for all $|z|>2R_0$ and $x\in B\left(z,\frac{|z|}{2}\right).$ Now suppose, by contradiction, that the claim is false. Then there exist sequences
$\{z_k\}_{k\ge1}$ and $\{y_k\}_{k\ge1}$ such that
\[
|z_k|>2R_0,~~y_k\in B\left(z_k,\frac{|z_k|}{2}\right),~~\text{and}~~N(y_k)>2kd^{-1}_{z_k}(y_k),
\]
for every $k\ge1$.
By Lemma~5.1 in \cite{Polavcik}, there exists a sequence $\{x_k\}_{k\ge1}$ such that \[x_k\in B(z_k,|z_k|/2),~~ N(x_k)>2kd^{-1}_{z_k}(x_k),~~\text{and}~~ N(y)\leq 2 N(x_k)~~\text{for}~~|y-x_k|\leq kN^{-1}(x_k).
\]
Since $u$, $v$, $|\nabla u|$ and $|\nabla v|$ are bounded we have $2kd^{-1}_{z_k}(x_k)\le C,$ that is, $2k\le Cd_{z_k}(x_k),$  hence $d_{z_k}(x_k)\to+\infty$ as $k\to+\infty$.
Therefore $|x_k|\to+\infty$, which implies
$N(x_k)\to0$ as all the functions involved in the definition of $N$ goes to zero at infinity. Set
$\varrho_k:=N(x_k)^{-1}\to+\infty,$ and define for $y\in B_k:=B(0,k)$,
\[
\begin{cases}
\bar u_k(y)=\varrho_k^{\bar{\alpha}_{1}}u(x_k+\varrho_k y),\\
\bar v_k(w)=\varrho_k^{\bar{\alpha}_{2}}v(x_k+\varrho_k y).
\end{cases}
\]
These functions are well defined and satisfy
\begin{equation}\label{5.8}
\bar u_k^{\frac1{\bar{\alpha}_{1}}}
+\bar v_k^{\frac1{\bar{\alpha}_{2}}}
+|\nabla \bar u_k|^{\frac1{\bar{\alpha}_{1}+1}}
+|\nabla \bar v_k|^{\frac1{\bar{\alpha}_{2}+1}}
\le 2~~\text{in}~~B_k,
\end{equation}
since $N(x)\le 2N(y_k)$ and
\[
\bar u_k^{\frac1{\bar{\alpha}_{1}}}(0)
+\bar v_k^{\frac1{\bar{\alpha}_{2}}}(0)
+|\nabla \bar u_k(0)|^{\frac1{\bar{\alpha}_{1}+1}}
+|\nabla \bar v_k(0)|^{\frac1{\bar{\alpha}_{2}+1}}
=1.
\]
Moreover, $(\bar u_k,\bar v_k)$ satisfies
\begin{equation}\label{5.9}
\begin{cases}
-\mathcal{M}^+(D^2\bar u_k)+\varrho_k^{\bar{\alpha}_{1}+2-(\bar{\alpha}_{1}+1)q}|\nabla\bar u_k|^q=c\bar v_{k}^{p_1}&\text{in}~~B_{k},\\
-\mathcal{M}^+(D^2\bar v_k)+\varrho_k^{\bar{\alpha}_{2}+2-(\bar{\alpha}_{2}+1)q}|\nabla\bar v_{k}|^q=d\bar u_{k}^{p_2}&\text{in}~~B_{k}
\end{cases}
\end{equation}
Our remaining analysis is divided into two cases based on the inequalities satisfied by $\bar{\alpha}_{2}.$
\begin{enumerate}
\item{}\textbf{$\bar{\alpha}_{2}>\frac{2-q}{q-1}:$} In this case $\bar{\alpha}_{1}+2-(\bar{\alpha}_{1}+1)q<0,$~$\bar{\alpha}_{2}+2-(\bar{\alpha}_{2}+1)q<0.$ In view of \eqref{5.8} and by using $C^{1,\alpha}_{\mathrm{loc}}$-estimate in \cite{nornberg2019c1} we find that $(\bar u_k,\bar v_k)$ is locally bounded in $C^{1,{\alpha}}$. Thus passing to subsequence, if necessary, we find that
 $(\bar u_k,\bar v_k) \longrightarrow (\bar u,\bar v)~~\text{in}~~C^1_{\mathrm{loc}}(\mathbb{R}^n).$ Moreover, by stability results of viscosity solution, $(\bar u,\bar v)$ also satisfies
\begin{equation}\label{5.10}
\begin{cases}
-\mathcal{M}^+(D^2\bar u) = c\bar v^{p_1}~~~\text{in}~~ \mathbb{R}^{n},\\
-\mathcal{M}^+(D^2\bar v) = d \bar u^{p_2}~~~\text{in}~~ \mathbb{R}^{n},
\end{cases}
\end{equation}
and
\[
\bar u^{\frac1{\bar{\alpha}_{1}}}(0)
+\bar v^{\frac1\alpha_{2}}(0)
+|\nabla \bar u(0)|^{\frac1{\bar{\alpha}_{1}+1}}
+|\nabla \bar v(0)|^{\frac1{\bar{\alpha}_{2}+1}}=1.
\]
Thus $(\bar u,\bar v)$ is nontrivial. 
Since $p_{1}\ge p_{2}$ and
\[
\bar{\alpha}_{1}=\frac{2(p_{1}+1)}{p_{1}p_{2}-1}\ge \widetilde{n}_{+}-2,\]
the Liouville theorem for the Lane--Emden fully nonlinear system associated with the Pucci extremal operator implies that
\[
\begin{cases}
-\mathcal{M}^{+}_{\lambda,\Lambda}(D^{2}\bar u)=c\,\bar{v}^{p_{1}}&\text{in}~~\mathbb R^{n}\\
-\mathcal{M}^{+}_{\lambda,\Lambda}(D^{2}\bar v)=d\,\bar{u}^{p_{2}}&\text{in}~~\mathbb R^{n},
\end{cases}
\]
admits no positive entire solutions, see Theorem 1.1 \cite{quaas2009existence}.
This contradicts the fact that $(\bar u,\bar v)$ is nontrivial.
\item{}\textbf{$\bar{\alpha}_{2}=\frac{2-q}{q-1}:$} In this case, system \eqref{5.9} becomes
\begin{equation}\label{5.11}
\begin{cases}
-\mathcal{M}^+(D^2\bar u_k)+\varrho_k^{\bar{\alpha}_{1}+2-(\bar{\alpha}_{1}+1)q}|\nabla\bar{u}_k|^q=c\bar{v}_{k}^{p_1}&\text{in}~~B_{k}\\
-\mathcal{M}^+(D^2\bar{v}_k)+|\nabla \bar{v}_k|^q=d\bar u_k^{p_2}&\text{in}~~B_{k}.
\end{cases}
\end{equation}
Let $k\to+\infty$ as before, we get local $C^1$ convergence $(\bar u_n,\bar v_n) \to(\bar{u},\bar{v})$ in $C^1_{\mathrm{loc}}(\mathbb{R}^n),$
\[
\begin{cases}
-\mathcal{M}^+(D^2\bar{u})=c\bar{v}^{p_1}&\text{in}~~\mathbb{R}^n,\\
-\mathcal{M}^+(D^2\bar{v})+|\nabla\bar{v}|^q= d\bar{u}^{p_2}&\text{in}~~\mathbb{R}^n,
\end{cases}
\]
and by the same  non triviality condition holds. By Theorem~\eqref{thm5}, this is impossible, which leads to contradiction This conclude the proof of the claim and hence the lemma follows.
\end{enumerate}
\end{proof}
In the following Lemma \ref{lem10} also derive the similar estimate on the solution as in Lemma~\ref{lem9}. But here we allow the different range of exponents, which posses additional difficulties. Due to this fact limiting process becomes very difficult. For this reason, we confine our attention to radially symmetric solutions. In order to state our theorem we need two more notation. 
\[\hat{\alpha}_{1}=\frac{q(p_1+2)}{p_{1}p_{2}-q}, \quad \hat{\alpha}_{2}=\frac{q+2p_2}{p_{1}p_{2}-q}.\]
Before proceeding further we would like to make the following remark.
\begin{remark}
Note that the exponents $\hat{\alpha}_{1}$ and $\hat{\alpha}_{2}$ are not symmetric under the interchange of $p_1$ and $p_2$. This asymmetry is due to our standing assumption $p_1 \ge p_2$.
When $p_1<p_2$ the exponents
\[
\hat{\alpha}_{1}=\frac{q(p_1+2)}{p_{1}p_{2}-q},\quad
\hat{\alpha}_{2}=\frac{q+2p_2}{p_{1}p_{2}-q}
\]
must be replaced by
\[
\hat{\alpha}_{1}^{\prime}=\frac{q+2p_1}{p_{1}p_{2}-q},\quad
\hat{\alpha}_{2}^{\prime}=\frac{q(p_2+2)}{p_{1}p_{2}-q}.
\]
\end{remark}
\begin{lemma}\label{lem10}
Assume $\widetilde{n}_{+} \ge 2$, $q>q_{c}$, $\bar{\alpha}_1\geq\widetilde{n}_+-2,$ and $\bar{\alpha}_2<\beta.$ Suppose further that $\hat{\alpha}_2<\beta< \hat{\alpha}_1$. If $(u,v)$ is a bounded, positive, radially symmetric function satisfying
\begin{equation}\label{5.111}
\begin{cases}
-\mathcal{M}^+(D^2u)+|\nabla u|^{q}=cv^{p_1}&\text{in}~~\mathbb{R}^{n}\setminus B_{R_0}, \\
-\mathcal{M}^+(D^2v)+|\nabla v|^{q}=du^{p_2}&\text{in}~~\mathbb{R}^{n}\setminus B_{R_0},
\end{cases}
\end{equation}
and vanishing at infinity, then there exists $C>0$ such that 
\[u(x) \le C|x|^{-\hat{\alpha}_{1}}~~\text{and}~~v(x) \le C|x|^{-\hat{\alpha}_{2}}\]
for all $|x|$ sufficiently large.
\end{lemma}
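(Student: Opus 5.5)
The plan is to exploit the two scaling identities behind $\hat\alpha_1,\hat\alpha_2$. A direct computation shows
\[
p_1\hat\alpha_2=\hat\alpha_1+2,\qquad p_2\hat\alpha_1=q(\hat\alpha_2+1).
\]
So in the $u$-equation the diffusion $-\mathcal M^+(D^2u)$ balances $cv^{p_1}$, while in the $v$-equation the gradient term $|\nabla v|^q$ balances $du^{p_2}$. The hypotheses $\hat\alpha_1>\beta$ and $\hat\alpha_2<\beta$ say exactly that
\[
e_1:=\hat\alpha_1+2-q(\hat\alpha_1+1)<0,\qquad e_2:=q(\hat\alpha_2+1)-\hat\alpha_2-2<0 .
\]
Thus at the scale $r^{-\hat\alpha_i}$ the "other" term in each equation is of lower order. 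I would turn this into a bootstrap on the radial profiles rather than a blow-up argument. The reason is that the limiting system would mix a pure Pucci equation with a pure gradient (eikonal-type) equation, and, as noted before the lemma, no Liouville theorem for that mixed system is available.

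\textbf{Step 1 (coupled inequalities).} Since $(u,v)$ is radial, bounded and vanishes at infinity, Remark \ref{monotone} and the argument at the beginning of Lemma \ref{lem8} give $u'<0$ and $v'<0$ for large $r$, so $m_u=u$ and $m_v=v$. The test-function computation of Lemma \ref{lem2}(a) then yields, for large $R$,
\[
v^{p_1}(2R)\le C\Big(\tfrac{u(R)}{R^2}+\tfrac{u^q(R)}{R^q}\Big),\qquad u^{p_2}(2R)\le C\Big(\tfrac{v(R)}{R^2}+\tfrac{v^q(R)}{R^q}\Big).
\]
To avoid needing a reverse doubling inequality $u(R)\le Cu(2R)$, which is unclear in the presence of the gradient term, I will work with the monotone maximal quantities
\[
\tilde U(R):=\sup_{r\ge R}r^{\hat\alpha_1}u(r),\qquad \tilde V(R):=\sup_{r\ge R}r^{\hat\alpha_2}v(r).
\]
A priori these may be infinite or growing like $R^{\hat\alpha_i}$, since $u,v\le C$. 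I will apply the inequalities along dyadic radii. Using $u(R)\le R^{-\hat\alpha_1}\tilde U(R/2)$ and the two identities, they become
\[
\tilde V(2R)^{p_1}\le C\big(\tilde U(R)+R^{e_1}\tilde U(R)^q\big),\qquad \tilde U(2R)^{p_2}\le C\big(R^{e_2}\tilde V(R)+\tilde V(R)^q\big).
\]

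\textbf{Step 2 (closing the bootstrap).} Substituting the second inequality into the first gives
\[
\tilde V^{p_1p_2}\lesssim \tilde V+\tilde V^q+R^{e_1p_2}\big(\tilde V^{q}+\tilde V^{q^2}\big),
\]
evaluated at comparable radii. Because $p_1p_2>q$, the terms $\tilde V+\tilde V^q$ alone would force boundedness of $\tilde V$. The dangerous term is $R^{e_1p_2}\tilde V^{q^2}$ when $p_1p_2\le q^2$, and this is the main obstacle. I will first try an iterative improvement. Start from a crude bound $\tilde V(R)\le CR^{s_0}$, with $s_0=\hat\alpha_2$, coming from boundedness of $v$. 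Suppose that at stage $k$ one has $\tilde V(R)\le CR^{s_k}$. Then the inequality gives $\tilde V\le C\max\{1,\,R^{(e_1p_2+q^2s_k)/p_1p_2}\}$, so $s_{k+1}=(e_1p_2+q^2s_k)/(p_1p_2)$ as long as this is positive. When $q^2<p_1p_2$, or more generally whenever the affine map $s\mapsto (e_1p_2+q^2s)/(p_1p_2)$ has its fixed point below $0$, finitely many steps drive $s_k$ to $\le0$. If this contraction fails, which happens when $q^2\ge p_1p_2$, I would instead remove the $u^q/R^q$ term at its source. The idea is that $u(R)\le R^{-\beta-\delta}$ for some $\delta>0$ already makes $u^q/R^q\le u/R^2$. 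Such a preliminary decay can be sought from the $u$-equation alone: near-equality in the ODE together with the transformation $u\mapsto \frac{\lambda}{q-1}(1-e^{-(q-1)u/\lambda})$ used in Lemma \ref{lem8}, combined with the lower bound on $|v'|$ coming from $v$ being dominated by its gradient.

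\textbf{Step 3 (conclusion).} Once $\tilde V$ is bounded, the second inequality of Step 1 bounds $\tilde U$. Unwinding the definitions gives $u\le C|x|^{-\hat\alpha_1}$ and $v\le C|x|^{-\hat\alpha_2}$ for large $|x|$. The hypotheses $\bar\alpha_1\ge\widetilde n_+-2$ and $\bar\alpha_2<\beta$ should enter only to ensure consistency of the exponents, namely $p_1p_2>q$ and positivity of $\hat\alpha_i$.
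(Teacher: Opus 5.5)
Two steps fail as written, though both can be repaired inside your own framework.

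\textbf{First gap: the sup-quantities.} $\tilde U(R)=\sup_{r\ge R}r^{\hat\alpha_1}u(r)$ and $\tilde V(R)$ are nonincreasing in $R$. So each is either $+\infty$ for every $R$, or bounded near infinity, and the latter is already the conclusion. In particular the starting bound $\tilde V(R)\le CR^{\hat\alpha_2}$ ``from boundedness of $v$'' is false, so the bootstrap never starts. The phrase ``evaluated at comparable radii'' also hides a real mismatch of radii.

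The repair is to use pointwise values on a geometric sequence of radii. Since $u,v$ are eventually decreasing, the computation of Lemma~\ref{lem2} gives, for large $R$,
\[
v(4R)^{p_1}\le C\bigl(u(2R)R^{-2}+u(2R)^qR^{-q}\bigr),\qquad u(2R)^{p_2}\le C\bigl(v(R)R^{-2}+v(R)^qR^{-q}\bigr).
\]
Set $x_j:=R_j^{\hat\alpha_2}v(R_j)$ with $R_j=4^jR_*$. Using $p_1\hat\alpha_2=\hat\alpha_1+2$, $p_2\hat\alpha_1=q(\hat\alpha_2+1)$ and $e_1,e_2<0$, one gets
\[
x_{j+1}^{\,p_1p_2}\le C\,(1+x_j)^{q^2}\qquad (j\ge 0).
\]
No reverse doubling is needed: the recursion only runs outward, and monotonicity interpolates between consecutive $R_j$.

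\textbf{Second gap: the case $q^2\ge p_1p_2$.} You leave this case to a sketch that is not an argument. You also say the hypotheses $\bar\alpha_1\ge\widetilde n_+-2$, $\bar\alpha_2<\beta$ only give $p_1p_2>q$. The missing observation is that $\bar\alpha_2<\beta$ forces $p_1p_2>q^2$:
\begin{itemize}
\item $p_1p_2>1$ makes $\bar\alpha_2>0$, so $\beta>0$, i.e.\ $1<q<2$.
\item Then $\bar\alpha_2<\beta$ reads $2(p_2+1)(q-1)<(2-q)(p_1p_2-1)$.
\item If $p_1p_2\le q^2$, the right-hand side is at most $(2-q)(q-1)(q+1)$. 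This gives $2(p_2+1)<2+q-q^2<2$, which is absurd.
\end{itemize}
Hence the recursion above reads $x_{j+1}\le C(1+x_j)^{\theta}$ with $\theta=q^2/(p_1p_2)<1$. So $\sup_j x_j<\infty$ at once, which gives $v\le C|x|^{-\hat\alpha_2}$. The second inequality then gives $u\le C|x|^{-\hat\alpha_1}$, and no iteration of exponents is needed.

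Even without this observation, your diagnosis that ``the contraction fails'' is wrong. For $q^2>p_1p_2$, the starting exponent $\hat\alpha_2$ lies below the repelling fixed point of $s\mapsto(e_1p_2+q^2s)/(p_1p_2)$, precisely because $p_1p_2>q$, so $s_k\to-\infty$.

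\textbf{Comparison with the paper.} With these repairs your proof is a valid and genuinely different route. The paper instead applies the Pol\'a\v{c}ik--Quittner--Souplet doubling lemma to $N=u^{1/\hat\alpha_1}+|u'|^{1/(\hat\alpha_1+1)}+v^{1/\hat\alpha_2}$. Because $r_k/\varrho_k\to\infty$, the radial term disappears, and the limit is the one-dimensional system $-\theta(\bar u'')\bar u''=c\bar v^{p_1}$, $|\bar v'|^q=d\bar u^{p_2}$ on $\mathbb R$. This is ruled out immediately: $\bar u$ is concave, bounded and nonnegative, hence constant. So your stated reason for avoiding blow-up is mistaken in the radial case. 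What the doubling route costs is compactness of the rescaled $\bar v_k$, for which the paper needs the gradient bound of Lemma~\ref{appen01}. Your repaired argument uses only the test-function estimate and monotonicity.
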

\begin{proof}
Let us first observe that the assumptions imply $p_{1}p_{2}>q.$ Indeed, assume by contradiction that $p_{1}p_{2}\leq q.$ Then
$p_{1}p_{2}-1\leq q-1.$  Since $q>1$, multiplying by $(2-q)$ yields $(2-q)(p_{1}p_{2}-1)\leq(2-q)(q-1).$ Moreover,
\[(2-q)(q-1)<q-1<2(p_{2}+1)(q-1),\]
and therefore
\[(2-q)(p_{1}p_{2}-1)<2(p_{2}+1)(q-1).\]
Dividing by  $(q-1)(p_{1}p_{2}-1),$  we obtain $\frac{2-q}{q-1}<\frac{2(p_{2}+1)}{p_{1}p_{2}-1},$ that is, $\beta<\bar{\alpha}_{2},$ which contradicts the assumption $\bar{\alpha}_{2}<\beta.$ The argument leading to the upper bounds follows on the same line as the proof of Lemma~\eqref{lem9}, with the key difference that we now consider the radially symmetric form of system~\eqref{5.111}.
\begin{equation}\label{5.12}
\begin{cases}
-\theta\big(u^{\prime\prime}(r)\big)u^{\prime\prime}(r)-\theta\big(u^{\prime}(r)\big)(n-1)\frac{u^{\prime}(r)}{r}+|u^{\prime}|^q=cv^{p_1}&\text{in}~~r\geq R_0\\
-\theta\big(v^{\prime\prime}(r)\big)v^{\prime\prime}(r)-\theta\big(v^{\prime}(r)\big)(n-1)\frac{v'(r)}{r}+|v^{\prime}|^q=d u^{p_2}&\text{in}~~r\geq R_0.
\end{cases}
\end{equation}
Now by using $u(r),v(r)\to0$ as $r\to\infty$ and following the same argument as in the proof of Lemma \ref{lem9}, we can show that 
\[u^{\prime}(r),~v^{\prime}(r)\to0~~\text{as}~~r\to+\infty.\]
\textbf{Claim:} If  $(u,v)$ is a bounded, positive, and radial solution of \eqref{5.12}, then there exists a constant $C>0$ such that, for any $\tau>2R_0,$
\begin{equation}\label{eq:513}
\begin{cases}
u(r)\le Cd_\tau(r)^{-\hat{\alpha}_{1}},\\
|u'(r)|\le Cd_\tau(r)^{-(\hat{\alpha}_{1}+1)},\\
v(r)\le Cd_\tau(r)^{-\hat{\alpha}_{2}}.
\end{cases}
\end{equation}
where $r\in I_\tau:=\left(\tfrac{\tau}{2}, \tfrac{3\tau}{2}\right)$ and $d_\tau(r)=\min\left\{r-\tfrac{\tau}{2}, \tfrac{3\tau}{2}-r\right\}:=\mathrm{dist}(r,\partial I_\tau).$ As in Lemma \ref{lem9}, we prove this claim by method of contradiction. If no $C>0$ exists, then there exist two sequences
$\{\tau_k\}_{k\ge1}$ and $\{\sigma_k\}_{k\ge1}$ such that $\tau_k>2R_0$, $\sigma_k\in(\tau_k/2,3\tau_k/2)$ and the function 
\[N(\sigma_k):= u^{1/\hat{\alpha}_{1}}(\sigma_k)+ |u^{\prime}(\sigma_k)|^{1/(\hat{\alpha}_{1}+1)}+ v(\sigma_k)^{1/\hat{\alpha}_{2}}\] 
satisfies $N(\sigma_k)>2kd^{-1}_{\sigma_k}(\sigma_k).$ By Lemma 5.1 in \cite{Polavcik}, there exists a sequence $\{r_k\}_{k\ge1}$ with $r_k\in(\tau_k/2,3\tau_k/2)$ such that
\[N(r_k)> 2k\ d^{-1}_{\tau_k}(r_k)~~\text{and}~~N(r)~~\le 2N(r_k)~~\text{if}~~|r-r_k|\leq kN^{-1}(r_k).\]
Set $\varrho_{k}:=N^{-1}(r_k)$ and consider the following rescaled function
\[
\begin{cases}
\bar u_k(\sigma):=\varrho_{k}^{\hat{\alpha}_{1}}u(r_k+\varrho_{k}\sigma),\\
\bar{v}_k(\sigma):=\varrho_{k}^{\hat{\alpha}_{2}}v(r_k+\varrho_{k}\sigma).
\end{cases}
\]
for $|\sigma|\leq k.$ Boundedness of $N$ implies that $d_{\tau_k}(r_k)\to+\infty.$ Furthermore, as $u$, $u'$ and $v$ vanish at infinity implies $N(r_k)\to0$ as $k\to+\infty.$ Therefore, $\varrho_{k}\to+\infty$ as $k\to+\infty.$ It is also easy to see that $(\bar u_k,\bar v_k)$ satisfies 
\begin{equation}\label{5.14}
\begin{cases}
-\theta(\bar{u}^{\prime\prime}_k)\bar{u}^{\prime\prime}_k-\varrho_{k}\theta(\bar{u}^{\prime}_k)(n-1)\frac{\bar{u}^{\prime}_k}{r_k+\varrho_{k}\sigma}+\varrho_{k}^{\hat{\alpha}_1+2-q(\hat{\alpha}_1+1)}|\bar{u}^{\prime}_k|^q=c\bar {v}_{k}^{p_1}&\text{in}~~B_{k},\\
-\theta(\bar{v}^{\prime\prime}_k)\bar{v}^{\prime\prime}_k-\varrho_{k}\theta(\bar{v}^{\prime}_k)(n-1)\frac{\bar{v}^{\prime}_k}{r_k+\varrho_{k}\sigma}+\varrho_{k}^{\hat{\alpha}_2+2-q(\hat{\alpha}_2+1)}|\bar{v}^{\prime}_k|^q=d\bar {u}_{k}^{p_2}&\text{in}~~B_{k}.
\end{cases}
\end{equation}
Before taking the limit in \eqref{5.14}, we claim that $\frac{r_k}{\varrho_{k}}\longrightarrow+\infty.$  Indeed, from
\[d_{\tau_k}(r_k)=\min\left\{r_k-\frac{\tau_k}{2},\frac{3\tau_k}{2}-r_k\right\}\leq r_k~~\text{and}~~N(r_k)d_{\tau_k}(r_k)>2k\to +\infty\]
we obtain, $\frac{r_k}{\varrho_k} = N(r_k) r_k\geq N(r_k)d_{\tau_k}(r_k) \to +\infty,$ and claim follows. 
\[\bar{u}_{k}^{1/\hat{\alpha}_{1}}+\bar{}v_{k}^{1/\hat{\alpha}_{2}}+|\bar{u}^{\prime}_k|^{1/(\hat{\alpha}_{1}+1)} \le 2~~\text{for}~~ |\sigma|\leq k,\]
and
\begin{equation}\label{5.15}
\bar{u}_{k}^{1/\hat{\alpha}_{1}}(0)+\bar{v}_{k}^{1/\hat{\alpha}_{2}}(0)+|\bar{u}^{\prime}_k(0)|^{1/(\hat{\alpha}_{1}+1)}=1.
\end{equation}
From \eqref{5.14}, for every compact interval $F\subset\mathbb R,$  we have
\[0\leq\bar{u}_k(\sigma)\leq C,\quad|\bar{u}^{\prime}_k(\sigma)|\leq C,\quad 0\leq\bar{v}_k(\sigma)\leq C,\quad \sigma\in K,
\]
for some constant $C(F)$ but independent on $k.$. Now we use the first equation of \eqref{5.14}. Since $\frac{r_k}{\varrho_{k}}\to+\infty$
it follows that $\frac{\varrho_{k}}{r_k+\varrho_{k}\sigma}$  and $\frac{1}{r_k/\varrho_{k}+\sigma}\to0$ uniformly on compact subsets of $\mathbb{R}.$ Moreover, as $\hat{\alpha}_1+2-q(\hat{\alpha}_1+1)<0,$ so we get $\varrho_{k}^{\hat{\alpha}_1+2-q(\hat{\alpha}_1+1)}
\longrightarrow 0.$ Using also the uniform boundedness of $\bar{u}^{\prime}_k$ and $\bar v_k,$  the first equation of \eqref{5.14} yields
\[|\bar u_k''(\sigma)|\le C\quad\text{for }\sigma\in K,
\]
where $C$ is independent of $k.$ Hence $\bar{u}^{\prime}_k$  is locally equibounded and equicontinuous. By the Arzelà–Ascoli theorem, there exists $\bar u\in C^{1}(\mathbb R)$  such that, up to a subsequence, $\bar{u}_k\to\bar{u}~~\text{in}~~C^{1}_{\mathrm{loc}}(\mathbb R).$ Getting a uniform convergence subsequence of $\bar{v}_k$ is little hard. Because in view of \eqref{5.14} we had uniform control on $\bar{u}^{\prime}_k$ but it is not available on $\bar{v}^{\prime}_k.$ But we have uniform control on the coefficients of second equation of system \eqref{5.14}. So we can use Lemma \ref{appen01} to get the uniform control on the $\bar{v}^{\prime}_k.$ So we have 
\[\lim_{k\to+\infty}\bar v_k = \bar v~~\text{in}~~ C_{\mathrm{loc}}(\mathbb{R})\]
for some $\bar{v}\in C(\mathbb{R})$. Passing to the limit in \eqref{5.15}, we obtain
\begin{equation}\label{5.16}
\bar u^{1/\hat{\alpha}_{1}}(0)+\bar v^{1/\hat{\alpha}_{2}}(0)+|\bar u'(0)|^{1/(\hat{\alpha}_{1}+1)}=1,
\end{equation}
so $(\bar u,\bar v)$ is non trivial. Letting $k\to+\infty$ in the system \eqref{5.14} of first equation and by using
$\hat{\alpha}_{1}+2-(\hat{\alpha}_{1}+1)q<0$, we obtain
\begin{equation}\label{5.17}
-\bar u^{\prime\prime}=c \bar v^{p_1}~~\text{in}~~\mathbb{R},
\end{equation}
in the viscosity sense, by the standard regularity yields classical validity. Similarly, by passing to the limit in the second relation of \eqref{5.14} and using $(\hat{\alpha}_{2}+1)q-\hat{\alpha}_{2}-2<0$, we get
\begin{equation}\label{5.18}
|\bar v'|^q= d\bar u^{p_2}~~\text{in}~~\mathbb{R},
\end{equation}
in the viscosity sense(c.f \cite{KoikeViscosity}). We claim that this is impossible. By Equation \eqref{5.17} implies that $\bar u$, since $\bar u$ is nonnegative and bounded, so it is concave.  And it must be constant also. Hence $\bar v=0$ using \eqref{5.17}, and then \eqref{5.18} yields $\bar u=0$, which contradicts \eqref{5.16}. Therefore, \eqref{eq:513} holds. Finally, taking $r=\tau$ in \eqref{eq:513}, we conclude,
\[u(r)\le Cr^{-\hat{\alpha}_{1}},~~v(r)\le Cr^{-\hat{\alpha}_{2}},\]
for all sufficiently large $r$ and some constant $C>0$ independent of $r$.
\end{proof}
\begin{remark}\label{rem:gradient_decay}
The proofs of Lemmas~\ref{lem9} and \ref{lem10} contain, besides the decay estimates for the solutions themselves, corresponding decay estimates for their gradients. Since these estimates will be used in the
proof of Theorem~\ref{thm3}, we record them explicitly. Under the assumptions of Lemma~\ref{lem9}, the contradiction quantity is chosen as
\[N(x)=u(x)^{1/\bar\alpha_1}+v(x)^{1/\bar\alpha_2}+|\nabla u(x)|^{1/(\bar\alpha_1+1)}+|\nabla v(x)|^{1/(\bar\alpha_2+1)}.\]
Therefore the doubling argument yields
\begin{equation}\label{gradient_1_used}
\begin{cases}
|\nabla u(x)|\leq Cd_z(x)^{-(\bar\alpha_1+1)},\\
|\nabla v(x)|\leq Cd_z(x)^{-(\bar\alpha_2+1)},
\end{cases}
\end{equation}
where $d_z(x)=\operatorname{dist}\bigl(x,\partial B(z,|z|/2)\bigr).$ Taking $x=z$, we have $d_z(z)=\frac{|z|}{2},$
and consequently, after enlarging the constant if necessary,
\begin{equation}\label{gradient_2_used}
\begin{cases}
|\nabla u(z)|\leq C|z|^{-(\bar\alpha_1+1)},\\
|\nabla v(z)|\leq C|z|^{-(\bar\alpha_2+1)}
\end{cases}
\end{equation}
for all sufficiently large $|z|$. Similarly, the proof of Lemma~\ref{lem10} gives
$|u'(r)|\leq Cr^{-(\hat\alpha_1+1)}$
for every sufficiently large $r.$ Suppose now that $(u,v)$ is radial and satisfies
$u(r)\to0,~~v(r)\to0~~ \text{as}~~r\to\infty.$
By Lemma~\ref{lem4}, both components are eventually monotone decreasing. Hence there exists $R>0$ such that $u'(r)\le0,~~v'(r)\le0~~\text{for}~~r\ge R.$
Therefore, under the assumptions of Lemma~\ref{lem9},
\[-u'(r)\leq Cr^{-(\bar\alpha_1+1)},~~-v'(r)\leq Cr^{-(\bar\alpha_2+1)},~~\text{for}~~r\ge R,\]
while under the assumptions of Lemma~\ref{lem10}, $-u'(r)\leq Cr^{-(\hat\alpha_1+1)}~~\text{for}~~r\ge R.$
Finally, since $q>q_c=\frac{\widetilde n_+}{\widetilde n_+-1},$ we have $\widetilde n_+-1>\frac1{q-1}.$ Consequently,
\begin{equation}\label{gradient_3_used}
\begin{cases}
\bar\alpha_1\ge\widetilde n_+-2\quad\Longrightarrow\quad\bar\alpha_1+1>\frac1{q-1},\\
\text{and}\\
\hat\alpha_1\ge\widetilde n_+-2\quad\Longrightarrow\quad\hat\alpha_1+1>\frac1{q-1}.
\end{cases}
\end{equation}
Therefore, in each of the situations covered by Theorem~\ref{thm3}, there exists an exponent $\beta>\frac1{q-1}$ such that
\begin{equation}\label{gradient_4_used}
-u'(r)\le Cr^{-\beta}
\end{equation}
for all sufficiently large $r$. The analogous estimate also holds for $v$ whenever the corresponding
hypotheses of Lemma~\ref{lem9} are satisfied. These gradient estimates constitute the only information from the doubling arguments needed in the proof of the eventual convexity theorem proved in the next subsection.
\end{remark}

\section{Proof Theorem~\ref{thm1}, Theorem~\ref{thm2},~Theorem~\ref{thm3} and Theorem \ref{thm4}}
This section deals with the proof of main theorem by using all the results established up to now. The proof of these results are established by method of contradiction. We reach contradiction by using the upper and lower estimates established in the previous sections. In order to achieve this we use the following elementary lemma which is proved in \cite{burgos2018lioville}. But as the solutions involves here are not $C^{2}$ so we need a refined version of the mentioned lemma.
\begin{lemma}\label{lem11}
Assume $w\in W^{2,s}(R_0,+\infty)\cap C^{1}(R_0,+\infty),$ where $R_0\ge0$ and $z(r)=r^{\alpha_{1}} w(r),$ for $\alpha_{1}\in\mathbb R$ is bounded for large $r.$ Then for every measurable set $E\subset(R_0,\infty)$ with full measure, we can find a sequence $r_{k}\in E$ such that $
r_k\to\infty$ and following hold:
\begin{enumerate}
\item $\lim_{k\to+\infty} z(r_k)=\Sigma:=\liminf_{r\to+\infty} z(r),$
\item $\lim_{k\to+\infty} r_k^{\alpha_{1}+1}w'(r_k)=-\alpha_{1}\Sigma,$
\item $\lim_{k\to+\infty}r_k^{\alpha_{1}+2}w''(r_k)\ge \alpha_{1}(\alpha_{1}+1)\Sigma.$
\end{enumerate}
\end{lemma}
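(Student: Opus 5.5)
The plan is to pass to logarithmic variables, where the three conditions become statements about a bounded function and its first two derivatives. The only delicate point is that $w''$ exists only almost everywhere, so every choice of points has to be made inside the full-measure set $E$.

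\textbf{Logarithmic change of variables.} Set $t=\log r$ and $Z(t):=z(e^{t})=e^{\alpha_1 t}w(e^{t})$. Since $w\in W^{2,s}_{\mathrm{loc}}\cap C^{1}$, we have $Z\in C^{1}$, $Z'$ is locally absolutely continuous, and $Z''$ exists on the image $\tilde E=\log E$, which again has full measure. A direct computation gives
\[
Z'=\alpha_1 Z+r^{\alpha_1+1}w',\qquad Z''=\alpha_1^{2}Z+(2\alpha_1+1)\,r^{\alpha_1+1}w'+r^{\alpha_1+2}w''.
\]
Eliminating $w'$ from the second identity yields
\[
r^{\alpha_1+2}w''=Z''-(2\alpha_1+1)Z'+\alpha_1(\alpha_1+1)Z.
\]
It therefore suffices to find $t_k\in\tilde E$ with $t_k\to\infty$ such that
\[
Z(t_k)\to\Sigma,\qquad Z'(t_k)\to0,\qquad \liminf_k Z''(t_k)\ge0.
\]
Then (1) and (2) follow at once. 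For (3), the displayed identity gives $\liminf_k r_k^{\alpha_1+2}w''(r_k)\ge\alpha_1(\alpha_1+1)\Sigma$, and extracting a further subsequence gives the stated limit, possibly equal to $+\infty$.

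\textbf{Oscillating case.} Suppose $Z$ is not eventually monotone. Because $Z$ is bounded and $\Sigma=\liminf Z$, there are local minima $\tau_k\to\infty$ with $Z(\tau_k)\to\Sigma$ and $Z'(\tau_k)=0$. Fix $k$ and $\delta_k\downarrow0$. I claim that the set
\[
\{t\in(\tau_k,\tau_k+\delta_k):Z''(t)>-1/k\}
\]
has positive measure. If not, absolute continuity of $Z'$ would give $Z'(t)\le-(t-\tau_k)/k<0$ just to the right of $\tau_k$, so $Z$ would strictly decrease there, contradicting the local minimum. Pick $t_k$ in this set intersected with $\tilde E$. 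Continuity of $Z$ and $Z'$, with $\delta_k$ chosen small, then gives $|Z(t_k)-Z(\tau_k)|<1/k$ and $|Z'(t_k)|<1/k$.

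\textbf{Eventually monotone case.} Now $Z\to\Sigma$. Fix $\varepsilon>0$ and $T$. I claim the set
\[
\{t>T:|Z'(t)|<\varepsilon,\ Z''(t)>-\varepsilon\}
\]
has positive measure. Suppose instead that $Z''\le-\varepsilon$ a.e. on $\{t>T:|Z'|<\varepsilon\}$.
\begin{itemize}
\item Since $Z$ converges, there is $t_0>T$ with $|Z'(t_0)|<\varepsilon/2$.
\item While $Z'$ lies in $(-\varepsilon,\varepsilon)$ it decreases at rate at least $\varepsilon$, so it reaches $-\varepsilon/2$ in finite time.
\item Once below $-\varepsilon/2$, $Z'$ can never climb back through the band $(-\varepsilon,-\varepsilon/2)$, since it is strictly decreasing there.
\item Hence $Z'\le-\varepsilon/2$ on a half-line, which contradicts the boundedness of $Z$.
\end{itemize}
Taking $\varepsilon=1/k$ and $T=k$ and choosing $t_k$ in this set intersected with $\tilde E$ gives the required sequence.

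\textbf{Main obstacle.} The step needing care is exactly this replacement of the classical inequality "$Z''\ge0$ at a minimum" by the positive-measure arguments above, because $Z''$ is only defined almost everywhere. Once the sequence $t_k$ is in hand, setting $r_k=e^{t_k}$ and using the identities from the change of variables finishes the proof.
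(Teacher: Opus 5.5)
Your argument is correct, and it takes a genuinely different route from the paper.

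\textbf{The paper's route.} The paper stays in the variable $r$. It penalizes with $z_k(r)=z(r)+r/k$ and takes a global minimizer $\rho_k$. It uses $z'(\rho_k)=-1/k$ and $\rho_k/k\to0$, then a second-order optimality condition in measure to pick $r_k\in E$ near $\rho_k$ with $z''(r_k)\ge-1/k$. It concludes via $r^{\alpha_1+2}w''=r^2z''-2\alpha_1 rz'+\alpha_1(\alpha_1+1)z$. This is a single construction with no case split. As written, however, it has two weak points that your version avoids:
\begin{enumerate}
\item A global minimizer of $z+r/k$ need not tend to infinity: if $z$ dips below $\Sigma$ at some finite radius, $\rho_k$ stays near that dip.
\item The tolerance $z''(r_k)\ge-1/k$ only yields $r_k^2z''(r_k)\ge-r_k^2/k$, which $\rho_k/k\to0$ does not control; a tolerance $o(\rho_k^{-2})$ would be needed.
\end{enumerate}

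\textbf{What your route buys.} Your logarithmic variable absorbs the weight, since $Z''=rz'+r^2z''$. Hence the identity $r^{\alpha_1+2}w''=Z''-(2\alpha_1+1)Z'+\alpha_1(\alpha_1+1)Z$ has bounded coefficients, and $Z''(t_k)>-1/k$ suffices. Your selection also makes $t_k\to\infty$ automatic: you work near local minima in the oscillating case, and use the positive-measure argument on $\{|Z'|<\varepsilon\}$ in the eventually monotone case, where there may be no minima at all. The cost is the case distinction.

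\textbf{Two details to add.} Each needs only a line.
\begin{enumerate}
\item For an arbitrary full-measure $E$, $Z''$ need not exist on all of $\log E$. Intersect $E$ with the set where $w'$ is differentiable. Your selection sets already consist only of such points, so nothing else changes.
\item When $Z$ is not eventually monotone, justify the existence of local minima $\tau_k\to\infty$ with $Z(\tau_k)\to\Sigma$.
\begin{itemize}
\item If $\limsup Z>\Sigma$, pick $s_1<s_2<s_3$ far out with $Z(s_1),Z(s_3)>c>Z(s_2)$ for a level $c$ slightly above $\Sigma$, and minimize over $[s_1,s_3]$.
\item If $Z\to\Sigma$, note that a continuous function with no interior local minimum on $(T,\infty)$ satisfies $Z(y)\ge\min\{Z(x),Z(u)\}$ for $x<y<u$. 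Such a function is eventually monotone, which is excluded in this case.
\end{itemize}
\end{enumerate}
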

\begin{proof} Since $w\in W^{2,s}_{\rm loc}(R_0,\infty)\cap C^1(R_0,\infty),$ so $z(r)=r^{\alpha_{1}} w(r)\in W^{2,s}_{\mathrm{loc}}(R_0,\infty)\cap C^1(R_0,\infty).$ Therefore $z^{\prime}$ is locally absolutely continuous, consequently we have $z^{\prime\prime}$ exists almost everywhere (on Lebesgue points) and we can evaluate $z^{\prime\prime}$ on this set as well as it has full measure in view of $z^{\prime\prime}\in L^{s}_{\mathrm{loc}}.$ Since $\Sigma=\liminf_{r\to\infty}z(r)>-\infty,$ there exists $R_\Sigma>R_0$ such that \[z(r)\ge \Sigma-1\forall~~r\geq R_{\Sigma}.\] For each integer $k\geq1,$ define 
\[z_k(r)=z(r)+\frac{r}{k}.\] 
For $r\geq R_\Sigma,$ $z_k(r)\geq\Sigma-1+\frac{r}{k}.$ As $\frac{r}{k}\to+\infty,$ so $z_k(r)\to+\infty$ with $r\to\infty.$ Thus, $z_k$ is coercive at infinity and $z_k\in C(R_0,\infty).$ Consequently, there exists $\rho_k\in(R_0,\infty)$ such that
\[z_k(\rho_k)=\min_{r\ge R_0}z_k(r).\]
Moreover, $\rho_k\to\infty.$ If not then, after passing to a subsequence, $\rho_k\to\rho<\infty.$
Fix any \(R>\rho+1\). By definition of $\Sigma,$ there exists a sequence $s_k\to\infty$ such that $z(s_k)\to\Sigma.$ For every $k,$ 
\[z_k(\rho_k)\leq z_k(s_k)=z(s_k)+\frac{s_k}{k}.\]
Choose $s_k$ so that
\[z(s_k)\le\Sigma+\frac{1}{k},\quad s_k=o(k).\]
Then $z_k(\rho_k)\leq\Sigma+\frac{1}{k}+\frac{s_k}{k}.$ Hence
\[\limsup_{k\to\infty}z_k(\rho_k)\leq\Sigma.\]
But $z_k(\rho_k)=z(\rho_k)+\frac{\rho_k}{k},$ and $\rho_k\to\rho,$ thus
$z(\rho)\leq\Sigma.$ This contradicts the definition of $\Sigma,$ since $\rho$ is finite. Thus, $\rho_k\to\infty.$
As $z_k\in C^1 $ and $\rho_k$ is an interior minimizer so $z_k'(\rho_k)=0.$ Consequently, 
\[z'(\rho_k)=-\frac{1}{r^2k}.\]
Also as $z_k(\rho_k)=z(\rho_k)+\frac{\rho_k}{k}$ remains bounded and $z(\rho_k)\to\Sigma,$ we necessarily have $\frac{\rho_k}{k}\to0.$ Thus we also have $\rho_kz'(\rho_k)\to0.$ Now differentiating $z=r^\alpha_{1} w$ and setting $r=\rho_{k}$ and rearranging the term we have
\[\rho_k^{\alpha_{1}+1}w'(\rho_k)=\rho_kz'(\rho_k)-\alpha_{1} z(\rho_k).\]
Taking $k\to \infty,$ we have
\[\rho_k^{\alpha_{1}+1}w'(\rho_k)\to-\alpha_{1}\Sigma.\]
As $z$ is Sobolev function so $z^{\prime\prime}$ may not be defined at $\rho_{k}.$ Set 
\[F=\{r:z^{\prime\prime}(r)~~\text{exists}\}.\]  
 As $F$ has full measure. So $E\cap F$ also has full measure as $z''\in L^s_\mathrm{loc}$ by Lebesgue differentiation theorem. Since $\rho_k$ minimizes $z_k,$ the generalized second-order optimality condition yields
\[\operatorname*{ess\,liminf}_{r\to\rho_n}z_k''(r)\ge0.\]
In other word, for every $\epsilon>0,$
\[\left|\left\{r:|r-\rho_n|<\delta,z_k''(r)\ge-\varepsilon\right\}\right|>0\]
for sufficiently small $\delta.$ As $E\cap F$ has full measure, so we can choose $r_k\in(E\cap F)\cap(\rho_k-1/k,\rho_k+1/k)$
such that 
\[z^{\prime\prime}(r_k)\geq-\frac{1}{k}.\]
Moreover, as a consequence of continuity of $z$ and $z^{\prime}$ we also have 
\[z(r_k)-z(\rho_k)\to0,~\quad~\text{and}~\quad r_kz'(r_k)-\rho_kz'(\rho_k)\to0.\]
Thus, $z(r_k)\to\Sigma,$ and $r_kz^{\prime}(r_k)\to0.$\\
Now differentiating $z=r^{\alpha}w$ two times, rearranging the term and setting $r=r_{k},$ we have
\[r_k^{\alpha_{1}+2}w''(r_k)=r_k^2z''(r_k)-2\alpha_{1} r_kz'(r_k)+\alpha_{1}(\alpha_{1}+1)z(r_k).\]
Finally, using 
\[z''(r_k)\ge-\frac{1}{k},~~~r_kz'(r_k)\to0~~\text{and}~~z(r_k)\to\Sigma,\]
we conclude
\[\liminf_{k\to\infty}r_k^{\alpha_{1}+2}w''(r_k)\geq\alpha_{1}(\alpha_{1}+1)\Sigma.\]
This completes the proof.
\end{proof}
\subsubsection{Proof Theorem~\ref{thm1}}
\begin{proof}
Suppose that system \eqref{main} admits a positive supersolution $(u,v)$.  
Fix $\varepsilon>0$. By Lemma \eqref{lem4}, there exists a positive, bounded and radially symmetric solution $(w,z)$ of
\begin{equation}\label{6.3}
\begin{cases}
-\mathcal{M}^+(D^2 w)+|\nabla w|^{q}=\lambda_1(\mu_1-\varepsilon)z^{p_1}&\text{in}~~\mathbb{R}^{n}\setminus B_{R_1},\\
-\mathcal{M}^+(D^2 z)+|\nabla z|^{q}=\lambda_2(\mu_2-\varepsilon)w^{p_2}&\text{in}~~\mathbb{R}^{n}\setminus B_{R_1},
\end{cases}
\end{equation}
for  $R_1>R_0$.
\begin{enumerate}
\item{} As this part is concerned with $p_{1}p_{2}\le q^{2}.$  If $p_{1}p_{2}<q^{2},$ then by Lemma~\ref{lem8}(a), the auxiliary radial problem admits no positive bounded solutions, which is not possible. Thus we assume $p_{1}p_{2}=q^{2}.$ In this critical case, Lemma~\ref{lem5} gives the lower estimate $w(r)\ge Cr^{-\beta}$ for all sufficiently large $r,$ where $\beta=\frac{2-q}{q-1}.$ On the other hand, by Lemma~\ref{lem8}(b), we obtain an exponential upper decay of the form
\[w(r)\leq C_{1}r^{-q}e^{-C_{2}r}~~\text{for sufficiently large}~~~r,\]
and some constants $m>0,$  $C_{1}>0$ and $C_{2}>0.$  Therefore,
\[Cr^{-\beta}\leq w(r)\leq C_{1}r^{-q}e^{-C_{2}r}~~\text{for large values of}~r.\]
This is impossible, since the right-hand side decays exponentially as $r\to+\infty,$  while the left-hand side decays only polynomially. More precisely, $r^{\beta-q}e^{-C_{2}r}\to0$ as~~$r\to+\infty,$ 
and hence for $r$ sufficiently large,
\[C_{1}r^{-q}e^{-C_{2}r}<Cr^{-\beta},\]
which contradicts the previous estimate. Thus the case $p_{1}p_{2}=q^{2}$  is also impossible and completes the proof of part $(a).$
\item{} Assume that $p_{1}p_{2}>q^{2}$ and 
$\alpha_{1}>\beta.$ Suppose by contradiction that system \eqref{main} admits a positive supersolution $(u,v)$ which does not blow up at infinity. Since $(u,v) $is only a supersolution, we cannot directly apply the radial estimates of Lemma~\ref{lem5} and Lemma~\ref{lem8}, which require bounded positive radial solutions of the associated auxiliary problem. To overcome this difficulty, we use Lemma~\ref{lem4}. It ensures that, for every fixed $\varepsilon>0,$ there exists a positive, bounded,
radially symmetric solution \((w,z)\) of the auxiliary system
\begin{equation}\label{6.3b}
\begin{cases}
-\mathcal{M}^{+}(D^{2}w)+|\nabla w|^{q}=\lambda_{1}(\mu_{1}-\varepsilon)z^{p_{1}}
&\text{in }\mathbb{R}^{n}\setminus B_{R_{1}},\\
-\mathcal{M}^{+}(D^{2}z)+|\nabla z|^{q}=\lambda_{2}(\mu_{2}-\varepsilon)\,w^{p_{2}}&\text{in}~~\mathbb{R}^{n}\setminus B_{R_{1}},
\end{cases}
\end{equation}
for some $R_{1}>R_0$. Since $(w,z)$ is positive, bounded, and radially symmetric, we may now apply the ODE-type estimates which already established earlier. First, by Lemma~\ref{lem5}, we obtain the universal lower estimate $w(r)\ge C_{1}\,r^{-\beta}$ for all sufficiently large $r,$ where $\beta=\frac{2-q}{q-1}.$ On the other hand, since \(p_{1}p_{2}>q^{2}\), Lemma~\ref{lem8}(c)
applies and yields the upper decay estimate $w(r)\le C_{2}\,r^{-\alpha_{1}}$ for all sufficiently large $r.$ Therefore, 
\[C_{1}r^{-\beta}\leq w(r)\leq C_{2}r^{-\alpha_{1}},\] for sufficiently large values of $r.$ Since $\alpha_{1}>\beta,$ the function \(r^{-\alpha_{1}}\) decays strictly faster than $r^{-\beta}.$  Indeed, $r^{\beta-\alpha_{1}}\to0~~\text{as}~~r\to+\infty.$ Hence, for sufficiently large $r,$ $C_{2}r^{-\alpha_{1}}<C_{1}r^{-\beta},$ which contradicts the previous double inequality.This contradiction shows that no positive supersolution of \eqref{main} which does not blow up at infinity can exist.
\item{} Assume $p_1p_2>q^2,$ $\alpha_{1}=\alpha_{2}=\beta$ hold. Despite that system \eqref{main} admits a positive supersolution
$(u,v) $ which does not exhibit blow-up behaviour at infinity. By Lemma~\ref{lem4}, for every fixed $\varepsilon>0,$
there exists a positive bounded radially symmetric supersolution
$(w,z)$ satisfying
\[
\begin{cases}
-\mathcal M^+_{\lambda,\Lambda}(D^2w)+|\nabla w|^q=\lambda_1(\mu_{1}-\varepsilon)z^{p_1},\\
-\mathcal M^+_{\lambda,\Lambda}(D^2z)+|\nabla z|^q=\lambda_2(\mu_{2}-\varepsilon)w^{p_2},
\end{cases}
\]
for sufficiently large $r.$ Furthermore we have $w,z\in W^{2,s}_{loc}(R_1,\infty)\cap C^{1}(R_1,\infty).$ Thus $w$ is twice differentiable on a full measure set. So while applying Lemma~\ref{lem11} we will be choosing the sequence from the set where $w$ is twice differentiable. By Lemma~\ref{lem5} and Lemma~\ref{lem8},
there exist positive constants $C_1,C_2$ such that
\[
\begin{cases}
C_1r^{-\alpha_{1}}\leq w(r)\le C_2r^{-\alpha_{1}},\\
C_1r^{-\alpha_{2}}\leq z(r)\le C_2r^{-\alpha_{2}},
\end{cases}
\]
for sufficiently large $r.$ So we can define 
\[
t_1:=\liminf_{r\to\infty}r^{\alpha_1}_{1}w(r),~~
t_2:=\liminf_{r\to\infty}r^{\alpha_1}_{2}z(r),
\]
and as $w,z$ decays power like we also have $0<t_1,t_2<\infty.$
Now apply Lemma~\ref{lem11} to $w.$ This gives a sequence
$\{r_k\}_{k\geq1}\to\infty$ such that
\begin{enumerate}
\item{}$r_k^{{\alpha}_{1}}w(r_k)\to t_{1},$
\item{}$r_k^{\alpha_{1}+1}w'(r_k)\to-\alpha_{1}t_1,$
\item{}$\liminf_{k\to\infty}r_k^{\alpha_{1}+2}w''(r_k)\geq\alpha_{1}(\alpha_{1}+1)t_1.$
\end{enumerate}
Notice that last point implies that there is a natural number such that for $k\geq n,$ we have $w^{\prime\prime}(r_{k})>0.$ Also as we already observed that $w^{\prime}(r)<0.$ Thus the following hold:
\[
\begin{cases}
\theta(w^{\prime}(r_{k}))=\lambda,\\
\theta(w^{\prime\prime}(r_{k}))=\Lambda,
\end{cases}
\]
along these sequences. Moreover, $w$ is radial and twice differentiable almost everywhere, so without loss of generality, we may assume that it is twice differentiable at $r_{k}$ so we have 
\[-\Lambda w^{\prime\prime}(r_{k})-\lambda(n-1)\frac{w^{\prime}(r_{k})}{r_{k}}+|w^{\prime}(r_{k})|^q\geq\lambda_1(\mu_{1}-\varepsilon)z^{p_1}(r_{k}).\]
Multiply both sides by $r_k^{\alpha_{1}+2}$ we get
\begin{equation}\label{final1.1}
-\Lambda r_{k}^{\alpha_{1}+2} w^{\prime\prime}(r_{k})-r_{k}^{\alpha_{1}+1}\lambda(n-1)w^{\prime}(r_{k})+r_{k}^{\alpha_{1}+2}|w^{\prime}(r_{k})|^q\geq\lambda_1(\mu_{1}-\varepsilon)r_{k}^{\alpha_{1}+2}z^{p_1}(r_{k}). 
\end{equation}
Now, considering each term separately we have
\begin{enumerate}
\item $r_k^{\alpha_{1}+2}w^{\prime\prime}\geq\alpha_{1}(\alpha_{1}+1)t_1+o(1),$
\item $r_k^{\alpha_{1}+1}w^{\prime}\to-\alpha_{1}t_1,$
\item$r_k^{\alpha_{1}+2}|w^{\prime}|^q=\left(r_k^{\alpha_{1}+1}|w^{\prime}|\right)^qr_k^{\alpha_{1}+2-q(\alpha_{1}+1)}.$
\end{enumerate}
In view of $\alpha_{1}=\beta=\frac{2-q}{q-1},$ we have $\alpha_{1}+2-q(\alpha_{1}+1)=0.$ Therefore, 
\[ r_k^{\alpha_{1}+2}|w\prime|^q=\left(r_k^{\alpha_{1}+1}|w^{\prime}|\right)^qr_k^{\alpha_{1}+2-q(\alpha_{1}+1)}\to\alpha_{1}^qt_1^q.\]
Thus passing to the limit in \eqref{final1.1}, we get 
\[\Lambda\alpha_{1}(\widetilde{n}_{+}-2-\alpha_{1})t_1+\alpha_{1}^qt_1^q\geq \lambda_1\mu_{1}t_2^{p_1},\]
which after putting the value of $\widetilde{n}_{+}$ we get contradiction to the first inequality in \eqref{1.5}. Same result also holds for $z$ and this part is proved.
\item{} Assume $\alpha_{2}<\alpha_{1}=\beta$ and \eqref{1.5}.  
By Lemma \ref{lem5} and Lemma \ref{lem8}(c),
\[
\begin{cases}
C_1 r^{-\alpha_{1}}\leq w(r)\le C_2 r^{-\alpha_{1}},\\
C_1 r^{-\beta}\le z(r)\le C_2 r^{-\alpha_{2}}.
\end{cases}
\]
As a consequence we have,
\[
\begin{aligned}
-\mathcal{M}^+(D^2 z)+|\nabla z|^{q}=dw^{p_2}\geq Cr^{-p_2\alpha_{1}}=Cr^{-q(\alpha_{2}+1)},\\  
\end{aligned}
\]
which in view of Lemma~\eqref{lem7} gives $z(r)\ge Cr^{-\alpha_{2}}$. Therefore, it is natural to define
\[
t_1:=\liminf_{r\to+\infty} r^{\alpha}_{1}w(r),~~
t_2:=\liminf_{r\to+\infty} r^{\alpha}_{2}z(r),
\]
 and observe that both are finite and positive. By Lemma~\eqref{lem11}, there is two sequences $\{r_{k}\}_{k\geq1},\{s_{k}\}_{k\geq1}$ together $r_{k},s_{k}\to+\infty$ satisfying 
\[
\left\{
\begin{aligned}
&\lim_{k\to\infty}r_k^{{\alpha}_{1}}w(r_k)=t_1,\\
&\lim_{k\to\infty}r_k^{{\alpha}_{1}+1}w'(r_k)=-\alpha_{1}t_{1}\\
&\lim_{k\to\infty}r_k^{{\alpha}_{1}+2}w''(r_k)\geq\alpha_{1}(\alpha_{1}+1)t_1,
\end{aligned}
\right.
\text{and}~~~ 
\left\{
\begin{aligned}
&\lim_{k\to\infty}s_{k}^{{\alpha}_{2}}z(s_{k})=t_2,\\
&\lim_{k\to\infty}s_{k}^{{\alpha}_{2}+1}z'(s_{k})=-\alpha_{2} t_2,\\
&\lim_{k\to\infty}s_{k}^{{\alpha}_{2}+2}z''(s_{k})\ge \alpha_{2}(\alpha_{2}+1)t_2.
\end{aligned}
\right.
\]

Arguing similarly as before we obtain
\begin{equation}\label{6.4}
\alpha_{1}(\Lambda(n-1)-\lambda(\alpha_{1}+1))t_1+\alpha_{1}^q t_1^q\geq\lambda_1(\mu_{1}-\epsilon)t_2^{p_1},
\end{equation}
Now, we consider the radial form of the second equation in \eqref{6.3},
\[
-\theta(z^{\prime\prime}(r))-\theta(z^{\prime}(r))\frac{n-1}{r}z^{\prime}(r)+|z^{\prime}(r)|^{q}\geq\lambda_{2}(\mu_{2}-\epsilon)w(r)^{p_{2}}.
\]
Now evaluating the above equation at $r=s_{k},$ noting that $z^{\prime\prime}(s_{k})>0,~z'(s_{k})<0$ and multiplying by $s_{k}^{q(\alpha_{2}+1)}$
\begin{equation}
-s_{k}^{q({\alpha}_{2}+1)}\Lambda z''(s_{k})-\lambda(n-1)s_{k}^{q({\alpha}_{2}+1)-1} z'(s_{k})+ s_{k}^{q({\alpha}_{2}+1)} |z'(s_{k})|^{q}\ge\lambda_2(\mu_{2}-\varepsilon)s_{k}^{q({\alpha}_{2}+1)} w(s_{k})^{p_2}.
\end{equation}
As a consequence of conditions satisfied by the exponent in case (4), we have $q({\alpha}_{2}+1)=p_2\alpha_{1}$ and  
$q({\alpha}_{2}+1)-({\alpha}_{2}+2)<0.$ Thus the above equation can be rewritten as 
\begin{equation}\label{6.5}
s_{k}^{q(\alpha_{2}+1)-(\alpha_{2}+2)}\left(-s_{k}^{\alpha_{2}+2}\Lambda z''(s_{k})-\lambda (n-1)s_{k}^{\alpha_{2}+1} z'(s_{k})\right)+\big|s_{k}^{\alpha_{2}+1}z'(s_{k})\big|^{q}
\ge\lambda_2(\mu_{2}-\varepsilon)\big(s_{k}^{\alpha_{1}} w(s_{k})\big)^{p_2}.
\end{equation}
Note that $q({\alpha}_{2}+1)-({\alpha}_{2}+2)<0.$ Taking the limit $k \to \infty$ in \eqref{6.5}, we arrive at $s_{k}^{q({\alpha}_{2}+1)-({\alpha}_{2}+2)}\to 0~~\text{as}~~ k\to\infty.$
Hence
\[
\begin{aligned}
&\lim_{k\to\infty}\bigl|s_{k}^{{\alpha}_{2}+1}z'(s_{k}) \bigr|^{q}\ge\lim_{k\to\infty}\lambda_2(\mu_{2}-\varepsilon)
(s_{k}^{{\alpha}_{1}} w(s_{k}))^{p_2},
\end{aligned}
\]
implies
\begin{equation}\label{6.6}
{\alpha}_{2}^{q}t_2^{q}\ge \lambda_2(\mu_2-\varepsilon)t_1^{p_2}.    
\end{equation}
Now, letting $\varepsilon\to0$ in the above inequalities contradicts \eqref{1.5}. This completes the proof.\\
\end{enumerate}
\end{proof}
\subsubsection{Proof of Theorem \ref{thm2}}
\begin{proof}
Assume by contradiction that system \eqref{main} admits a positive supersolution $(u,v)$ which does not exhibit blow-up behavior at infinity.
\begin{enumerate}
\item[(a)] First assume that $p_1p_2<q_c^2.$ Choose $q'<q_c$ sufficiently close to $q_c$ such that $p_1p_2<(q')^2.$ Since $q'<q_c$, Theorem~\ref{thm1}(a) applies and yields a contradiction. Now assume that
$p_1p_2=q_c^2.$ Let $q'=q_c-\varepsilon,$ for some $\epsilon>0.$ Then $q'<q_c$ and for $\varepsilon$ sufficiently small $p_1p_2>(q')^2.$
Define
\[
\alpha_{1}'=\frac{q'(p_1+q')}{p_1p_2-(q')^2},~~\beta'=\frac{2-q'}{q'-1}.
\]
Notice that
\[\alpha_{1}'=\frac{(q_c-\varepsilon)(p_1+q_c-\varepsilon)}{\varepsilon(2q_c-\varepsilon)}=\frac{(q_c-\varepsilon)(p_1+q_c-\varepsilon)}{q_c^2-(q_c-\varepsilon)^2},\]
where we have used $p_1p_2=q_c^2$ in the first equality and $q_c^2-(q_c-\varepsilon)^2=\varepsilon(2q_c-\varepsilon)$ in the second. Thus, $\alpha_{1}'\to+\infty$ as $\varepsilon\to0^+.$ On the other hand,
\[\beta'\to\frac{2-q_c}{q_c-1}=\widetilde{n}_{+}-2,\]
where we have used $q_c=\frac{\widetilde{n}_{+}}{\widetilde{n}_{+}-1}.$
Which shows $\beta'$ stay bounded as $\varepsilon\to0^+$, while
$\alpha_{1}'\to+\infty$. Therefore, for $\varepsilon>0$ sufficiently small, $\alpha_{1}'>\beta'.$ Which leads to contradiction in view of Theorem~\ref{thm1}(b).
\item[(b)] Assume now that $p_1p_2>q_c^2$ and $\alpha_{1}>\widetilde{n}_{+}-2.$ By continuity of the functions
\[
q\mapsto \frac{q(p_1+q)}{p_1p_2-q^2},~~\text{and}~~q\mapsto \frac{2-q}{q-1}.
\]
Then there exists $q'<q_c$ sufficiently close to $q_c$ such that $p_1p_2>(q')^2$ and
\[\frac{q'(p_1+q')}{p_1p_2-(q')^2}>\frac{2-q'}{q'-1}.\]
Hence the assumptions of Theorem~\ref{thm1}(b) are satisfied, which again leads to a contradiction.
\end{enumerate}
\end{proof}
When $q>\frac{\widetilde{n}_{+}}{\widetilde{n}_{+}-1}$ and $p_1\ge p_2$, the related exponents are given as follows
\[
\beta=\frac{2-q}{q-1},~~
\bar\alpha_{1}=\frac{2(p_1+1)}{p_1p_2-1},~~ \bar{\alpha}_{2}=\frac{2(p_2+1)}{p_1p_2-1},~~\hat\alpha_{1}=\frac{q(p_1+2)}{p_1p_2-q},~~
\hat{\alpha}_{2}=\frac{q+2p_2}{p_1p_2-q}.
\]
It is easy to observe that these are related by the following set of relations
\[
\begin{cases}
\bar\alpha_{1}+2=p_1\bar{\alpha}_{2},\\
\bar{\alpha}_{2}+2 = p_2\bar\alpha_{1},
\end{cases}~~\text{and}~~
\begin{cases}
\hat{\alpha}_{1}+2 = p_1\hat{\alpha}_{2},\\
q(\hat{\alpha}_{2}+1) = p_2\hat\alpha_{1}.
\end{cases}
\]
\subsubsection{Proof of Theorem \ref{thm3}} 
\begin{proof}
Assume by contradiction that $(u,v)$ is a positive supersolution of \eqref{main} which do not blow up at infinity. By Lemma~\ref{lem4}, there exists a positive radial function $(w,z)$ satisfying
\begin{equation}\label{fi1}
\begin{cases}
-\theta\big(w''(r)\big)w''(r)-\lambda(n-1)\dfrac{w'(r)}{r}+|w'(r)|^{q}
=\lambda_{1}\varepsilon_{1}z^{p_{1}}(r),\\
-\theta\big(z''(r)\big)z''(r)-\lambda(n-1)\dfrac{z'(r)}{r}+|z'(r)|^{q}
=\lambda_{2}\varepsilon_{2}w^{p_{2}}(r).
\end{cases}
\end{equation}
Note that $w,z$ are positive radial solutions converging to zero at infinity. Now we claim that 
\begin{enumerate}
\item under hypothesis Theorem \ref{thm3}(b), $w,z$ are eventually convex.
\item under hypothesis Theorem \ref{thm3}(c), $w$ is eventually convex.
\end{enumerate}
We prove these claim as a separate result below, see Corollary\ref{evencon1}. Having assumed these claim we want to establish the lower estimate for $w$ and $z$ as follows. 
As $w^{\prime}(r)<0$ and $w$ is eventually convex therefore we also have $\theta(w^{\prime\prime})=\Lambda.$ Moreover, by noting  that $z\geq0$ the first equation  in \eqref{fi1} becomes:
\begin{equation}\label{3.s1}
-w''(r)-\frac{\widetilde n_{+}-1}{r}w'(r)+\frac1\Lambda |w'(r)|^q\ge0.~~\text{a.e. in}~~(R_0,+\infty),
\end{equation}
where we also have used  $\widetilde n_{+}-1=\frac{\lambda}{\Lambda}(n-1).$
By setting $v(r):=-w'(r),$ \eqref{3.s1} can be rewritten as follows
\begin{equation}\label{3.sf1}
v'(r)+\frac{\widetilde n_{+}-1}{r}v(r)\geq-\frac1\Lambda v(r)^q.
\end{equation}
By multiplying \eqref{3.sf1} by $-(q-1)v^{-q}$ and setting 
$Y(r)=v(r)^{1-q},$ we get
\[Y'(r)\le(q-1)\frac{\widetilde n_{+}-1}{r}Y(r)+\frac{q-1}{\Lambda}.\]
Set $\gamma:=(q-1)(\widetilde n_{+}-1).$ Since
$q>q_c=\frac{\widetilde n_{+}}{\widetilde n_{+}-1},$ we have $\gamma>1.$ Multiplying the previous inequality by the integrating factor $r^{-\gamma}$ gives
\[\left(r^{-\gamma}Y(r)\right)'\le\frac{q-1}{\Lambda}r^{-\gamma}.\]
Integrating over $[R_1,r]$ yields
\[r^{-\gamma}Y(r)-R_1^{-\gamma}Y(R_1)\le\frac{q-1}{\Lambda}
\int_{R_1}^{r}s^{-\gamma}ds\leq \frac{R_1^{1-\gamma}}{\gamma-1}=\tilde{C}_{1}. \]
Rewriting the above inequality we obtain we obtain $Y(r)\le C_1r^\gamma.$ Now noting that $1-q<0$ and $\frac{\gamma}{q-1}=\widetilde n_{+}-1$ we get
\[v(r)\geq C_2r^{-(\widetilde n_{+}-1)}.\]
Recalling that $v=-w'$, we obtain
\[|w'(r)|=-w'(r)\ge C_2r^{\,1-\widetilde n_{+}},\qquad r\ge R_1.\]
Which after integration gives 
\[w(r)\geq C_3r^{\,2-\widetilde n_{+}},\]
where $C_3=C~{\widetilde n_{+}-2}>0.$ Since the interval $[R_0,R_1]$ is compact, by decreasing the constants if necessary, the above estimates extend to all
$r\ge R_0$. Hence
\begin{equation}\label{fi2}
|w'(r)|\geq Cr^{1-\widetilde n_{+}},~~ w(r)\geq Cr^{2-\widetilde n_{+}},
\end{equation}
for every $r\ge R_0.$  Similar result also hold for $z.$ Notice that in the proof of part $(a)$ we do not need these lower bound. While in the part of $(b)$ we need these lower bound for $w,z$ both and in the part of $(c)$ we need the lower estimate for $z$ only.   
\begin{itemize}
\item[\textbf{(a).}] In this case we have $p_1p_2\le1.$ Now, choose $t>0$ such that $(p_1+t)p_2>1.$ It also implies that $z^{p_1}(r)\ge z^{p_1+t}(r)$ for sufficiently large $r,$ as $z$ is very small near infinity. Hence $(w,z)$ is also a supersolution of
\[
\begin{cases}
-\mathcal M^+_{\lambda,\Lambda}(D^2w)+|w'|^q\geq c z^{p_1+t},\\
-\mathcal M^+_{\lambda,\Lambda}(D^2z)+|z'|^q\geq dw^{p_2}.
\end{cases}
\]
Define $p_1'=p_1+t,$ and consider the corresponding exponents
\[
\bar{\alpha}_{1}'=\frac{2(p_1'+1)}{p_1'p_2-1},~~\bar{\alpha}_{2}'=\frac{2(p_2+1)}{p_1'p_2-1}.\]
Since $p_1'p_2-1>0,$ we may choose $t>0$ sufficiently small such that
\[\bar{\alpha}_{1}'>\widetilde{n}_{+}-2,~~\bar{\alpha}_{2}'>\beta.\]
Therefore part (b) applies to the perturbed system, and we get a contradiction. 
\item[\textbf{(b).}] Assume that $\bar{\alpha}_{1}\ge \widetilde{n}_{+}-2,$
and $\bar{\alpha}_{2}\ge\beta.$ So by Lemma~\ref{lem9},
\[w(r)\le C r^{-\bar{\alpha}_{1}},~~z(r)\le Cr^{-\bar{\alpha}_{2}}.\]
Combining this with \eqref{fi2} we have
\[C r^{-(\widetilde{n}_{+}-2)}\leq w(r)\leq Cr^{-\bar{\alpha}_{1}}.\]
If $\bar{\alpha}_{1}>\widetilde{n}_{+}-2,$ we immediately obtain a contradiction for large $r.$ Hence we may assume $\bar{\alpha}_{1}=\widetilde{n}_{+}-2.$
Since $p_2\bar{\alpha}_{1}=\bar{\alpha}_{2}+2,$ the second equation in \eqref{fi2} gives
\[-\mathcal M^+_{\lambda,\Lambda}(D^2z)+|z'|^q\geq Cr^{-p_2\bar{\alpha}_{1}}=
Cr^{-(\bar{\alpha}_{2}+2)}.\]
Thus Lemma~\ref{lem6} implies $z(r)\geq Cr^{-\bar{\alpha}_2},$ consequently, we have
\[
\begin{cases}
C_1 r^{-\bar{\alpha}_{1}}\leq w(r)\leq C_2 r^{-\bar{\alpha}_{1}},
\\
\text{and}\\
C_1 r^{-\bar{\alpha}_{2}}\leq z(r)\leq C_2 r^{-\bar{\alpha}_{2}}.
\end{cases}
\]
Thus as in the proof of Theorem \ref{thm1}, we define 
\[\Sigma=\liminf_{r\to\infty}r^{\bar{\alpha}_{1}}w(r).\]
By Lemma~\ref{lem11}, there exists a sequence $r_k\to\infty$ belonging to the differentiability set of $w^{\prime\prime}$ such that
\[
\left\{
\begin{aligned}
&r_k^{\bar{\alpha}_{1}}w(r_k)\to\Sigma>0,\\
&r_k^{\bar{\alpha}_1+1}w^{\prime}(r_k)\to-\bar{\alpha}_{1}\Sigma,\\
&\liminf_{k\to\infty}r_k^{\bar{\alpha}_{1}+2}w^{\prime\prime}(r_k)\geq \bar{\alpha}_{1}(\bar{\alpha}_{1}+1)\Sigma.
\end{aligned}
\right.
\]
Evaluating the first equation of \eqref{fi1}
at $r_k$ and multiplying by $r_k^{\bar{\alpha}_{1}+2}$ we get
\begin{equation}\label{fi3}
-\Lambda r_k^{\bar{\alpha}_{1}+2}w^{\prime\prime}(r_k)-\lambda(n-1)r_k^{\bar{\alpha}_{1}+1}w^{\prime}(r_k)
+r_k^{\bar{\alpha}_{1}+2}|w^{\prime}(r_k)|^q\geq cr_k^{\bar{\alpha}_{1}+2}z(r_k)^{p_1}.
\end{equation}
 Now passing to the limit as in \eqref{fi3}, as in the proof of Theorem \ref{thm1},  we get
 \begin{equation}\label{fi4}
-\Lambda\bar{\alpha}_{1}(\bar{\alpha}_{1}+1)\Sigma+\lambda(n-1)\bar{\alpha}_{1}\Sigma\geq c\,T^{p_{1}},
\end{equation}
where $T:=\liminf_{r\to\infty}r^{\bar{\alpha}_{2}}z(r)>0.$ Finally putting the values of all the exponents in \eqref{fi3}, we get that the left hand side is zero while the right hand side is positive, which leads to a contradiction.
\item[\textbf{(c).}] In this case we have $\bar{\alpha}_{2}<\beta$ and $\hat{\alpha}_{1}\ge\widetilde{n}_{+}-2.$ So by Lemma \ref{lem10}, we have
\[w(r)\le C r^{-\hat{\alpha}_{1}},\]
which in view of \eqref{fi2}, give $C r^{-(\widetilde{n}_{+}-2)}\leq w(r)\le Cr^{-\hat{\alpha}_{1}}.$ So there are two possibilities. If
$\hat{\alpha}_{1}>\widetilde{n}_{+}-2,$ we again obtain a contradiction. So we assume, $\hat{\alpha}_{1}=\widetilde{n}_{+}-2.$ In this case also by taking Lemma \ref{lem11}, into account and repeating exactly the same argument as in part $(b),$ we again get a contradiction and this complete the proof.
\end{itemize}
\end{proof}

\begin{lemma}[Eventual Convexity]\label{evencon}
Let $w$ and $z$ satisfies
\begin{equation}\label{ffi1}
-\theta\big(w''(r)\big)w''(r)-\lambda(n-1)\dfrac{w'(r)}{r}+|w'(r)|^{q}
= Cz^{p_{1}}(r)
\end{equation}
for some constant. Assume also that
\begin{equation}\label{lowerbound3}
\begin{cases}
(i)-w^{\prime}<c_{1}r^{-\gamma},\\
\text{and}\\
(ii)~~z\leq c_{2}r^{-a},
\end{cases}
\end{equation}
for some $\gamma>\frac{1}{q-1}$ and $ap_{1}>2,$ then $w$ is eventually convex.
\end{lemma}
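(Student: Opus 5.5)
The plan is to read off the sign of $w''$ directly from \eqref{ffi1}. Since $\theta>0$, we have $w''(r)\ge0$ exactly where $\theta(w'')w''\ge 0$. Writing $v:=-w'>0$ (recall $w'<0$, as established at the start of Section~\ref{lower} and in the proof of Lemma~\ref{lem8}), \eqref{ffi1} reads
\[
\theta(w''(r))\,w''(r)=\lambda(n-1)\frac{v(r)}{r}+v(r)^{q}-Cz(r)^{p_1}=:G(r)\qquad\text{a.e.}
\]
So it suffices to show that $G(r)\ge0$ for all large $r$. Because $w\in W^{2,s}_{\mathrm{loc}}\cap C^1$, the function $G$ is continuous, and this avoids evaluating $w''$ pointwise. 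The argument thus reduces to comparing the positive drift term $\lambda(n-1)v/r$ with the reaction term $Cz^{p_1}$.

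\emph{Step 1: the gradient term is lower order.} By hypothesis (i), $v^{q-1}\le c\,r^{-\gamma(q-1)}$, and $\gamma(q-1)>1$. Hence $v^q=v\cdot v^{q-1}=o(v/r)$. The term $|w'|^q$ is nonnegative in any case, so it only helps, but this bound shows the equation is asymptotically $\theta(w'')w''\approx \lambda(n-1)v/r-Cz^{p_1}$. This is exactly where the assumption $\gamma>\frac1{q-1}$ enters: it is the gradient-domination mechanism. The same bound is what controls the ratio $Y=v^{1-q}$ in the next step.

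\emph{Step 2: a lower bound for $v$.} Dropping $Cz^{p_1}\ge 0$ is not possible here, because we need a lower bound on $v$ that beats $r^{1-ap_1}$. I would therefore rerun the proof of Lemma~\ref{lem5} with the reaction term kept. Applying Lemma~\ref{inversion lemma} gives
\[
v'+\frac{\widetilde n_+-1}{r}v\ge-\frac1\Lambda v^q-\frac{C}{\Lambda}r^{-ap_1}.
\]
Then, instead of discarding the last term, I would compare $v$ with $r^{1-ap_1}$ by a level-crossing argument. This follows the template of the proof of Lemma~\ref{lem8} for $\mathcal H_1$, applied to $\mathcal G(r):=\lambda(n-1)v(r)/r-Cz(r)^{p_1}$. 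The aim is to show that once $G$ (equivalently $\mathcal G$, up to the $o(v/r)$ term) is nonnegative, it cannot become negative again, and that it must become nonnegative at some point. For the second claim, suppose $G<0$ on $[R,\infty)$. Then $w''<0$ there, so $v$ is increasing on $[R,\infty)$. This contradicts $v(r)\to0$ as $r\to\infty$ (from Lemma~\ref{lem3}, since $v>0$). Hence $G(\rho)\ge 0$ at some $\rho\ge R$, and in particular on a sequence $\rho\to\infty$.

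\emph{Step 3: no return to negative values (main obstacle).} I would argue by contradiction on a maximal interval $(\rho,\rho+\eta)$ where $G<0$ with $G(\rho)=0$, as in Lemma~\ref{lem8}. On this interval $w''<0$, so $v$ increases, i.e.\ $v'=-w''>0$. I would then compute the logarithmic derivative of $V:=r z^{p_1}/v$ a.e. The term $-v'/v$ is negative there. The term $p_1z'/z$ is nonpositive, since $z$ is decreasing (as $w$ is, by the argument at the start of Section~\ref{lower}). The remaining term is $1/r$, so $V'/V\le 1/r$. This shows that $V$ can grow at most linearly, which would be too weak, and it is where the decay $z\le c_2r^{-a}$ with $ap_1>2$ must be used. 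My first attempt would be to control $v$ from below on such intervals by $v(\rho)$, using that $v$ increases there. At $\rho$ we have $\lambda(n-1)v(\rho)/\rho\ge Cz(\rho)^{p_1}$. Combined with the monotone decrease of $z$ and $ap_1>2$, I expect the inequality $\lambda(n-1)v(r)/r\ge\lambda(n-1)v(\rho)/r\ge Cz(r)^{p_1}$ to propagate for $r$ slightly larger than $\rho$. Making this quantitative, uniformly in $\rho$, is the step I expect to be hardest. If it fails, the fallback is to obtain the needed lower bound $v\gtrsim r^{1-\widetilde n_+}$ from Step~2 and to check that $ap_1>\widetilde n_+$ holds in each application in Theorem~\ref{thm3}. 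Both routes yield $G\ge0$, hence $w''\ge0$ a.e., for $r$ large, which is the eventual convexity of $w$.
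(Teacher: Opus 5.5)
\textbf{The gap is Step~3.} It carries the whole content of the lemma, it is left open, and the mechanism you suggest for it cannot work.

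Since $\theta(w'')w''=G$ with $G$ continuous, $w\in C^2$ and $v'(\rho)=0$ at every zero $\rho$ of $G$. Hence
\[
G'(\rho)=-\lambda(n-1)\frac{v(\rho)}{\rho^{2}}-(Cz^{p_1})'(\rho).
\]
So $G$ becomes negative right after $\rho$ as soon as $-(Cz^{p_1})'(\rho)<\lambda(n-1)v(\rho)/\rho^{2}$, for instance wherever $z$ is locally flat. Neither (ii) nor monotonicity of $z$ gives such a local decay rate. Monotonicity is not even a hypothesis: nothing in the lemma says $z$ satisfies an equation. So your propagation of $\lambda(n-1)v(\rho)/r\ge Cz(r)^{p_1}$ beyond $\rho$ has no basis.

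In fact the conclusion fails under the stated hypotheses. Take $\widetilde n_+\ge3$, $q\in(q_c,2)$ and $2<b<\widetilde n_+$. Let $h=Cz^{p_1}$ be $e^{-r}$ plus bumps of height $r_k^{-b}$ and width $2^{-k}r_k^{\,b+1-\widetilde n_+}$ centred at $r_k\to\infty$. Solve $v'=-M(\lambda(n-1)v/r+v^q-h)$, where $M(s)=s/\Lambda$ for $s>0$ and $M(s)=s/\lambda$ for $s\le0$, and set $w=\int_r^\infty v$. In both sign regimes one has $-r^{\widetilde n_+-1}v^q/\Lambda\le(r^{\widetilde n_+-1}v)'\le r^{\widetilde n_+-1}h/\lambda$. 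The right-hand bound is integrable, and $v^{q-1}$ is integrable because $q>q_c$, so $v\asymp r^{1-\widetilde n_+}$. Then \eqref{ffi1} holds, (i) holds with $\gamma=\widetilde n_+-1>\frac1{q-1}$, and (ii) holds with $ap_1=b>2$. Yet $G(r_k)=-r_k^{-b}(1+o(1))<0$ for all large $k$. A decreasing staircase $z$ with long flat steps produces the same effect. So no argument using only the lemma's hypotheses can close Step~3.

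\textbf{Your fallback does work, but it proves a different statement.} The structural inversion lemma gives
\[
v'\ge-\frac{\widetilde n_+-1}{r}v-\frac1\Lambda v^{q}+\frac{C}{\Lambda}z^{p_1}.
\]
The reaction term enters with a plus sign, so, contrary to your Step~2, it can simply be dropped. Since $v^{q-1}$ is integrable by (i), $r^{\widetilde n_+-1}v$ is bounded below, so $v\ge c\,r^{1-\widetilde n_+}$ with no convexity assumption. This gives $G>0$ eventually provided $ap_1>\widetilde n_+$.

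That hypothesis is strictly stronger than $ap_1>2$, and it fails where you hoped to verify it:
\begin{itemize}
\item in the borderline cases of Theorem~\ref{thm3}, where $ap_1=p_1\bar\alpha_2=\bar\alpha_1+2=\widetilde n_+$ (and likewise with hats);
\item for the convexity of $z$ in case (a) of Corollary~\ref{evencon1}. For example, $p_1=4$, $p_2=1$, $\widetilde n_+=4$, $q=1.9$ satisfy its hypotheses, but $p_2\bar\alpha_1=10/3<4$.
\end{itemize}

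\textbf{The paper's route does not fill the gap either.} The paper claims that $l=r(-w')$ is eventually increasing. Then $-w'\ge l_0/r$, the drift $\lambda(n-1)(-w')/r\gtrsim r^{-2}$ dominates $z^{p_1}=o(r^{-2})$, and this is where $ap_1>2$ is meant to enter. But the paper's first claim contains a sign slip:
\[
-w'+\frac{\lambda(n-1)}{\Lambda}w'=(2-\widetilde n_+)(-w'),\quad\text{not}\quad \Bigl(1+\frac{\lambda(n-1)}{\Lambda}\Bigr)(-w').
\]
For $\widetilde n_+\ge3$ the monotonicity of $l$ therefore does not follow. Indeed, $-w'\ge l_0/r$ directly contradicts (i) whenever $\gamma>1$, for instance whenever $q<2$.
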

\begin{proof}
We start by observing that we can rewrite \eqref{ffi1} as 
\begin{equation}\label{wfi121}
w''(r)=M\!\left(-\lambda(n-1)\frac{w'(r)}{r}+|w'(r)|^{q}-\lambda_{1}\varepsilon_{1}z^{p_{1}}(r)\right),
\end{equation}
where 
\[M(s)=\begin{cases}
\frac{s}{\Lambda}, & s>0,\\
\frac{s}{\lambda}, & s\leq0,
\end{cases}\]
We prove this proposition in four steps. Each step is mentioned as a claim. Note that as solution $w$ is decreasing so $w^{\prime}<0.$ Define $l(r):=r(-w^{\prime}(r)).$ Then it is easy to see that $l\in W^{1,s}_{\mathrm{loc}}(R_{0},\infty).$
\begin{itemize}
 \item[\textbf{Claim 1:}] 
 \begin{equation}\label{eq:mprime_estimate}
 l'(r)\geq\left(1+\frac{\lambda(n-1)}{\Lambda}\right)(-w'(r))-\frac{r}{\Lambda}|w'(r)|^{q}+\frac{\lambda_{1}\varepsilon_{1}}{\Lambda}rz(r)^{p_{1}}~~\text{a.e. in}~~(R_{0},\infty).   
 \end{equation}
Since $m(r)=r(-w'(r)),$ we have
\begin{equation}\label{eq:mprime_identity}
l'(r)=-w'(r)-rw''(r)~~~\text{a.e. in }(R_{0},\infty).
\end{equation}
On the other hand, first equation in \eqref{ffi1} can be rewritten as
\[\theta(w'')w''=-\lambda(n-1)\frac{w'}{r}+|w'|^{q}-\lambda_{1}\varepsilon_{1}z^{p_{1}}.\]
Now applying \eqref{inversion lemma} with $f(r)=-\lambda(n-1)\frac{w'(r)}{r}+|w'(r)|^{q}
-\lambda_{1}\varepsilon_{1}z(r)^{p_{1}},$ we find that 
\[w''(r)\leq\frac1{\Lambda}\left(-\lambda(n-1)\frac{w'(r)}{r}+|w'(r)|^{q}-\lambda_{1}\varepsilon_{1}z(r)^{p_{1}}\right)\]
for almost every $r>R_{0}.$ Or 
\[-rw''(r)\geq\frac{\lambda(n-1)}{\Lambda}w'(r)-\frac{r}{\Lambda}|w'(r)|^{q}
+\frac{\lambda_{1}\varepsilon_{1}}{\Lambda}r z(r)^{p_{1}}.\]
Substituting this estimate into \eqref{eq:mprime_identity} and rearranging the term we get
\[
\begin{aligned}
l'(r)&=-w'(r)-rw''(r)\\
&\geq-w'(r)+\frac{\lambda(n-1)}{\Lambda}w'(r)-\frac{r}{\Lambda}|w'(r)|^{q}+\frac{\lambda_{1}\varepsilon_{1}}{\Lambda}r z(r)^{p_{1}}\\
&=\left(1+\frac{\lambda(n-1)}{\Lambda}\right)(-w'(r))-\frac{r}{\Lambda}|w'(r)|^{q}
+\frac{\lambda_{1}\varepsilon_{1}}{\Lambda}
r z(r)^{p_{1}},
\end{aligned}
\]
which is precisely \eqref{eq:mprime_estimate}.
 \item[\textbf{Claim 2:}]There exists an $R_{1}\geq R_{0}$ such that 
 \begin{equation}\label{step2}
 l'(r)\ge\frac{1}{2}\left(1+\frac{\lambda(n-1)}{\Lambda}
\right)(-w'(r))>0~~\text{a.e}~~r\ge \tilde{R}_1.
 \end{equation}
Observe that 
\[r|w'(r)|^q=\bigl(r|w'(r)|^{q-1}\bigr)(-w'(r)),\]
and taking note that $-w^{\prime}\leq cr^{-\beta},$ we find 
\[r|w'(r)|^{q-1}\leq Cr^{1-\beta(q-1)}.\]
Thus in view of choice of $\beta$ we have $1-\beta(q-1)<0.$ Consequently, if we choose $r$ large enough such that 
\[\frac1{\Lambda}r|w'(r)|^{q-1}\le\frac{1}{2}\left(1+\frac{\lambda(n-1)}{\Lambda}
\right)~~\text{a.e}~~r>\tilde{R}_{1}.\]
Using the fact that $z$ is nonnegative. So by using the above estimate in \eqref{eq:mprime_estimate}, we find \eqref{step2}.\\
For the next claim we use $z(r)\leq c r^{-b}$ for some $b$ satisfying $bp_{1}>b.$
 \item[\textbf{Claim 3:}] We claim that there exist an $R_{1}$ such that for $r\geq R_{1}$ we have $w^{\prime\prime}\geq0$ almost everywhere for $r>R_{1}$\\
 By claim 2, we find that $l$ is strictly increasing in $(\tilde{R}_{1},\infty)$ and $l>0$ so $l(r)\geq l(\tilde{R}_{1})$ for $r\geq\tilde{R}_{1}.$ Therefore we have:
\begin{equation}
\begin{cases}
(i)~-w^{\prime}=\frac{l(r)}{r}\geq \frac{l_{0}}{r}~~\text{for}~~r\geq R_{1}.\\
(ii)~|w^{\prime}(r)|^{q}=o(r^{-2})\\
(iii)~~z^{p_{1}}(r)=o(r^{-2}),
\end{cases}
\end{equation}
where in the second step we have used the fact that $q\beta>2$ which is a consequence of  $1<q<2$ and $\beta>\frac{1}{q-1}$ and $bp_{1}>2$ by choice of $b.$ Now putting all the term together we find that for sufficiently large $r$ we have 
\[\left(-\lambda(n-1)\frac{w'(r)}{r}+|w'(r)|^{q}-\lambda_{1}\varepsilon_{1}z^{p_{1}}(r)\right)\geq0~~\text{for~a.e}~~r\geq R_{1}.\]
So in view of \eqref{wfi121} we find the eventual convexity of $w.$ 
\end{itemize}
\end{proof}
\begin{remark}\label{recon}
  Note that if $w$ and $z$ satisfies $-\theta\big(z''(r)\big)z''(r)-\lambda(n-1)\dfrac{z'(r)}{r}+|z'(r)|^{q}
=Cw^{p_{2}}(r)$ for some constant and $-z^{\prime}<cr^{-\gamma}$ and $w\leq cr^{b}$ for some $b$ satisfying $bp_{2}>2.$ Then $z$ is eventually convex.
\end{remark}
\begin{corollary}\label{evencon1}
Let $w,z$ satisfy \eqref{fi1}, $\widetilde{n}_{+}\geq3,$ $q>q_{c}$ and $p_{1}\geq p_{2}>0.$ Additionally assume that 
\begin{itemize}
\item[(a)] $p_1p_2>1$, $\bar{\alpha}_{1}\geq\widetilde{n}_{+}-2$ and $\bar{\alpha}_{2}\geq\beta,$
\item[(b)] $p_1p_2>1,$ $\bar{\alpha}_{1}\geq\widetilde{n}_{+}-2,$ $\bar{\alpha}_{2}<\beta$ and $\hat{\alpha}_{1}\geq\widetilde{n}_{+}-2.$
\end{itemize}
Then in the case $(a)$ both $w,z$ are eventually convex, while in the case $(b),$ $w$ is eventually convex. 
\end{corollary}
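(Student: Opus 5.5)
The plan is to verify the hypotheses of Lemma~\ref{evencon} (for $w$) and of Remark~\ref{recon} (for $z$), taking the decay rates from Lemma~\ref{lem9} and Lemma~\ref{lem10} together with the gradient bounds recorded in Remark~\ref{rem:gradient_decay}. The radial pair $(w,z)$ from \eqref{fi1} is positive, bounded, tends to zero at infinity, and has $w',z'<0$ eventually. We also assume, as in the reduction after Lemma~\ref{lem3}, that $1<q<2$, which is harmless because $w',z'\to0$.

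\textbf{Case (a).} Lemma~\ref{lem9} applies and gives $w\le Cr^{-\bar\alpha_1}$, $z\le Cr^{-\bar\alpha_2}$. Remark~\ref{rem:gradient_decay} gives $-w'\le Cr^{-(\bar\alpha_1+1)}$ and $-z'\le Cr^{-(\bar\alpha_2+1)}$.

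For $w$:
\begin{itemize}
\item The gradient exponent is $\gamma=\bar\alpha_1+1$. By \eqref{gradient_3_used}, $\bar\alpha_1\ge\widetilde n_+-2$ together with $q>q_c$ gives $\gamma>\frac1{q-1}$.
\item The decay exponent of $z$ is $a=\bar\alpha_2$. Then $ap_1=p_1\bar\alpha_2=\bar\alpha_1+2>2$, since $\bar\alpha_1>0$.
\end{itemize}
So Lemma~\ref{evencon} yields eventual convexity of $w$.

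For $z$, in Remark~\ref{recon}:
\begin{itemize}
\item The gradient exponent is $\gamma=\bar\alpha_2+1$. The hypothesis $\bar\alpha_2\ge\beta=\frac{2-q}{q-1}$ gives $\bar\alpha_2+1\ge\frac1{q-1}$. This is only non-strict, so the equality case must be handled separately (see below).
\item The decay exponent of $w$ is $b=\bar\alpha_1$, and $bp_2=\bar\alpha_2+2>2$.
\end{itemize}

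\textbf{Case (b).} Lemma~\ref{lem10} is used. Its hypotheses $\hat\alpha_2<\beta<\hat\alpha_1$ have to be derived from $\bar\alpha_2<\beta$ and $\hat\alpha_1\ge\widetilde n_+-2$. This follows from the relations $\hat\alpha_1+2=p_1\hat\alpha_2$ and $q(\hat\alpha_2+1)=p_2\hat\alpha_1$, which interpolate between $\bar\alpha_i$ and $\beta$. Specifically, $\bar\alpha_2<\beta$ is equivalent to $\hat\alpha_2<\beta$ and also to $\hat\alpha_1>\beta$, by a direct algebraic check.

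Lemma~\ref{lem10} then gives $w\le Cr^{-\hat\alpha_1}$, $z\le Cr^{-\hat\alpha_2}$, and Remark~\ref{rem:gradient_decay} gives $-w'\le Cr^{-(\hat\alpha_1+1)}$. For Lemma~\ref{evencon}:
\begin{itemize}
\item $\gamma=\hat\alpha_1+1>\frac1{q-1}$ by \eqref{gradient_3_used}.
\item $a=\hat\alpha_2$ gives $ap_1=\hat\alpha_1+2>2$.
\end{itemize}
Hence $w$ is eventually convex. Nothing is claimed for $z$ in this case.

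\textbf{Main obstacle.} The difficulty is the borderline $\bar\alpha_2=\beta$ in case (a), where the gradient exponent for $z$ equals exactly $\frac1{q-1}$. In the proof of Lemma~\ref{evencon} (Claim~2), strictness is used only to make $\frac1\Lambda r|z'|^{q-1}$ smaller than half of $1+\frac{\lambda(n-1)}{\Lambda}$.

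First attempt: at equality, use the sharper fact $r^{\bar\alpha_2+1}|z'(r)|\to0$. This should follow by rerunning the doubling argument of Lemma~\ref{lem9} with the domain $B(x,|x|/2)$ shrunk to $B(x,\varepsilon|x|)$, which gives a constant arbitrarily small relative to the structural one.

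Fallback: since $w$ is already convex and $w\ge Cr^{2-\widetilde n_+}$ by \eqref{fi2}, the forcing term $Cw^{p_2}$ satisfies $r^2w^{p_2}\to0$. One can then redo Claim~3 for $z$ directly. The quantity $l=r(-z')$ satisfies $l'\ge\bigl(1+\frac{\lambda(n-1)}\Lambda-\frac{c_1^{q-1}}\Lambda\bigr)(-z')$. This is positive provided $c_1^{q-1}<\Lambda+\lambda(n-1)$, which can be arranged by the first attempt. From there the positivity of the argument of $M$ in \eqref{wfi121} follows as before.
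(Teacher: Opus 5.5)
You follow the route the paper intends: feed the decay rates of Lemmas~\ref{lem9}, \ref{lem10} and Remark~\ref{rem:gradient_decay} into Lemma~\ref{evencon} and Remark~\ref{recon}. You are right that at $\bar\alpha_2=\beta$ the exponent for $z$ is only $\frac1{q-1}$, a point Remark~\ref{rem:gradient_decay} glosses over. But verifying those hypotheses does not prove the corollary, because the proof of Lemma~\ref{evencon} is unsound. In Claim~1, Lemma~\ref{inversion lemma} gives $w''\le\frac1\Lambda\bigl(\lambda(n-1)\frac{-w'}{r}+|w'|^q-cz^{p_1}\bigr)$, hence
\[
l'=-w'-rw''\ge\Bigl(1-\frac{\lambda(n-1)}{\Lambda}\Bigr)(-w')-\frac{r}{\Lambda}|w'|^q+\frac{c}{\Lambda}rz^{p_1}=(2-\widetilde n_+)(-w')-\frac{r}{\Lambda}|w'|^q+\frac{c}{\Lambda}rz^{p_1},
\]
not $(1+\frac{\lambda(n-1)}{\Lambda})(-w')$, and $2-\widetilde n_+\le-1$. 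More decisively, Claims~2--3 assert that $l=r(-w')$ is eventually positive and nondecreasing, hence $-w'\ge l_0/r$. That is impossible for positive $w$, since integrating gives $w(r)\le w(R)-l_0\log(r/R)\to-\infty$. It also contradicts the bounds you feed in: $-w'\le Cr^{-(\bar\alpha_1+1)}$ gives $l\le Cr^{-\bar\alpha_1}\to0$, and in case (b) $l\le Cr^{-\hat\alpha_1}\to0$.

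The defect is one of direction. Since $\theta(w'')w''=g:=\lambda(n-1)\frac{-w'}{r}+|w'|^q-cz^{p_1}$, convexity means $g\ge0$, which needs a \emph{lower} bound on $-w'$ against $rz^{p_1}$. Upper bounds on $-w'$ only control the term $|w'|^q\ge0$, which is harmless. For the same reason, lowering $q$ below $2$ is not harmless here: it turns \eqref{fi1} into a supersolution inequality, which only bounds $w''$ from above.

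What does work, partially, is this. The derivation of \eqref{fi2} in the proof of Theorem~\ref{thm3} needs only Lemma~\ref{inversion lemma}, not convexity, and gives $-w'\ge c_0r^{1-\widetilde n_+}$. Together with $z^{p_1}\le Cr^{-(\bar\alpha_1+2)}$ (resp.\ $Cr^{-(\hat\alpha_1+2)}$), this gives $g>0$ eventually when $\bar\alpha_1>\widetilde n_+-2$ (resp.\ $\hat\alpha_1>\widetilde n_+-2$). The equality cases need constant-level information that neither your argument nor Lemma~\ref{evencon} provides. So does $z$ in case (a), where $w^{p_2}\le Cr^{-(\bar\alpha_2+2)}$ but $\bar\alpha_2$ can lie below $\widetilde n_+-2$.

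Your treatment of the borderline $\bar\alpha_2=\beta$ also fails on its own terms.
\begin{itemize}
\item Shrinking the doubling ball to $B(x,\varepsilon|x|)$ makes $d=\varepsilon|x|$ at the center. The bound $N\,d\le C$ then yields $|z'(r)|\le(C/\varepsilon)^{\bar\alpha_2+1}r^{-(\bar\alpha_2+1)}$, so the constant gets worse as $\varepsilon\to0$. A doubling argument only produces $O(\cdot)$ bounds, never $o(\cdot)$.
\item The fallback reuses the wrong-signed inequality for $l'$. Moreover, $\bar\alpha_2=\beta>0$ forces $q<2$, so $-z'\le c_1r^{-\frac1{q-1}}$ gives $l=r(-z')\le c_1r^{1-\frac1{q-1}}\to0$. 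Hence $l$ cannot be increasing, whatever $c_1$ is.
\item The limit $r^2w^{p_2}\to0$ comes from the upper bound $w\le Cr^{-\bar\alpha_1}$ together with $p_2\bar\alpha_1=\bar\alpha_2+2$, not from the lower bound \eqref{fi2}.
\end{itemize}
Finally, in case (b) the claimed equivalence of $\bar\alpha_2<\beta$ with $\hat\alpha_1>\beta$ is false. For $q=\frac32$ and $p_1=p_2=4$ one gets $\bar\alpha_2=\frac23<1=\beta$ but $\hat\alpha_1=\frac{18}{29}<\beta$. The hypothesis $\beta<\hat\alpha_1$ of Lemma~\ref{lem10} follows instead from $\hat\alpha_1\ge\widetilde n_+-2>\beta$, which holds because $q>q_c$.
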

\begin{remark}
There is one important point that should be checked carefully against the notation used throughout the manuscript. In the scaling argument we repeatedly used the identities
\begin{equation}\label{eq:scaling-identities}
\alpha_2 p_1-\alpha_1-2=0,\qquad\alpha_1 p_2-\alpha_2-2=0.
\end{equation}
These identities hold if and only if $\alpha_1$ and $\alpha_2$ denote the \emph{blow-up scaling exponents}, namely
\begin{equation}\label{eq:blowup-exponents}
\alpha_1=\frac{2(p_1+1)}{p_1p_2-1},\qquad\alpha_2=\frac{2(p_2+1)}{p_1p_2-1},
\end{equation}
which are precisely the unique positive solutions of the balance relations
\[2+\alpha_1=p_1\alpha_2,\qquad2+\alpha_2=p_2\alpha_1.\]
If, however, the symbols $\alpha_1$ and $\alpha_2$ are already reserved elsewhere in the manuscript (for example, to denote the decay exponents $\bar{\alpha}_i$ or the critical exponents $\hat{\alpha}_i$), then the identities in \eqref{eq:scaling-identities} are generally no longer valid. In that situation, the blow-up scaling exponents should be introduced using different notation, for example
\[\sigma_1=\frac{2(p_1+1)}{p_1p_2-1}~~\text{and}~~\sigma_2=\frac{2(p_2+1)}{p_1p_2-1},\]
so that
\[2+\sigma_1=p_1\sigma_2~~\text{and}~~2+\sigma_2=p_2\sigma_1,\]
and consequently
\[\sigma_2p_1-\sigma_1-2=0~~~\text{and}~~\sigma_1p_2-\sigma_2-2=0.\]
Using distinct symbols for the blow-up scaling exponents avoids any notational ambiguity and makes the scaling argument completely transparent. This is also the cleaner choice for the final version of the manuscript.
\end{remark}
\subsection{Sharpness of The main Results:} This section deals with the sharpness of our main Theorems \ref{thm1}, \ref{thm2}, and \ref{thm3}. As usual we produce counter example in case of violation of statement. These examples are inspired by \cite{burgos2018lioville}.
Theorems~\eqref{thm1} \eqref{thm3} provide nonexistence results for positive supersolutions. We now show that these results are optimal in the class of nonlinearities having power-type behavior near the origin.
\begin{remark}
We start by assuming $\nu_{1}:=\limsup_{t\to0}\frac{f_1(t)}{t^{p_1}}<\infty$ and $\nu_{2}:=\limsup_{t\to0}\frac{f_2(t)}{t^{p_2}}<\infty.$ Then, for every $\varepsilon>0$, there exists $\delta>0$ such that
\[f_1(t)\le(\nu_{1}+\varepsilon)t^{p_1},~~f_2(t)\le(\nu_{2}+\varepsilon)t^{p_2},\]
for all $0<t<\delta.$ Since every positive supersolution considered in Theorems~\eqref{thm1} \eqref{thm3} satisfies
$u(x),v(x)\to0$ as $|x|\to\infty$, it follows that, outside a sufficiently large ball,
\[f_1(v(x))\leq(\nu_{1}+\varepsilon)v(x)^{p_1},~~f_2(u(x))\leq(\nu_{2}+\varepsilon)u(x)^{p_2}.\]
Consequently, the original system can be compared with the pure-power system
\[\begin{cases}
-\mathcal M^+(D^2u)+|\nabla u|^q\geq Cv^{p_1},\\
-\mathcal M^+(D^2v)+|\nabla v|^q\geq Du^{p_2},
\end{cases}\]
where $C=\lambda_1(\nu_{1}+\varepsilon)$ and $D=\lambda_2(\nu_{2}+\varepsilon).$ Therefore, the sharpness of Theorems~\eqref{thm1}-
\eqref{thm3} reduces to constructing explicit positive supersolutions of the above power-type system whenever the
assumptions of the corresponding theorem fail.
\end{remark}
\begin{remark}\label{rem:sharpness-principle}(Sharpness Constructions).
To establish the sharpness of Theorems~\eqref{thm1}--\eqref{thm3}, it is sufficient to show that whenever the assumptions of the corresponding nonexistence theorem fail, one can explicitly construct a positive supersolution of \eqref{main}. Assume that
\begin{equation}\label{6.12lb}
\begin{cases}
\nu_{1}:=\limsup_{t\to0}\frac{f_1(t)}{t^{p_1}}<\infty,\\
\nu_{2}:=\limsup_{t\to0}\frac{f_2(t)}{t^{p_2}}<\infty.
\end{cases}
\end{equation}
Since all supersolutions considered in Theorems~\eqref{thm1}--\eqref{thm3} satisfy $u(x)\to0$ and $v(x)\to0$ as $|x|\to\infty$ therefore, for every $\varepsilon>0$ there exists $R_\varepsilon>0$ such that
\begin{equation}\label{6.12l}
\begin{cases}
f_1(v(x))\le(\nu_{1}+\varepsilon)v(x)^{p_1},\\
f_2(u(x))\leq(\nu_{2}+\varepsilon)u(x)^{p_2},
\end{cases}
\end{equation}
whenever $|x|\ge R_\varepsilon$. Therefore, near infinity, the original system can be reduced to a power-type system. Inspired by the classical Lane--Emden theory, we seek radial supersolutions of the form
\begin{equation}\label{6.13l}
\begin{cases}
u(x)=A_1|x|^{-a_1},\\
v(x)=A_2|x|^{-a_2},
\end{cases}
\end{equation}
where $A_1,A_2,a_1,a_2>0$ are constants to be chosen. For the classical Laplacian system
\begin{equation}\label{eq:6.14l}
\begin{cases}
-\Delta u\ge v^{p_1},\\
-\Delta v\ge u^{p_2}
\end{cases}
\end{equation}
one get $-\Delta(|x|^{-a})=a(n-2-a)|x|^{-a-2},$ and the critical exponents arise by balancing the decay rates
\begin{equation}\label{6.15l}
a_1+2=p_1a_2,~~a_2+2=p_2a_1.
\end{equation}
For the Pucci operator the computation is different. Indeed, if $u(x)=A_1|x|^{-a_1},~v(x)=A_2|x|^{-a_2}$ then $u^{\prime\prime},v^{\prime\prime}>0$ and $u^{\prime},v^{\prime}<0$ consequently, we have $\mathcal M^+_{\lambda,\Lambda}(D^2u)=\Lambda u''(|x|)+\lambda(n-1)\frac{u'(|x|)}{|x|}.$ Thus,
\[
\begin{aligned}
\begin{cases}
-\mathcal M^+_{\lambda,\Lambda}(D^2u)=\Lambda a_1\Bigl(\widetilde{n}_{+}-2-a_1\Bigr)A_1|x|^{-a_1-2},\\
-\mathcal M^+_{\lambda,\Lambda}(D^2v)=\Lambda a_2\Bigl(\widetilde{n}_{+}-2-a_2\Bigr)A_2|x|^{-a_2-2},
\end{cases}
\end{aligned}
\]
where $\widetilde{n}_{+}=\frac{\lambda}{\Lambda}(n-1)+1.$ Furthermore,
\[
\begin{aligned}
\begin{cases}
|\nabla u|^q=a_1^qA_1^q |x|^{-q(a_1+1)},\\
|\nabla v|^q=a_2^qA_2^q |x|^{-q(a_2+1)}.
\end{cases}
\end{aligned}
\]
Hence the supersolution inequalities reduce to
\begin{equation}\label{6.14l}
\begin{cases}
\Lambda a_1(\widetilde{n}_{+}-2-a_1)A_1r^{-a_1-2}+a_1^qA_1^q r^{-q(a_1+1)}\geq CA_2^{p_1}r^{-p_1a_2},\\
\Lambda a_2(\widetilde{n}_{+}-2-a_2)A_2r^{-a_2-2}+a_2^qA_2^q r^{-q(a_2+1)}\geq DA_1^{p_2}r^{-p_2a_1},
\end{cases}
\end{equation}
for sufficiently large $r$ and $C=\lambda_1(\nu_{1}+\varepsilon)$, $D=\lambda_2(\nu_{2}+\varepsilon).$ Consequently, the proof of sharpness is reduced to finding exponents $a_1,a_2$ and amplitudes $A_1,A_2$ for which \eqref{6.14l} holds. The various critical exponents appearing in
Theorems~\eqref{thm1}, \eqref{thm2}, and \eqref{thm3} arise precisely from balancing the decay rates
\[a_i+2,\quad q(a_i+1),\quad p_1a_2,\quad p_2a_1,\]
that is, from determining whether the dominant contribution comes from the second-order Pucci term or from the gradient term. This is exactly the fully nonlinear analogue of the classical Lane-Emden sharpness construction for the Laplacian.
\end{remark}
\textbf{Sharpness of Theorem~\eqref{thm1}}: Assume $1<q<q_c$ and suppose that the assumptions of Theorem~\eqref{thm1}
are not satisfied. By Remark~\ref{rem:sharpness-principle}, it is sufficient to construct a positive supersolution of the reduced power system
\begin{equation}\label{sharp-system1}
\begin{cases}
-\mathcal M^+_{\lambda,\Lambda}(D^2u)+|\nabla u|^q\geq Cv^{p_1},\\
-\mathcal M^+_{\lambda,\Lambda}(D^2v)+|\nabla v|^q\geq Du^{p_2},
\end{cases}
\end{equation}
where $C=\lambda_1(\nu_{1}+\varepsilon),$ $D=\lambda_2(\nu_{2}+\varepsilon).$ We seek radial supersolutions of the form $u(x)=A_1|x|^{-a_1},$ $v(x)=A_2|x|^{-a_2},$ with positive constants $A_1,A_2,a_1,a_2.$ According to Remark~\ref{rem:sharpness-principle},
the supersolution inequalities reduce to
\begin{equation}\label{sharp-red1}
\begin{cases}
\Lambda a_1(\widetilde{n}_{+}-2-a_1)A_1r^{-a_1-2}+a_1^qA_1^q r^{-q(a_1+1)}\geq CA_2^{p_1}r^{-p_1a_2},\\
\Lambda a_2(\widetilde{n}_{+}-2-a_2)A_2r^{-a_2-2}+a_2^qA_2^q r^{-q(a_2+1)}\geq DA_1^{p_2}r^{-p_2a_1},
\end{cases}
\end{equation}
for all sufficiently large $r.$ We distinguish the three possible ways in which the hypotheses of Theorem~\eqref{thm1} may fail.
\begin{itemize}
\item[\textbf{Case 1:}] Assume $p_1p_2>q^2$ and $\alpha_{2}\le \alpha_{1}<\beta,$ where $\alpha_{1}=\frac{q(p_1+q)}{p_1p_2-q^2},~~\alpha_{2}=\frac{q(p_2+q)}{p_1p_2-q^2}.$
These exponents satisfy $q(\alpha_{1}+1)=p_1\alpha_{2}$ and $q(\alpha_{2}+1)=p_2\alpha_{1}.$ Set $a_1=\alpha_{1}$ and $a_2=\alpha_{2}.$ As we have $\alpha_i<\beta=\frac{2-q}{q-1},$ so we get $q(\alpha_i+1)<\alpha_i+2.$ Thus 
\[r^{-q(\alpha_i+1)}\gg r^{-(\alpha_i+2)}~~\text{as}~~r\to\infty,\]
consequently, we have 
\[\frac{r^{-(\alpha_i+2)}}{r^{-q(\alpha_i+1)}}=r^{-(\alpha_i+2-q(\alpha_i+1))}\to0.\]
Thus the gradient terms dominate the Pucci terms for large $r.$ Using
\[
q(\alpha_{1}+1)=p_1\alpha_{2},~~q(\alpha_{2}+1)=p_2\alpha_{1},
\]
system \eqref{sharp-red1} is satisfied provided 
\begin{equation}\label{alg1}
\alpha_{1}^qA_1^q>CA_2^{p_1},\quad\alpha_{2}^qA_2^q>DA_1^{p_2}.
\end{equation}
It remains to show that \eqref{alg1} admits positive solutions. The inequalities are equivalent to
\[
\begin{cases}
A_1>\Bigl(\frac{C}{\alpha_{1}^q}\Bigr)^{1/q}A_2^{p_1/q}\\
\text{and}\\
A_2>\Bigl(\frac{D}{\alpha_{2}^q}\Bigr)^{1/q}A_1^{p_2/q}.
\end{cases}
\]
Substituting the first inequality into the second yields $A_2>KA_2^{\frac{p_1p_2}{q^2}},$ where $K=\Bigl(\frac{D}{\alpha_{2}^q}\Bigr)^{1/q}\Bigl(\frac{C}{\alpha_{1}^q}\Bigr)^{p_2/q^2}.$ Since $\frac{p_1p_2}{q^2}>1,$ the right-hand side grows faster than linearly. Consequently, choosing $A_2>0$ sufficiently small gives
$A_2>K A_2^{p_1p_2/q^2}.$ Once $A_2$ is fixed, one may choose $A_1$ satisfying
\[\Bigl(\frac{C}{\alpha_{1}^q}\Bigr)^{1/q}A_2^{p_1/q}<A_1<\Bigl(\frac{\alpha_{2}^q}{D}\Bigr)^{1/p_2}A_2^{q/p_2},\]
which is possible precisely because $p_1p_2>q^2.$ Hence \eqref{alg1} possesses positive solutions, and therefore
\eqref{sharp-system1} admits a positive supersolution.
\item[\textbf{Case 2:}] Assume $\alpha_{1}=\alpha_{2}=\beta.$ Since $\beta+2=q(\beta+1),$ the Pucci terms and the gradient terms have exactly the same decay rate. Setting $a_1=a_2=\beta,$ system \eqref{sharp-red1} becomes
\[
\begin{cases}
\Lambda\beta(\widetilde{n}_{+}-2-\beta)A_1+\beta^qA_1^q\geq CA_2^{p_1},\\
\Lambda\beta(\widetilde{n}_{+}-2-\beta)A_2+\beta^qA_2^q\geq DA_1^{p_2}.
\end{cases}
\]
The negation of condition \eqref{1.5} asserts precisely that there exists a pair$ t_1,t_2\in(0,\infty)$ satisfying
\[
\begin{cases}
\Lambda\beta(\widetilde{n}_{+}-2-\beta)t_1+\beta^qt_1^q\geq\lambda_1\mu_{1} t_2^{p_1},\\
\text{and}\\
\Lambda\beta(\widetilde{n}_{+}-2-\beta)t_2+\beta^qt_2^q\geq\lambda_2\mu_{2} t_1^{p_2}.
\end{cases}
\]
Choosing $A_1=t_1,~A_2=t_2,$ we obtain a positive supersolution.
\item[\textbf{Case 3:}]  Assume $\alpha_{2}<\alpha_{1}=\beta.$ Set $a_1=\beta,~
a_2=\alpha_{2}.$
The first equation is balanced by both the Pucci term and the gradient term, whereas in the second equation the gradient term dominates because $\alpha_{2}<\beta.$ Hence \eqref{sharp-red1} is implied by
\[
\begin{cases}
\Lambda\beta(\widetilde{n}_{+}-2-\beta)A_1+\beta^qA_1^q\geq CA_2^{p_1},\\
\alpha_{2}^qA_2^q\geq DA_1^{p_2}.
\end{cases}
\]
\end{itemize}
The negation of condition \eqref{1.6} guarantees the existence of a pair $(t_1,t_2)\in(0,\infty)^2$ satisfying these inequalities.
Taking $A_1=t_1,$ $A_2=t_2$ yields a positive supersolution. In all possible situations in which the assumptions of Theorem~\eqref{thm1} fail, one can construct an explicit positive supersolution of power type. Therefore Theorem~\eqref{thm1} is sharp.\\
\textbf{Sharpness of Theorem~\eqref{thm2}:} Assume and suppose that the assumptions of Theorem~\eqref{thm2} fail. Since part $a$ states nonexistence when $p_1p_2\le q_c^2,$ the complementary region is $p_1p_2>q_c^2.$ Moreover, part $b$ excludes positive supersolutions when $\alpha_{1}>\widetilde{n}_{+}-2.$ Hence sharpness must be established in the range $p_1p_2>q_c^2,$ and $\alpha_{1}\leq\widetilde{n}_{+}-2.$
We seek radial supersolutions of the form$ u(x)=A_1|x|^{-\alpha_{1}},$ $v(x)=A_2|x|^{-\alpha_{2}}$ where
\[
\alpha_{1}=\frac{q_c(p_1+q_c)}{p_1p_2-q_c^2},~~
\alpha_{2}=\frac{q_c(p_2+q_c)}{p_1p_2-q_c^2}.
\]
Since $q=q_c,$ we have $\beta=\frac{2-q_c}{q_c-1}=\widetilde{n}_{+}-2.$ Furthermore, $\alpha_{1}\leq \beta,$ and therefore
\[q_c(\alpha_{1}+1)=p_1\alpha_{2}\leq\alpha_{1}+2.\]
Similarly,
\[q_c(\alpha_{2}+1)=p_2\alpha_{1}\leq\alpha_{2}+2.\]
Consequently, the gradient terms dominate or balance the Pucci terms. Using Remark~\ref{rem:sharpness-principle}, the supersolution inequalities reduce to
\[
\begin{cases}
\alpha_{1}^{q_c}A_1^{q_c}\geq C A_2^{p_1},\\
\alpha_{2}^{q_c}A_2^{q_c}\geq D A_1^{p_2}.
\end{cases}
\]
Since $p_1p_2>q_c^2,$ the algebraic system admits positive solutions. Hence there exist positive constants $A_1,A_2>0$ such that
$u(x)=A_1|x|^{-\alpha_{1}}$ and $v(x)=A_2|x|^{-\alpha_{2}}$ form a positive supersolution of \eqref{main} outside a sufficiently large ball. Therefore Theorem~\eqref{thm2} is sharp.\\
\textbf{Sharpness of Theorem~\eqref{thm3}:} Assume $q>q_c,$ $p_1\ge p_2$ and  $p_1p_2>1$ and suppose that the assumptions of Theorem~\eqref{thm3} are violated. We distinguish the two possible complementary regimes.
\begin{itemize}
\item[\textbf{Case~1:}] Assume $\bar{\alpha}_{1}<\widetilde{n}_{+}-2.$ Choose $a_1=\bar{\alpha}_{1},$ and $a_2=\bar{\alpha}_{2}.$
By definition, $\bar{\alpha}_{1}+2=p_1\bar{\alpha}_{2},$ $\bar{\alpha}_{2}+2=p_2\bar{\alpha}_{1}.$ Hence the Pucci terms have exactly the same decay as the right-hand sides. Substituting into the reduced inequalities gives
\[
\begin{cases}
\Lambda\bar{\alpha}_{1}(\widetilde{n}_{+}-2-\bar{\alpha}_{1})A_1\geq CA_2^{p_1},\\
\Lambda\bar{\alpha}_{2}(\widetilde{n}_{+}-2-\bar{\alpha}_{2})A_2\geq DA_1^{p_2}.
\end{cases}
\]
Since $\bar{\alpha}_{1}<\widetilde{n}_{+}-2,$ both coefficients on the left-hand side are strictly positive. Because $p_1p_2>1,$ the above algebraic system possesses positive solutions
$A_1,A_2>0.$ Thus
\[
\begin{cases}
u(x)=A_1|x|^{-\bar{\alpha}_{1}},\\
v(x)=A_2|x|^{-\bar{\alpha}_{2}},
\end{cases}
\]
is a positive supersolution.
\item[\textbf{Case~2}:]  Assume $\bar{\alpha}_{2}<\beta,$ and $\hat{\alpha}_{1}<\widetilde{n}_{+}-2.$ Choose $a_1=\hat{\alpha}_{1},$ $a_2=\hat{\alpha}_{2}.$ The exponents satisfy
\[\hat{\alpha}_{1}+2=p_1\hat{\alpha}_{2},\quad q(\hat{\alpha}_{2}+1)=p_2\hat{\alpha}_{1}.\]
Therefore the first equation is balanced by the Pucci term, whereas the second equation is balanced by the gradient term. Substituting into the reduced system yields
\[
\begin{cases}
\Lambda\hat{\alpha}_{1}(\widetilde{n}_{+}-2-\hat{\alpha}_{1})A_1\geq CA_2^{p_1},\\
\hat{\alpha}_{2}^qA_2^q\geq DA_1^{p_2}.
\end{cases}
\]
Since $\hat{\alpha}_{1}<\widetilde{n}_{+}-2,$ the first coefficient is strictly positive.
Moreover, $p_{1}p_{2}>q,$ which is equivalent to the existence of the exponents
$\hat{\alpha}_{1},\hat{\alpha}_{2}.$ Consequently, the algebraic system again admits positive solutions
$A_1,A_2>0.$ Hence
\[
\begin{cases}
u(x)=A_1|x|^{-\hat{\alpha}_{1}},\\
v(x)=A_2|x|^{-\hat{\alpha}_{2}}
\end{cases}
\]
defines a positive supersolution. Therefore Theorem~\eqref{thm3} is sharp.
\end{itemize}
\section{Supersolutions with blow-up at infinity}
In the present section, we study supersolutions that exhibit blow-up behavior at infinity. For such supersolutions, it is not necessary to reduce the problem to radial solutions. In fact, the reduction carried out in Lemma \eqref{lem3} already provides what is needed. Consequently, in the similar manner of proof of \eqref{lem8} we can we can proof Theorem \eqref{thm4}.
In order to prove Theorem \ref{thm4}, we need some elementary and intermediate results. We start with the following result which asserts the eventual monotonicity the derivative. It will help to obtain the gradient domination. 
All the above mentioned results are concerned with the following system of inequalities
\begin{equation}\label{eq:eventual-mon-system-new}
\begin{cases}
-\theta(w'')w''-\theta(w')(n-1)\dfrac{w'}r+|w'|^q\geq c_1 z^{p_1}&\text{a.e. in}~~(R_0,\infty),\\
-\theta(z'')z''-\theta(z')(n-1)\dfrac{z'}r+|z'|^q\geq c_2 w^{p_2}&\text{a.e. in}~~(R_0,\infty),
\end{cases}
\end{equation}
where $c_1,c_2>0.$
\begin{lemma}[Eventual Positivity]\label{lem:eventual-monotonicity}
Assume that $1<q\leq2$. Let $w,z\in W^{2,s}_{\rm loc}(R_0,\infty)\cap C^1(R_0,\infty),$ for some $s>1,$ be a positive radial supersolution of \eqref{eq:eventual-mon-system-new}
Assume also that $w(r),z(r)\to+\infty$ as $r\to\infty.$ Then there exists $R_1>R_0$ such that
\[w'(r)>0,~~ z'(r)>0,~~~\text{for}~~r\ge R_1.\]
\end{lemma}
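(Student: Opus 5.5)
Since $w,z\to+\infty$ while $w'$ and $z'$ are continuous, the only thing to exclude is that $w'$ keeps returning to nonpositive values on an unbounded set. The plan is to show that once $w'$ becomes positive it can never again reach zero from above. We then pick a first point where $w'>0$, which exists because $w$ is unbounded. The same argument applies verbatim to $z$, so we treat only $w$.

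\textbf{Step 1 (no interior zero of $w'$ approached from a positive region).} Suppose $w'(r_1)>0$ and let $\rho:=\inf\{r>r_1:\ w'(r)\le 0\}$, assuming $\rho<\infty$. Then $w'(\rho)=0$ and $w'>0$ on $(r_1,\rho)$, so $\theta(w')=\Lambda$ there. We cannot evaluate $w''(\rho)$, so we work with the absolutely continuous function $\xi:=w'\in W^{1,s}_{\rm loc}$. On $(r_1,\rho)$ the first inequality of \eqref{eq:eventual-mon-system-new} gives
\[
\theta(\xi')\xi'\le -\Lambda(n-1)\frac{\xi}{r}+\xi^q-c_1z^{p_1}\quad\text{a.e.}
\]
By the Structural Inversion Lemma~\ref{inversion lemma} we get $\xi'\le \frac{1}{\Lambda}\bigl(\xi^q-c_1z^{p_1}\bigr)$ a.e.; we may also discard the negative term $-\Lambda(n-1)\xi/r$. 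Since $z\to\infty$, we have $c_1z^{p_1}\ge c_1m>0$ on the relevant range once $r_1$ is large. Moreover $\xi(r)\to0$ as $r\uparrow\rho$, so on some interval $(\rho-\eta,\rho)$ we have $\xi^q<\tfrac12 c_1 m$, and hence $\xi'\le -\tfrac{c_1m}{2\Lambda}<0$ a.e. there. Integrating the absolutely continuous function $\xi$ gives $\xi(\rho)<\xi(\rho-\eta/2)$. Since $\xi$ is decreasing on that interval, this is consistent with $\xi(\rho)=0<\xi(\rho-\eta/2)$, so as it stands there is no contradiction. The argument must therefore be reversed: the useful direction is to show that $w'$ cannot \emph{cross from negative to positive} infinitely often, or that a zero of $w'$ forces $w'<0$ immediately afterwards.

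\textbf{Step 1$'$ (corrected: zeros of $w'$ are transversal downward).} Let $w'(\rho)=0$. Near $\rho$ we have $|\xi|$ small and $z^{p_1}\ge m$. The inequality $\theta(\xi')\xi'\le (n-1)\theta(\xi)|\xi|/r+|\xi|^q-c_1z^{p_1}$, together with Lemma~\ref{inversion lemma}, yields $\xi'\le -\tfrac{c_1 m}{2\Lambda}$ a.e. on a neighbourhood $(\rho-\eta,\rho+\eta)$. Thus $\xi$ is strictly decreasing through each of its zeros, so $\xi$ has at most one zero for $r\ge R_*$, where $R_*$ is chosen so that $z^{p_1}\ge m$ on $[R_*,\infty)$. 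Indeed, between two consecutive downward crossings $\xi$ would have to cross upward, which is impossible. Consequently $w'$ has eventually constant sign.

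\textbf{Step 2 (sign is positive).} If $w'\le0$ eventually, then $w$ is eventually nonincreasing, which contradicts $w\to+\infty$. Hence $w'>0$ on $[R_1,\infty)$ for some $R_1>R_*$; strict positivity follows because any zero would be a downward crossing by Step 1$'$.

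\textbf{Main obstacle.} The main difficulty is the a.e.\ bookkeeping in Step 1$'$. We must show that the bound $\xi'\le -\delta$ holds uniformly near a zero even though $\theta(\xi)$ changes across it. The expected fix is the bound $|(n-1)\theta(\xi)\xi/r|\le \Lambda(n-1)|\xi|/r$, combined with continuity of $\xi$. Once this is in place, the level-crossing argument (an infimum of the set where the sign is violated, followed by integrating $\xi'$) uses only absolute continuity, so the $W^{2,s}$ regularity suffices. Finally, the hypothesis $q\le2$ is not needed here, beyond being a standing assumption.
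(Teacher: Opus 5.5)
Once you delete the false start, your argument is correct. The opening plan (that $w'$ can never again reach zero from above) is backwards, and Step 1 yields no contradiction. The proof is Step 1$'$ together with Step 2. It follows a genuinely different and simpler route than the paper.

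\textbf{The paper's route.} It first uses $q\le 2$ to bound $|w'|^q\le (q-1)|w'|^2+(2-q)|w'|$. It then applies the exponential change of variable $u=\frac{\lambda}{q-1}\bigl(1-e^{-\frac{q-1}{\lambda}w}\bigr)$ to absorb the quadratic term. This gives, on $\{w'<0\}$, an inequality linear in $u'$ whose right-hand side $c_1z^{p_1}e^{-\frac{q-1}{\lambda}w}$ is strictly positive. Integrating $-(\xi u')'>0$ with an integrating factor $\xi$ over a bounded component $(a,b)$ of $\{w'<0\}$, where $u'(a)=u'(b)=0$, rules such components out. Only afterwards does it show, by an approximate second-order minimum argument at a zero of $u'$, that $w'$ cannot vanish.

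\textbf{Your route.} Step 1$'$ is essentially that last step done directly on $w$. Near a zero of $w'$, the first-order term and the gradient term are small while $c_1z^{p_1}$ is not, so the inversion lemma gives $w''\le -c$ a.e.\ there. You then notice this already does everything. Every zero of $w'$ is a strict downward crossing, so $w'$ has at most one zero beyond $R_*$. In particular, the right endpoint of a bounded component of $\{w'<0\}$ cannot exist, so the transformation and the integrating factor are unnecessary.

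\textbf{Comparison.} Your argument is shorter and uses only absolute continuity of $w'$ and positivity of $z$ near each zero of $w'$; the limit $z\to\infty$ is not really needed. As you say, it also drops the restriction $q\le 2$, which in the paper serves only the transformation step. The paper's integrating-factor step would tolerate a right-hand side that is merely nonnegative and nontrivial on each component. However, its final step needs strict positivity at the zeros of $w'$ anyway, so nothing is gained for this lemma.
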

\begin{proof}
We prove the assertion for $w,$ the proof for the case $z$ is similar. Our aim is to show that $w^{\prime}$ is eventually positive. We start by observing that$(q-1)|t|^2+(2-q)|t|\geq|t|^q$ holds for every $t\in\mathbb{R}.$ Consequently, the first inequality in \eqref{eq:eventual-mon-system-new} implies
\begin{equation}\label{eq:w-ineq-1}
-\theta(w'')w''-\theta(w')(n-1)\frac{w'}r+(q-1)|w'|^2+(2-q)|w'|\geq c_1 z^{p_1}~~\text{a.e. in}~(R_0,\infty).
\end{equation}
Define $u(r):=\frac{\lambda}{q-1}\Bigl(1-e^{-\frac{q-1}{\lambda}w(r)}\Bigr).$
Then $1-\frac{q-1}{\lambda}u(r)=e^{-\frac{q-1}{\lambda}w(r)}> 0,$ and $u'=e^{-\frac{q-1}{\lambda}w}w'.$
Hence,
\[
\begin{cases}
&u'(r)=0~~\Longleftrightarrow~~w'(r)=0,\\
&\text{and}\\
&u'(r)<0~~\Longleftrightarrow~~w'(r)<0.\\
\end{cases}
\]
Differentiating once more gives
\[
\left\{
\begin{aligned}
&u''=e^{-\frac{q-1}{\lambda}w}\left(w''-\frac{q-1}{\lambda}(w')^2\right).\\
&\text{Equivalently}\\
&w''=\frac{u''}{1-\frac{q-1}{\lambda}u}+\frac{q-1}{\lambda}\frac{|u'|^2}{\Bigl(1-\frac{q-1}{\lambda}u\Bigr)^2}\\
&:=U(r).
\end{aligned}
\right.
\]
Substituting the previous identities into \eqref{eq:w-ineq-1} yields
\begin{equation}\label{eq:u-ineq}
-\theta(U)u''-\left(\frac{(n-1)\theta(u')}{r}+(2-q)\right)u'\ge c_1 z^{p_1}\left(1-\frac{q-1}{\lambda}u\right)~~\text{a.e. in}~~(R_0,\infty).
\end{equation}
We will achieve our aim in two steps. First we show that eventually  $w^{\prime}\geq 0.$ Assume by contradiction that $w'$ is not eventually nonnegative. So there exists a bounded connected component $I=(a,b)$ of the set $\{r>R_0:\ w'(r)<0\}$ with arbitrarily large $a.$ Notice that
\[u'<0\iff w'<0,\]
so by continuity we have
\[u'(a)=u'(b)=0,\quad~~\text{and}~~~u'(r)<0\quad~\text{for}~~a<r<b.\]
Define
\[
\left\{
\begin{aligned}
&\chi(r)=\frac{1}{\theta(U(r))}\Big[\dfrac{(n-1)\theta(u'(r))}{r}+(2-q)\Big],\\
&\text{and}\\
&\xi(r)=\exp\left(\int_a^r \chi(s)ds\right).
\end{aligned}
\right.
\]
Since $\theta(U)\in[\lambda,\Lambda],$ the function $\xi$ is positive and absolutely continuous. Multiplying \eqref{eq:u-ineq} by
$\xi/\theta(U)$ we get 
\[
\left\{
\begin{aligned}
&-\xi u''-\xi\chi u'\ge\frac{c_1\xi}{\theta(U)}z^{p_1}\left(1-\frac{q-1}{\lambda}u\right).\\
&\text{Using}~\xi^{\prime}=\chi\xi~~\text{we get}\\
&-(\xi u')'\ge\frac{c_1\xi}{\theta(U)}z^{p_1}\left(1-\frac{q-1}{\lambda}u\right)~~\text{a.e.  in}~~(a,b).
\end{aligned}
\right.
\]
Integrating the above inequality in $(a,b)$ and noting that $u^{\prime}(a)=u^{\prime}(b),$ we get 
\[0\ge\int_a^b\frac{c_1\xi}{\theta(U)}z^{p_1}\left(1-\frac{q-1}{\lambda}u\right)dr,\]
 which is impossible as $1-\frac{q-1}{\lambda}u=e^{-\frac{q-1}{\lambda}w}>0$
on $(a,b).$ This contradiction proves that there exists
$R_1>R_0$ such that $w'(r)\ge0$ $\forall r\ge R_1.$\\
 Next we show that $w^{\prime}(r)\not=0$ for any $r\geq R_{1}.$ Assume by contradiction that $w'(R_0)=0$ for some $R_0\ge R_1.$ Then
$u'(R_0)=0.$ Moreover it is point of minimum of $u^{\prime}$ as $u'\geq0$ on $[R_1,\infty).$ Note that $u'\in W^{1,s}_{\text{loc}},$ therefore,
there exists a sequence of Lebesgue points $r_k\to R_0$ such that
\begin{equation}\label{secon_deri}
u''(r_k)\ge-\frac{1}{k},\quad~\text{and}\quad u'(r_k)\to0.
\end{equation}
Also $z$ is continuous and positive, there exists $\delta>0$ such that $z(r_k)^{p_1}\ge\delta$ for all sufficiently large $k.$ Evaluating \eqref{eq:u-ineq} at $r_k$ gives
\[-\theta(U(r_k))u''(r_k)-\left(\frac{(n-1)\theta(u'(r_k))}{r_k}+(2-q)\right)
u'(r_k)\geq c_1\delta\left(1-\frac{q-1}{\lambda}u(r_k)\right).\]
Using \eqref{secon_deri} and $\theta(U(r_k))\le\Lambda,$ we get
\[c_1\delta\left(1-\frac{q-1}{\lambda}u(R_0)\right)\le\frac{\Lambda}{k}+o(1).\]
Passing to the limit $k\to\infty$ yields $c_1\delta\left(1-\frac{q-1}{\lambda}u(R_0)\right)\le0.$ Which leads to contradiction in view of 
$1-\frac{q-1}{\lambda}u(R_0)=e^{-\frac{q-1}{\lambda}w(R_0)}> 0,$
Therefore, $w'(r)>0$ $\forall r\ge R_1.$ Repeating the same argument for the second equation of \eqref{eq:eventual-mon-system-new} shows that $z'(r)>0~~\forall r\ge R_1.$
\end{proof}
\begin{lemma}\label{lem:crossing}
Let $w,z\in W^{2,s}_{loc}(R_0,\infty)\cap C^1(R_0,\infty),$ for some $s>1,$ be a positive radial supersolution of \eqref{eq:eventual-mon-system-new}. Assume that $w(r)\to+\infty,$  $z(r)\to+\infty$ and that $w'(r)>0, z'(r)>0$ for all sufficiently large $r$. Define
\[H_1(r):=c_1 z(r)^{p_1}-2(w'(r))^q .\]
Then there exist constants $R>R_0,$  $\varepsilon_0>0$ and $\eta>0$ such that $H_1'(r)\geq\eta$ a.e. in $\mathcal S_{\varepsilon_0},$ where
\[\mathcal S_{\varepsilon_0}:=\{r\ge R:\ |H_1(r)|<\varepsilon_0\}.\]
Consequently $H_1$ can cross the level $0$ at most once.
\end{lemma}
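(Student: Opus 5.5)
The plan is a direct computation of $H_1'$ on the thin set $\mathcal S_{\varepsilon_0}$, combined with the structural inversion Lemma \ref{inversion lemma}. The key point is that near the level $0$ the second-order term of the equation is forced to have a sign, and that sign makes $H_1'$ large.

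\textbf{Step 1: regularity and the formula for $H_1'$.} Since $w,z\in W^{2,s}_{\rm loc}\cap C^1$ and $w',z'>0$ for $r\ge R$ (with $R$ large), the function $H_1$ lies in $W^{1,s}_{\rm loc}(R,\infty)$. In particular it is locally absolutely continuous, and for a.e. $r$
\[
H_1'(r)=c_1p_1z^{p_1-1}z'-2q(w')^{q-1}w''.
\]
The first term is nonnegative because $z'>0$. Everything therefore reduces to showing that $w''$ is very negative on $\mathcal S_{\varepsilon_0}$.

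\textbf{Step 2: sign of $w''$ on $\mathcal S_{\varepsilon_0}$.} Because $w'>0$ we have $\theta(w')=\Lambda$, so the first inequality of \eqref{eq:eventual-mon-system-new} gives, a.e.,
\[
\theta(w'')w''\le -\Lambda(n-1)\frac{w'}{r}+(w')^q-c_1z^{p_1}\le (w')^q-c_1z^{p_1}.
\]
On $\mathcal S_{\varepsilon_0}$ we have $c_1z^{p_1}>2(w')^q-\varepsilon_0$, hence $\theta(w'')w''<-(w')^q+\varepsilon_0$.

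We also need a lower bound on $w'$ there. On $\mathcal S_{\varepsilon_0}$,
\[
2(w')^q>c_1z^{p_1}-\varepsilon_0 .
\]
Since $z\to+\infty$, after enlarging $R$ we get $(w')^q\ge 2\varepsilon_0+1$ on $\mathcal S_{\varepsilon_0}$. Fixing, say, $\varepsilon_0=1$ and this $R$, the right-hand side above is $\le -\tfrac12(w')^q<0$.

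So $w''<0$ a.e. on $\mathcal S_{\varepsilon_0}$. This gives $\theta(w'')=\lambda$ and
\[
w''\le -\frac{1}{2\lambda}(w')^q .
\]
Equivalently one may simply use Case 2 of Lemma \ref{inversion lemma}.

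\textbf{Step 3: the bound $H_1'\ge\eta$.} Plugging Step 2 into the formula of Step 1 yields, a.e. on $\mathcal S_{\varepsilon_0}$,
\[
H_1'\ge \frac{q}{\lambda}(w')^{2q-1}\ge \frac{q}{\lambda}(2\varepsilon_0+1)^{(2q-1)/q}=:\eta>0 .
\]

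\textbf{Step 4: at most one crossing.} This is the delicate point, since $H_1'$ is only defined a.e. I would argue by absolute continuity, as in the proof of Lemma \ref{lem8}. Suppose $H_1(r_1)=0$ with $r_1\ge R$.

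By continuity, $H_1$ stays in $(-\varepsilon_0,\varepsilon_0)$ on a neighbourhood of $r_1$. There $H_1(t)-H_1(s)=\int_s^tH_1'\ge\eta(t-s)$, so $H_1<0$ just to the left of $r_1$ and $H_1>0$ just to the right.

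If $H_1$ returned to a value $\le 0$ after $r_1$, let $\rho=\inf\{r>r_1:H_1(r)\le 0\}$. Then $\rho>r_1$ and $H_1(\rho)=0$. The same local argument at $\rho$ shows $H_1<0$ immediately to the left of $\rho$, contradicting the choice of $\rho$.

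Hence $H_1>0$ for all $r>r_1$. Two distinct zeros are therefore impossible, since the first would force positivity at the second. So $H_1$ crosses $0$ at most once.

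The main obstacle is Step 2: one must ensure that the smallness of $|H_1|$ really forces $w''<0$ uniformly. This is exactly where the blow-up $z\to\infty$ is used, through the lower bound on $w'$ inside $\mathcal S_{\varepsilon_0}$.
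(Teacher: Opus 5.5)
Your proof is correct and follows essentially the same route as the paper. You compute $H_1'$ a.e., use the supersolution inequality with $\theta(w')=\Lambda$, the smallness of $|H_1|$ on $\mathcal S_{\varepsilon_0}$ and $z\to\infty$ to force $w''$ strongly negative, drop the nonnegative $z'$ term, and finish with an absolute-continuity transversality argument. Your bookkeeping also makes rigorous two points the paper only sketches: bounding $w''$ through $(w')^q$ (rather than $z^{p_1}$) yields the explicit bound $(w')^q\ge 2\varepsilon_0+1$ on $\mathcal S_{\varepsilon_0}$, hence a uniform $\eta$ on this unbounded set, and the choice $\rho=\inf\{r>r_1: H_1(r)\le 0\}$ cleanly rules out a second zero.
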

\begin{proof} We first establish the lower estimate on $H_{1}.$ We start by noting that $w,z\in W^{2,s}_{loc}(R_0,\infty),$ consequently $H_1\in W^{1,1}_{loc}(R_1,\infty).$ Thus we find that $H_1$ is absolutely and 
\begin{equation}\label{H1deri}
H_1'=c_1p_1 z^{p_1-1}z'-2q(w')^{q-1}w''
\end{equation}
for almost every $r$. Now fix $\varepsilon_{0}>0$ and consider $\mathcal S_{\varepsilon_{0}}=\{r\ge R:\ |H_1(r)|<\varepsilon_{0}\}.$ Now, observe that for $r\in\mathcal S_{\varepsilon_{0}},$ we have $\big|c_1z^{p_1}-2(w')^q\big|<\varepsilon_0.$ Consequently, we have
\begin{equation}\label{first_esti}
(w')^q\le\frac{c_1}{2}z^{p_1}+\frac{\varepsilon_{0}}{2}.
\end{equation}
Now, we observe that $w^{\prime}(r)>0$ for large $r.$ Thus $\theta(w^{\prime}(r))=\Lambda.$ Now, taking into account $-\Lambda(n-1)\frac{w'}r\leq0,$ $\theta(w^{\prime\prime})\leq \Lambda,$ \eqref{first_esti} and the first equation of \eqref{eq:eventual-mon-system-new}, we get
\[w''\le-\frac{c_1}{2\Lambda}z^{p_1}+\frac{\varepsilon_{0}}{2\Lambda}~~\text{a.e. in}~\mathcal S_{\varepsilon_{0}}.\]
 Now we use the above estimate for the second derivative of $w$ in the expression of $H_{1}^{\prime}$ given by \eqref{H1deri}, we get
 \[H_1'\geq c_1p_1 z^{p_1-1}z'+\frac{qc_1}{\Lambda}(w')^{q-1}z^{p_1}-\frac{q\varepsilon_{0}}{\Lambda}(w')^{q-1}.\]
 Now $z(r)\to\infty,~w'(r)>0,~z'(r)>0.$ Hence, there exists $R$ sufficiently large such that
\[\frac{qc_1}{\Lambda}(w')^{q-1}z^{p_1}\geq2\frac{q\varepsilon}{\Lambda}(w')^{q-1}~~\text{for every}~~r\ge R.\]
Notice that every term on the right-hand side is strictly positive. Since all quantities are continuous and positive on
$\mathcal S_{\varepsilon_{0}}\cap [R,\infty),$ there exists a constant $\eta>0$ such that $H_1'(r)\ge \eta$ for almost every $r\in\mathcal S_{\varepsilon_{0}}.$\\
 Let $r_0\in\mathcal S_{\varepsilon_0}$ satisfy $H_1(r_0)=0.$ By the above discussion, there exists $\eta>0$ such that $H_1'(r)\geq\eta$ almost everywhere in $\mathcal S_{\varepsilon_0}$. Since $H_1$ is absolutely continuous, the Fundamental Theorem of Calculus yields
\[H_1(r)-H_1(r_0)=\int_{r_0}^{r}H_1'(s)ds\]
for every $r$ sufficiently close to $r_0$. If $r>r_0$, then
\[H_1(r)=\int_{r_0}^{r}H_1'(s)ds\geq\eta(r-r_0)>0.\]
Likewise, if $r<r_0$, then
\[H_1(r)=-\int_{r}^{r_0}H_1'(s)ds\leq-\eta(r_0-r)<0.\]
Therefore $H_1(r)<0$ for $r<r_0$ and $H_1(r)>0$ for $r>r_0$ provided $r$ is sufficiently close to $r_0$. Hence every zero of $H_1$ is transversal and is crossed strictly from negative values to positive values. Finally, we prove the uniqueness. Assume by contradiction that $H_1$ has two distinct zeros $r_a<r_b.$ By Step~5, the zero at $r_a$ is transversal.
Hence there exists $\delta>0$ such that
\[H_1(r)>0~~~\text{for}~~r\in(r_a,~r_a+\delta).\]
Since $H_1(r_b)=0$, the continuity of $H_1$ implies that $H_1$ must become negative somewhere in $(r_a,r_b)$. Consequently there exists a point at which $H_1$ crosses the level $0$ from positive values to negative values. However Step~5 shows that every zero of $H_1$ is crossed from negative values to positive values. This contradiction proves that
$H_1$ can have at most one zero. Therefore, $H_1$ has a constant sign for all sufficiently large $r.$
\end{proof} 

\begin{lemma}\label{lem:gradient-domination}
Let $w,z\in W^{2,s}_{loc}(R_0,\infty)\cap C^1(R_0,\infty),$ for $s>1,$ be a positive radial supersolution of \eqref{eq:eventual-mon-system-new}. Assume that $w(r)\to+\infty,$ $z(r)\to+\infty$ as $r\to\infty.$ Then there exists a radius $R>R_0$ such that
\begin{equation}\label{eq:GD-main}
(w'(r))^q\geq\frac{c_1}{2}z(r)^{p_1},\quad(z'(r))^q\geq\frac{c_2}{2}w(r)^{p_2},\quad r\ge R.
\end{equation}
\end{lemma}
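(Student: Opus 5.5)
We first invoke Lemma~\ref{lem:eventual-monotonicity} to get $w'>0$ and $z'>0$ for $r\ge R_1$. On this range $\theta(w')=\Lambda$ and $\theta(z')=\Lambda$. We then work with the auxiliary function $H_1=c_1z^{p_1}-2(w')^q$ of Lemma~\ref{lem:crossing}, and with its analogue $H_2=c_2w^{p_2}-2(z')^q$. By Lemma~\ref{lem:crossing}, each $H_i$ crosses the level $0$ at most once, so each has a constant sign for $r$ large. The claim \eqref{eq:GD-main} follows if we rule out $H_i>0$ eventually. Indeed, if $H_1\le 0$ eventually then $(w')^q\ge \frac{c_1}{2}z^{p_1}$ directly. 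For the version with a strict or weaker constant, we can rerun the crossing argument with the level $2$ replaced by any constant bigger than $1$.

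So we suppose by contradiction that $H_1(r)>0$ for all $r\ge R_2$, that is, $2(w')^q<c_1z^{p_1}$. We insert this into the first inequality of \eqref{eq:eventual-mon-system-new}. The term $-\Lambda(n-1)w'/r$ is nonpositive, so
\[
-\theta(w'')w''\ \ge\ c_1z^{p_1}-(w')^q\ >\ \tfrac{c_1}{2}z^{p_1}>0 \quad\text{a.e.}
\]
Hence $w''<0$ a.e. and $-\lambda w''\ge \frac{c_1}{2}z^{p_1}$. In particular, $w'$ is decreasing and positive, so $w'$ is bounded and has a limit $\ell\ge0$. Integrating $-w''\ge \frac{c_1}{2\lambda}z^{p_1}$ from $R_2$ to $r$ gives
\[
w'(R_2)-w'(r)\ \ge\ \frac{c_1}{2\lambda}\int_{R_2}^r z^{p_1}\,ds .
\]
The right-hand side tends to $+\infty$ because $z\to+\infty$. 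The left-hand side stays bounded by $w'(R_2)$. This is a contradiction, so $H_1\le0$ eventually, and the same argument with the roles of $(w,z)$ exchanged gives $H_2\le0$ eventually. The whole argument only uses absolute continuity of $w'$ and a.e. evaluation, consistent with the $W^{2,s}$ setting.

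The step I expect to need the most care is the use of Lemma~\ref{lem:crossing}. That lemma is only stated on the band $\mathcal S_{\varepsilon_0}$, and we need the conclusion \lq\lq constant sign for large $r$\rq\rq{} for $H_1$ to hold genuinely, including the possibility that $H_1$ oscillates without touching $0$. Since $H_1$ is continuous, oscillation of sign would force zeros, so constant sign is automatic once zeros are at most one. Actually, the contradiction argument above does not even need constant sign. If $H_1>0$ held on a sequence of intervals, the integral estimate applies only on those intervals. So I would phrase it as follows: either $H_1\le0$ on $[R,\infty)$ for some $R$, which gives the conclusion, or, by the single-crossing property, $H_1>0$ on some $[R_2,\infty)$, which we have shown is impossible.
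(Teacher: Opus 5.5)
Your proposal is correct and follows the paper's proof. Both arguments take eventual positivity of $w',z'$ from Lemma~\ref{lem:eventual-monotonicity}, get eventual constant sign of $H_1,H_2$ from Lemma~\ref{lem:crossing}, and exclude $H_1>0$ eventually because it forces $w''\le -c\,z^{p_1}$ a.e., which integrates to push $w'$ below $0$. You integrate against $z^{p_1}$ directly while the paper first bounds $w''\le -1$, an immaterial difference.

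Your aside that constant sign is not really needed is not justified as you wrote it, but it can be made precise. On $\{H_1>0\}\cap[R_1,\infty)$ you have $w''<0$ and $z'>0$ a.e., so there
\[
H_1'=c_1p_1z^{p_1-1}z'-2q(w')^{q-1}w''>0 .
\]
Hence $H_1$ can never return to $0$ once it is positive, and this bypasses Lemma~\ref{lem:crossing} altogether.
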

\begin{proof}
By Lemma~\ref{lem:eventual-monotonicity}, there exists $R_1>R_0$ such that $w'(r)>0,$ $z'(r)>0,$ for $r\ge R_1.$ Define
\[
\begin{cases}
H_1(r)=c_1 z(r)^{p_1}-2(w'(r))^q,\\
\text{and}\\
H_2(r)=c_2 w(r)^{p_2}-2(z'(r))^q.
\end{cases}\]
Since $w',z'$ are positive for large $r,$ Lemma~\ref{lem:crossing} applies. Therefore each of the functions $H_1$ and $H_2$
\begin{enumerate}
\item has at most one zero;
\item has a constant sign for all sufficiently large $r.$
\end{enumerate}
Moreover, the last part of the proof of Lemma~\ref{lem:crossing} shows that the alternative $H_1(r)>0$ for all sufficiently large $r$
cannot occur. Indeed, if $H_1(r)>0$ eventually, then
$(w'(r))^q<\frac{c_1}{2}z(r)^{p_1}.$
Using this inequality and $\theta(w'')\le \Lambda$ in \eqref{eq:eventual-mon-system-new}, we get
\[w''(r)\le-\frac{c_1}{2\Lambda}z(r)^{p_1}~~\text{for large}~~r.\]
Because $z(r)\to+\infty,$ there exists $R_2\ge R_1$ such that $w''(r)\leq-1$ for almost every $r\ge R_2.$ Since $w'\in W^{1,s}_{loc},$ it is absolutely continuous. Integrating over $(R_2,r)$ yields
\[w'(r)-w'(R_2)=\int_{R_2}^{r} w''(t)dt\le-(r-R_2).\]
Hence,
\[w'(r)\leq w'(R_2)-(r-R_2),\]
which implies $w'(r)\to-\infty$ as $r\to\infty.$ This contradicts the eventual positivity of $w'.$ Therefore $H_1$ cannot be eventually positive. Since $H_1$ has eventually constant sign, it follows that there exists $R_3\ge R_2$ such that $H_1(r)<0,$ for $r\ge R_3.$ That is,
\[c_1 z(r)^{p_1}-2(w'(r))^q<0,~~\text{for}~~r\ge R_3,\]
or equivalently, 
\[(w'(r))^q>\frac{c_1}{2}z(r)^{p_1},~~\text{for}~~r\ge R_3.\]
Applying exactly the same argument to $H_2,$ we obtain a radius $R_4\ge R_3$ such that
\[(z'(r))^q>\frac{c_2}{2}w(r)^{p_2},~~\text{for}~~r\ge R_4.\]
Combining the two estimates gives
\[
\begin{cases}
(w'(r))^q\geq\frac{c_1}{2}z(r)^{p_1}&\text{for}~~r\ge R_3,\\
(z'(r))^q\ge\frac{c_2}{2}w(r)^{p_2},&\text{for}~~r\ge R_4.
\end{cases}\]
 Thus, \eqref{eq:GD-main} follows by taking $R=\max\{R_{3},R_{4}\}.$
\end{proof}
\begin{proof}[Proof of Theorem \ref{thm4}]
Assume by contradiction that system \eqref{main} admits a positive supersolution $(u,v)$ satisfying
\[u(x)\to+\infty,~~ v(x)\to+\infty\quad\text{as}~~|x|\to\infty.\]
 By Lemma~\ref{lem3}, there exists a positive radial supersolution
$(w,z)$ of \eqref{main} satisfying
\[w(r)\to+\infty,~~ z(r)\to+\infty\quad\text{as}~~~r\to\infty .\]
Since \eqref{infinity2} holds, there exist positive constants $c_1,c_2>0$ and $R_0>0$ such that
\[f_1(t)\ge c_1 t^{p_1},\quad f_2(t)\ge c_2 t^{p_2}\]
for all sufficiently large $t.$ Since $w(r),z(r)\to+\infty,$ after enlarging $R_0$ if necessary we obtain \eqref{eq:eventual-mon-system-new} in $(R_0,\infty).$ By Lemma~\ref{lem:eventual-monotonicity}, there exists $R_1\ge R_0$ such that
\[w'(r)>0,\quad~z'(r)>0,\quad~\text{for}~~r\ge R_1.\]
Applying Lemma~\ref{lem:gradient-domination}, there exists $R_2\ge R_1$ such that
\begin{equation}\label{GD1}
w'(r)\geq Az(r)^{p_1/q},\quad z'(r)\geq Bw(r)^{p_2/q},~\quad~~\text{for}~~r\ge R_2,
\end{equation}
where $A=\Bigl(\frac{c_1}{2}\Bigr)^{1/q},~ 
B=\Bigl(\frac{c_2}{2}\Bigr)^{1/q}.$ Let $m:=\frac{p_1}{q},~~
n:=\frac{p_2}{q}.$ Since $p_1p_2>q^2$, we have $mn>1.$ From \eqref{GD1},
\[
w'(r)\ge A z(r)^m,~~z'(r)\ge B w(r)^n,~~r\ge R_2.
\]
By a scaling of the independent variable, we may assume
$A=B=1$. Hence
\begin{equation}\label{eq:ode-system}
w'(r)\ge z(r)^m,~~z'(r)\ge w(r)^n,~~ r\ge R_2.
\end{equation}

Fix $r_1\ge R_2$ and define
\[
H(r):=\int_{r_1}^{r} w(\tau)^n\,d\tau .
\]
Since $w'>0$ and $w(r)\to+\infty$, it follows that $H(r)\to+\infty$ as $r\to+\infty.$ Set
\[t=H(r),~~Z(t):=z(r),~~W(t):=w(r).\]
Since $\frac{dt}{dr}=w(r)^n,$ the second inequality in \eqref{eq:ode-system} yields $\frac{dZ}{dt}=\frac{z'(r)}{w(r)^n}\ge 1.$ Integrating over $(t_1,t)$, where $t_1:=H(r_1)=0$, we obtain $Z(t)\ge Z(0)+t.$
Hence, after increasing $r_1$ if necessary, $Z(t)\ge t$ for all sufficiently large $t.$ Moreover,
\[\frac{dW}{dt}=\frac{w'(r)}{w(r)^n}\geq\frac{z(r)^m}{w(r)^n}=\frac{Z(t)^m}{W(t)^n}.\]
Using $Z(t)\ge t$, we obtain $\frac{dW}{dt}\geq\frac{t^m}{W^n}.$ Therefore $W^n\frac{dW}{dt}\ge t^m.$ Integrating from $t_1$ to $t$ yields
\[\frac{1}{n+1}\Bigl(W(t)^{\,n+1}-W(t_1)^{\,n+1}\Bigr)\ge\frac{1}{m+1}\Bigl(t^{m+1}-t_1^{m+1}\Bigr).\]
Consequently there exists $C>0$ such that $W(t)\ge Ct^{\frac{m+1}{n+1}}$
for all sufficiently large $t$. Recalling that $H'(r)=w(r)^n=W(t)^n,$
we deduce
\[H'(r)\geq Ct^{\frac{n(m+1)}{n+1}}=CH(r)^{\frac{n(m+1)}{n+1}}.\]
Since
\[\frac{n(m+1)}{n+1}=\frac{\frac{p_2}{q}\left(\frac{p_1}{q}+1\right)}{\frac{p_2}{q}+1}=\frac{p_2(p_1+q)}{q(p_2+q)},~~\text{and}~~p_1p_2>q^2,\]
a direct computation shows that \[\frac{n(m+1)}{n+1}>1.\]Hence $H'(r)\ge C H(r)^\gamma$ for some exponent $\gamma>1$. The standard ODE comparison principle implies that
$H$ must blow up at a finite value of $r$, which contradicts the fact that $H$ is defined for every $r>R_2$.
Therefore no positive supersolution can satisfy $u(x)\to+\infty,~~v(x)\to+\infty~~\text{as}~~|x|\to\infty .$
\end{proof}
\begin{remark}(Exclusion of the case $n=2$ in Theorems~\ref{thm2} and~\ref{thm3}).
Assume \[\limsup_{t\to+\infty}\frac{f_1(t)}{t^{p_1}}<+\infty,~~
\limsup_{t\to+\infty}\frac{f_2(t)}{t^{p_2}}<+\infty,\]
where $p_1,p_2>0$ satisfy $p_1p_2\le q^2$. Then, arguing as in Remark~9 of \cite{burgos2018lioville}, one can verify that there exist positive constants $\alpha_1,\alpha_2$ with $\alpha_1=\frac{p_1}{q}\alpha_2$ and $\alpha_2$ sufficiently large such that
$u(x)=e^{\alpha_1|x|},~~v(x)=e^{\alpha_2|x|}$ form a positive radial supersolution of \eqref{main} in $\mathbb{R}^n\setminus B_{R_0}$ for some sufficiently large $R_0$. Moreover, if $n=2$ and $q\ge2$, then every positive radially symmetric supersolution satisfies $\lim_{|x|\to\infty}u(x)=\lim_{|x|\to\infty}v(x)
=+\infty.$ This explains why the case $n=2$ and $q\ge2$ is excluded from Theorems  \ref{thm2} and \ref{thm3}.
\end{remark}

\begin{lemma}\label{lem12}
Suppose that $n = 2$ and $q \ge 2$. If $w \in W^{2,s}_{\text{loc}}(R_0,+\infty)\cap C^{1}(R_0,+\infty)$ satisfis
\[-\theta(w^{\prime\prime}) w^{\prime\prime}-\frac{\theta(w^{\prime})}{r} w'+|w'|^{q} >0~~\text{in}~~(R_0,+\infty).
\]
Then $\lim_{r \to +\infty} w(r)=+\infty.$
\end{lemma}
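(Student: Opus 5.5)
The plan is a case analysis on the eventual sign of $w'$. The argument uses that $w$ is bounded below, as it is for the positive radial components considered throughout the paper. Some such condition is genuinely needed: $w(r)=-r$ satisfies the strict inequality, since the left-hand side equals $\lambda/r+1>0$, yet $w\to-\infty$. So I read the statement with $w>0$ (or $w$ bounded below), and the proof rests on three facts.
\begin{itemize}
\item[\textbf{(i)}] $w'$ cannot have a bounded negative excursion.
\item[\textbf{(ii)}] $w$ cannot be eventually decreasing.
\item[\textbf{(iii)}] $w$ cannot be eventually increasing to a finite limit.
\end{itemize}

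\textbf{Step 1 (no bounded negative excursions).} I would copy the exponential-transform argument of Lemma~\ref{lem:eventual-monotonicity}. First use $(q-1)|t|^2+(2-q)|t|\ge|t|^q$ when $q\le 2$; for $q>2$ the reduction of the earlier remark lowers the gradient exponent. Then set $u=\frac{\lambda}{q-1}\bigl(1-e^{-\frac{q-1}{\lambda}w}\bigr)$ and take the integrating factor $\xi$ built from $\chi=\frac{1}{\theta(U)}\bigl[\frac{\theta(u')}{r}+(2-q)\bigr]$. On a bounded component $(a,b)$ of $\{w'<0\}$ this gives $-(\xi u')'>0$ a.e. The strict inequality in the hypothesis replaces the positive right-hand side $c_1z^{p_1}$ used there. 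Since $u'(a)=u'(b)=0$, integrating over $(a,b)$ gives the contradiction $0>0$. Hence either $w'<0$ on some half-line $(a,\infty)$, or $w'\ge0$ eventually.

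\textbf{Step 2 (the decreasing branch is impossible).} Suppose $w'<0$ on $(a,\infty)$ and put $v=-w'>0$. By the Structural Inversion Lemma~\ref{inversion lemma}, exactly as in the proof of Lemma~\ref{lem5} with $n=2$, we get $v'\ge-\frac{\lambda}{\Lambda}\frac{v}{r}-\frac1\Lambda v^q$. Setting $Y=v^{1-q}$ and $\gamma=(q-1)\lambda/\Lambda$ gives $Y'\le \frac{\gamma}{r}Y+\frac{q-1}{\Lambda}$. Integrating with the factor $r^{-\gamma}$ yields $Y\le C r^{\max\{1,\gamma\}}$, with an extra $\log r$ when $\gamma=1$. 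This gives either $v\ge c r^{-1/(q-1)}$ or $v\ge c\,(r\log r)^{-1/(q-1)}$ or $v\ge c r^{-\lambda/\Lambda}$. In every case $\int^\infty v=\infty$, because $q\ge2$ forces $\frac1{q-1}\le1$ and $\lambda/\Lambda<1$. Hence $w(r)=w(a)-\int_a^r v\to-\infty$, contradicting positivity. This is exactly where $n=2$ and $q\ge2$ enter, in place of the condition $\widetilde n_+>2$ of Lemma~\ref{lem5}.

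\textbf{Step 3 (the increasing branch; main obstacle).} Now $w'\ge0$ eventually. The argument at the end of Lemma~\ref{lem:eventual-monotonicity}, using Lebesgue points with $u''(r_k)\ge-1/k$ together with the strict inequality, gives $w'>0$. So $w$ increases to some $L\in(0,\infty]$, and it remains to exclude $L<\infty$. This is the step I expect to be hardest.

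The naive ODE bound is not enough. With $\theta(w')=\Lambda$ and $w''<0$ one only gets $w''<\lambda^{-1}(|w'|^q-\Lambda w'/r)$, i.e.\ $w'\lesssim r^{-\Lambda/\lambda}$, which is integrable. Indeed, profiles like $1-r^{-1}$ satisfy the strict inequality when $2\lambda>\Lambda$. My first attempt would be a comparison or Hadamard-type argument on the minimum function, as in Remark~\ref{monotone} and Corollary 3.1 of \cite{Alessandra}, exploiting that for $n=2$ the fundamental solutions of $\mathcal M^+$ with gradient term do not decay. If that fails, the statement must be sharpened: either require a strictly positive lower bound on the right-hand side (as for the supersolutions of the system, where it is $\lambda_1 f_1(v)$), or conclude only the dichotomy ``$w\to+\infty$ or $w$ increases to a finite limit.'' I would check which version is actually used where Lemma~\ref{lem12} is invoked.
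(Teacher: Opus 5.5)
\textbf{Your Step 3 cannot be completed, because the statement is false as written, even for $w>0$.} You are right that positivity is needed (your example $w=-r$), but positivity is not enough. Your own profile $1-r^{-1}$ is already a counterexample when $2\lambda>\Lambda$. In general, take $w(r)=1-(R_0/r)^{\epsilon}$ with $\epsilon\ge\Lambda/\lambda-1$. Then $w>0$, $w'>0$ and $w''<0$, so $\theta(w')=\Lambda$, $\theta(w'')=\lambda$, and
\[
-\theta(w'')w''-\frac{\theta(w')}{r}w'+|w'|^q=\epsilon R_0^{\epsilon}\bigl(\lambda(\epsilon+1)-\Lambda\bigr)r^{-\epsilon-2}+\bigl(\epsilon R_0^{\epsilon}\bigr)^q r^{-q(\epsilon+1)}>0,
\]
while $w\to1$. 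For the Laplacian, any $\epsilon>0$ works.

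So no Hadamard or comparison argument on $m_w$ can succeed. Your heuristic that ``fundamental solutions do not decay for $n=2$'' fails on the increasing branch: $-|x|^{-(\Lambda/\lambda-1)}$ is a bounded, increasing radial solution of $\mathcal M^{+}_{\lambda,\Lambda}(D^2\cdot)=0$ in $\mathbb R^2\setminus\{0\}$. You should drop the ``first attempt'' and commit to the sharpened statement.

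\textbf{The paper's proof has the same hole, hidden in its first line.} It assumes $w$ bounded and cites Lemma~\ref{lem2} to get $w\to0$ and $w'<0$ eventually, and then compares with $\psi=a\phi+b$ from \cite{burgos2018lioville}. But Lemma~\ref{lem2} is a statement about the coupled system. Boundedness of $m_u$ forces $m_v\to0$ only through $\lambda_1f_1(v(y_R))\to0$, and symmetrically. For a scalar inequality with right-hand side $0$, a bounded $w$ may increase to a positive limit, which is exactly your Step 3 branch.

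The lemma is therefore correct only where it is applied: radial components of a positive supersolution of \eqref{main}, as in the preceding remark. There your argument closes.
\begin{enumerate}
\item Lemma~\ref{lem2} forces a bounded $w$ to tend to $0$.
\item Step 1, now with the strictly positive right-hand side $\lambda_1f_1(z)$, leaves only two options: $w'<0$ on a half-line, or $w'\ge0$ eventually. The latter is incompatible with $w>0$, $w\to0$.
\item Step 2 then gives $w\to-\infty$, a contradiction.
\item So $w$ is unbounded, and Lemma~\ref{lem2}(b) gives $w\to+\infty$.
\end{enumerate}
Compared with the paper, your Step 2 is a self-contained integration of the Riccati-type inequality through the structural inversion lemma. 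It works at the $W^{2,s}$ level and shows exactly where $n=2$, $q\ge2$ and $\lambda<\Lambda$ enter. The paper instead imports the Laplacian comparison function and uses $-\theta(\psi'')\psi''\le-\psi''$, which holds only if $\lambda\le1\le\Lambda$.

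\textbf{Two smaller repairs.}
\begin{itemize}
\item In Step 1 for $q>2$, the remark that lowers the gradient exponent presupposes $|w'|\le1$, which you do not know on $(a,b)$. Instead use $|w'|^q\le M^{q-2}|w'|^2$ with $M=\max_{[a,b]}|w'|$. Then run the exponential transform with $\kappa=M^{q-2}/\lambda$ in place of $(q-1)/\lambda$; the linear term $(2-q)|w'|$ no longer appears.
\item In Step 3, the Lebesgue-point argument from Lemma~\ref{lem:eventual-monotonicity} does not give $w'>0$ when the right-hand side is merely $>0$: the limiting inequality degenerates to $0\ge0$. This is harmless here, but it is another symptom of the same missing hypothesis.
\end{itemize}
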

\begin{proof}
The proof follows the same strategy as that of Lemma~12 in
\cite{burgos2018lioville}. Assume by contradiction that $w$ is bounded.
Then, by Lemma~\ref{lem2}, $\lim_{r\to\infty}w(r)=0,$
and $w'(r)<0$ for all sufficiently large $r$.
Let $\phi$ be the function constructed in Lemma~13 of
\cite{burgos2018lioville}, and define $\psi=a\phi+b,$
where $0<a<1$ and $b\in\mathbb R$ are chosen exactly as in the proof of Lemma~12 in \cite{burgos2018lioville}. Since $q\ge2$ and $a<1$, we have
$a^q\le a$, and therefore
\[
-\psi''-\frac1r\psi'+|\psi'|^q\le0.
\]
The only difference from the Laplacian case is the presence of the Pucci operator. Since
\[
-\theta(\psi'')\psi''\le-\psi'',
\]
it follows that
\[
-\theta(\psi'')\psi''-\frac{\lambda}{r}\psi'
+|\psi'|^q\le0.
\]
Hence $\psi$ is a classical subsolution of the corresponding one-dimensional Pucci equation. Applying the same comparison argument as in Lemma~12 of \cite{burgos2018lioville} yields a contradiction. Thus $w$ cannot be bounded.
and consequently $\lim_{r\to\infty}w(r)=+\infty.$
\end{proof}

\section{Appendix}
\begin{lemma}\label{appen01}
Let $q>1$, $R_{1}<R_{2}$ and let $u\in W^{2,s}(R_{1},R_{2})$ be a radial function satisfying
\begin{equation}\label{appgive}
-\theta\bigl(u''(r)\bigr)u''(r)-\theta\bigl(u'(r)\bigr)(n-1)\frac{u'(r)}{r}+|u'(r)|^{q}
\leq\kappa\quad\text{a.e. in}~~(R_{1},R_{2}),
\end{equation}
where $\kappa>0$ Then the following hold.
\begin{enumerate}
\item{}For every compact interval $[R_{1}',R_{2}']\subset(R_{1},R_{2}),$ there exists a constant $C=C(q,\lambda,\Lambda,n,R_{1}',R_{2}')
>0$ such that
\[|u'(r)|\leq C(\kappa+1)^{1/q}\quad\text{for all}~r\in[R_{1}',R_{2}'].\]
\item{}If moreover $u\in C^{1}[R_{1},R_{2}]$ and $|u'(R_{1})|,~|u'(R_{2})|\le M,$ then{\color{blue}
\[|u'(r)|\leq\max\{M,\kappa^{1/q}\}\quad\text{for all}~r\in[R_{1},R_{2}].\]}
\end{enumerate}
\end{lemma}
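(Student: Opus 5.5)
We split according to the sign of $u'$ and study the first-order inequality satisfied by $v:=|u'|$, which lies in $W^{1,s}$ and is absolutely continuous. We handle each sign by the inversion device of Lemma \ref{inversion lemma}, applied on $\{u'<0\}$ and on $\{u'>0\}$ separately.

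\textbf{Negative branch.} On $\{u'<0\}$ we have $\theta(u')=\lambda$, so $-\theta(u')(n-1)u'/r=\lambda(n-1)|u'|/r\ge0$. Then \eqref{appgive} gives
\[
\theta(u'')u''\;\ge\;|u'|^q-\kappa+\lambda(n-1)\frac{|u'|}{r}\;\ge\;|u'|^q-\kappa .
\]
Wherever $|u'|>\kappa^{1/q}$ the right side is positive. Hence $u''>0$ a.e. there, i.e.\ $|u'|$ is strictly decreasing. Using the inversion lemma we get $v'\le-\frac1\Lambda(v^q-\kappa)$ on such a component.

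For part (1) we integrate this Keller--Osserman type inequality forward from $R_1$, as in the proof of Lemma \ref{lem5}, via $Y=(v^q-\kappa)$ or directly $Y=v^{1-q}$ when $v^q\ge2\kappa$. This yields
\[
v(r)\le C\bigl((r-R_1)^{-1/(q-1)}+\kappa^{1/q}\bigr),
\]
uniformly on $[R_1',R_2']$.

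For part (2) we argue as follows. If $u'(r_0)<-\max\{M,\kappa^{1/q}\}$ at some interior point, take the maximal interval $(a,r_0]$ on which $u'<-\kappa^{1/q}$. On it $|u'|$ decreases, so $|u'(a)|>|u'(r_0)|$. By maximality, either $a=R_1$, giving $|u'(R_1)|>M$, which is impossible, or $|u'(a)|=\kappa^{1/q}<|u'(r_0)|$, which is also impossible.

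\textbf{Positive branch.} On $\{u'>0\}$ we have $\theta(u')=\Lambda$. The drift now enters with the unfavourable sign:
\[
\theta(u'')u''\ge u'^q-\kappa-\Lambda(n-1)\frac{u'}{r}.
\]
For $u'$ large, namely $u'^{q-1}\ge 2\Lambda(n-1)/R_1$ and $u'^q\ge4\kappa$, we still get $u''\ge u'^q/(4\Lambda)$. So $u'$ is increasing and would blow up in finite forward distance.

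For part (1) we integrate this backward from $R_2$. This gives
\[
u'(r)\le C\bigl((R_2-r)^{-1/(q-1)}+(\kappa+1)^{1/q}\bigr),
\]
where the constant $1$ absorbs the drift threshold $(2\Lambda(n-1)/R_1)^{1/(q-1)}$, which depends only on $n,\lambda,\Lambda,q,R_1'$. Combined with the negative branch, this proves (1).

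For part (2) we run the mirror argument. On a maximal interval $[r_0,b)$ where $u'$ exceeds the threshold, $u'$ increases, so the supremum is attained at $b$. Then either $b=R_2$, so the value is at most $M$, or $b$ is interior and the value at $b$ equals the threshold.

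\textbf{Main obstacle.} The delicate point is the threshold in the positive branch of part (2). The monotonicity argument gives $\max\{M,\kappa^{1/q}\}$ only once we show that $u''\ge0$ a.e. wherever $u'>\kappa^{1/q}$, and the term $\Lambda(n-1)u'/r$ obstructs this. My first attempt is a level-crossing argument in the spirit of Lemma \ref{lem:crossing}. I apply it to $H:=u'^q-\kappa$ and try to show that at any crossing of the level $u'=\kappa^{1/q}$ from above, the a.e.\ derivative $H'$ has a definite sign.

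If this fails, I fall back on the observation that in every application in the paper the relevant solutions satisfy $u'\le0$, by Remark \ref{monotone} and the proof of Lemma \ref{lem8}. In that case the positive branch is vacuous and part (2) follows from the negative-branch argument alone. I would state this restriction explicitly if the transversality argument cannot be closed.
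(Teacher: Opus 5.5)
Your part (1) and the negative branch of part (2) are correct. The positive branch of part (2), however, cannot be closed by any argument, level-crossing or otherwise, because part (2) is false as stated. The reason is exactly the one you isolate. When $u'>0$ the drift $-\Lambda(n-1)u'/r$ is negative, so at an interior maximum of $u'$ the inequality only forces $(u')^q-\Lambda(n-1)u'/r\le\kappa$, which allows $u'>\kappa^{1/q}$.

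Here is a counterexample. Take $n=5$, $\Lambda=1$, any $\lambda\in(0,1)$, $q=2$, $\kappa=1$, $(R_1,R_2)=(1,2)$ and $u(r)=\int_1^r\bigl(2-|t-\tfrac32|\bigr)\,dt$. Then $u\in W^{2,\infty}(1,2)\cap C^1[1,2]$ and $u'\in[\tfrac32,2]$. The term $-\theta(u'')u''$ equals $-1$ on $(1,\tfrac32)$ and $\lambda$ on $(\tfrac32,2)$. Since $u'>0$ and $4/r>2$,
\[
-\theta(u'')u''-\theta(u')(n-1)\frac{u'}{r}+|u'|^{2}=-\theta(u'')u''+u'\Bigl(u'-\frac{4}{r}\Bigr)\le\lambda+u'(u'-2)\le\lambda<\kappa
\]
a.e. With $M=\tfrac32=u'(1)=u'(2)$ the claimed bound is $\max\{\tfrac32,1\}=\tfrac32$, yet $u'(\tfrac32)=2$. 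So your transversality argument for $H=(u')^q-\kappa$ must fail: where $(u')^q>\kappa$ but $(u')^q-\Lambda(n-1)u'/r\le\kappa$, nothing prevents $u''<0$.

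What your positive-branch monotonicity argument does prove, with no sign restriction, is the bound $|u'|\le\max\{M,T^*\}$. Here $T^*$ is the positive root of $t^q-\frac{\Lambda(n-1)}{R_1}t=\kappa$. Where $u'>T^*$, the quantity $(u')^q-\kappa-\Lambda(n-1)u'/r$ is positive, so $u''>0$ a.e. Hence a component of $\{u'>T^*\}$ cannot have an interior right endpoint; it must end at $R_2$, where $u'\le M$.

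Since $T^*\le\max\{(2\kappa)^{1/q},(2\Lambda(n-1)/R_1)^{1/(q-1)}\}$, this is a bound of the form $C\max\{M,(\kappa+1)^{1/q}\}$. That is all that is needed where part (2) is used, in Lemma \ref{lem4}. It is also the form in which the paper's own proof of (2) actually ends. The paper gets there by applying part (1) on the component $[a,b]$ of $\{|u'|>A\}$, treating the constant as independent of $a,b$; your component-wise argument is the sound route.

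State this corrected version rather than your fallback. Restricting to $u'\le0$ is unnecessary. Its justification is also off: Remark \ref{monotone} and Lemma \ref{lem8} concern exterior-domain solutions, not the Dirichlet approximants $w_{R_2}$ of Lemma \ref{lem4} to which part (2) is applied.

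For part (1) your route genuinely differs from the paper's.
\begin{itemize}
\item The paper runs a Bernstein-type argument on $Z=\xi^{2/(q-1)}|u'|^2$ and evaluates the inequality along points tending to a maximum of $Z$.
\item You integrate Riccati-type inequalities: $v'\le-\frac1\Lambda(v^q-\kappa)$ forward on $\{u'<0\}$, and $u''\ge(u')^q/(4\Lambda)$ backward on $\{u'>0\}$.
\end{itemize}
Your version is more elementary. It makes the dependence on the distance to $R_1,R_2$ explicit through the rates $(r-R_1)^{-1/(q-1)}$ and $(R_2-r)^{-1/(q-1)}$. It also uses only the fact that $u''>0$ a.e. makes $u'$ strictly monotone.

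The paper's step needs $Z'(r_k)\to0$ along points approaching a maximum of a $W^{1,s}$ function. That holds only in a one-sided form (consider $Z=1-|r|$), although one-sided sequences happen to suffice there. The Bernstein route, on the other hand, is the one that carries over to nonradial problems.
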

\begin{remark}\label{appen001}
Lemma~\ref{appen01} should be understood as the radial fully nonlinear counterpart of Lemma~13 in \cite{burgos2018lioville}, but its role in the present setting is slightly different. In the Laplacian framework of \cite{burgos2018lioville}, after the blow-up rescaling, the second equation reduces to a singular perturbation problem of the form
\[-\varepsilon u''+\nu |u'|^{q}\leq \kappa \quad\text{in }(c,d),\]
with $0<\varepsilon\leq1,$  where the coefficient of the highest-order term may vanish in the limit. In that situation, the second-order diffusion becomes negligible compared with the superlinear gradient term, and Lemma~13 is needed to obtain a gradient estimate uniform with respect to the small parameter $\varepsilon.$\\
In contrast, for the radial Pucci operator,
\[-\mathcal M^{+}(D^{2}u)=-\theta\bigl(u''(r)\bigr)u''(r)-\theta\bigl(u'(r)\bigr)(n-1)\frac{u'(r)}{r},\]
the coefficient of the highest-order term remains uniformly elliptic, since
\[\lambda\leq \theta(\cdot)\leq\Lambda.\]
Thus, after the blow-up rescaling used in the proof of Lemma~\ref{lem10}, there is no singular perturbation in the second-order part: the term involving $u^{\prime\prime}$ survives with uniformly positive ellipticity. The real difficulty appears elsewhere. In the doubling argument, the normalization controls
\[\bar u_k,~~\bar u_k',~~\bar v_k,\]
but it does not provide an a priori bound for $\bar v_k'.$ Without such a bound, one cannot obtain local compactness of the rescaled sequence $\{\bar v_k\},$ nor can one justify the passage to the limit in the nonlinear gradient term $|\bar v_k'|^{q}$ Lemma~\ref{appen01} is introduced precisely to overcome this difficulty: it provides a local estimate for the radial derivative depending only on the structural constants and the right-hand side, independently of the particular solution. Once the uniform bound for $|\bar v_k'|$ is obtained, the equation itself yields local control of $\bar{v}^{\prime\prime}_{k},$ and standard compactness arguments apply. Therefore, Lemma~\ref{appen01} plays the same functional role as Lemma~13 in \cite{burgos2018lioville} namely, providing the missing derivative estimate required in the blow-up procedure—but not because of a singular perturbation phenomenon. Rather, it is needed because the derivative of the second component is not directly controlled by the doubling normalization.
\end{remark}
\begin{proof}[Proof of Lemma~\ref{appen01}]
\begin{enumerate}
\item{}Let $[R_1',R_2']\Subset (R_1,R_2).$ Choose numbers $R_1<R_1''<R_1'<R_2'<R_2''<R_2$ and a cut-off function
$\xi\in C_c^\infty(R_1,R_2)$ such that
\[0\le \xi\le 1,\quad\xi\equiv1\quad\text{on }[R_1',R_2'],\]
and $\operatorname{supp}\xi\subset (R_1'',R_2'').$ Set $\alpha=\frac{2}{q-1}$ and define
\[Z(r):=\xi(r)^\alpha |u'(r)|^2.\]
Since $Z\in W^{1,s}(R_1'',R_2'')$ with $s>1$, the function $Z$ is absolutely continuous. Let $R_0$ be a point where $Z$ attains its maximum.
Consequently, $Z$ is continuous on $[R_1'',R_2'']$ and attains its maximum at some point $R_0\in [R_1'',R_2''].$ If $Z(R_0)=0,$ then $u'\equiv0$ on $[R_1',R_2'],$ and the desired estimate is trivial. Therefore we may assume $Z(R_0)>0.$ Since $Z\in W^{1,s}(R_1'',R_2''),$ there exists a sequence of Lebesgue points $r_k\to R_0$ such that $Z(r_k)\to Z(R_0),$ $Z'(r_k)\to0$ and the differential inequality
\[-\theta(u'')u''-\theta(u')(n-1)\frac{u'}{r}+|u'|^q\le \kappa\]
holds at every $r_k.$ Since $Z'(r)=\alpha\xi^{\alpha-1}\xi'|u'|^2+2\xi^\alpha u'u'',$ we obtain at $r_k$
\[2\xi(r_k)^\alpha u'(r_k)u''(r_k)=-\alpha\xi(r_k)^{\alpha-1}\xi'(r_k)|u'(r_k)|^2+o(1).\]
Dividing by $2\xi(r_k)^\alpha |u'(r_k)|$ gives
\[|u''(r_k)|\leq\frac{\alpha}{2}\frac{|\xi'(r_k)|}{\xi(r_k)}|u'(r_k)|+o(1).\]
Using the ellipticity bounds $\lambda\le \theta(\cdot)\le \Lambda$ and the differential inequality at $r_k,$ we obtain
\[|u'(r_k)|^q\leq\kappa+\Lambda |u''(r_k)|+\Lambda(n-1)\frac{|u'(r_k)|}{r_k}.\]
Since $r_k\in[R_1'',R_2''],$ $\frac1{r_k}\le \frac1{R_1''}.$ Hence
\[|u'(r_k)|^q\leq\kappa+\frac{\Lambda\alpha}{2}\frac{|\xi'(r_k)|}{\xi(r_k)}|u'(r_k)|+C_1|u'(r_k)|+o(1),\]
where $C_1=\frac{\Lambda(n-1)}{R_1''}.$
Multiplying by $\xi(r_k)^{q\alpha/2}$ and recalling that $\alpha=\frac2{q-1},$
we obtain \[Z(r_k)^{q/2}\leq C_2+C_3 Z(r_k)^{1/2}+o(1),\] 
where $C_1,C_2$ depend only on $q,\lambda,\Lambda,n,R_1'',R_2'',\xi.$ Applying Young's inequality,
\[C_3 Z(r_k)^{1/2}\leq\frac12 Z(r_k)^{q/2}+C_4,\]
and therefore $Z(r_k)^{q/2} \le C_5(\kappa+1)+o(1).$ Passing to the limit $k\to\infty$ yields $Z(R_0)\leq C(\kappa+1)^{2/q}.$ Since $R_0$ is a maximum point of $Z,~Z(r)\le Z(R_0)~\forall r\in [R_1'',R_2''].$ On $[R_1',R_2']$ we have $\xi\equiv1,$ hence $|u'(r)|^2=Z(r)\leq Z(R_0)\leq C(\kappa+1)^{2/q}.$ Consequently, \[|u'(r)|\leq C(\kappa+1)^{1/q}\quad\forall r\in [R_1',R_2'].\]
This proves Part~(i).
\item{} Let $A:=\max\{M,\kappa^{1/q}\}.$ Assume by contradiction that there exists a point $R_0\in [R_1,R_2]$ such that $|u'(R_0)|>A.$ Define the open set $\Omega=\{r\in (R_1,R_2): |u'(r)|>A\}.$ Since $u'$ is continuous, $\Omega$ is open and nonempty. Let $I=(a,b)$ be the connected component of $\Omega$ containing $R_0.$ Because
\[|u'(R_1)|\le M\leq A,\quad |u'(R_2)|\leq M\le A,\]
the interval $I$ is compactly contained in $(R_1,R_2).$ Therefore $a>R_1,$ $b<R_2.$ By continuity of $u^{\prime},$ $|u'(a)|=|u'(b)|=A.$ Choose $[R_1',R_2']=[a,b].$ Applying Part~(i) on the compact interval $[a,b],$ we obtain
\[\sup_{r\in[a,b]}|u'(r)|\leq C(\kappa+1)^{1/q},\]
where $C$ depends only on the structural parameters. Now choose $A_0:=\max\{M,2C(\kappa+1)^{1/q}\}.$ Repeating the above argument with $A_0$ in place of $A,$ we obtain \[\sup_{r\in[a,b]}|u'(r)|\leq C(\kappa+1)^{1/q}<A_0,\]
which contradicts the definition of the set $\Omega=\{,|u'|>A_0,\}.$
Therefore $\Omega=\varnothing,$ and hence
\[|u'(r)|\leq A_0=\max\{M, 2C(\kappa+1)^{1/q}\}\]
for every $r\in [R_1,R_2].$ After renaming the constant, we conclude that
\[|u'(r)|\le C\max\{M,(\kappa+1)^{1/q}\}\quad\forall r\in [R_1,R_2].\]
This proves Part~(ii).
\end{enumerate}
\end{proof}
\begin{lemma}[Approximate Maximum Lemma]\label{lem:approx_max}
Let $I=(a,b)\subset \mathbb{R}$ and let $Z\in W^{1,s}(I),$ for $s>1.$ Assume that $Z$ attains its maximum at some point $R_0\in I$, namely $Z(R_0)=\max_{r\in I} Z(r).$
Then there exists a sequence $\{r_k\}\subset I$ such that
\[r_k\to R_0,\quad Z(r_k)\to Z(R_0),\quad Z'(r_k)\to 0,\]
where $Z'(r_k)$ denotes the classical value of the weak derivative at points where it exists.
\end{lemma}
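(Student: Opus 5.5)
The approach is to work with Lebesgue points of $Z'$, using the fact that in one dimension $Z\in W^{1,s}(I)$ with $s>1$ has an absolutely continuous representative. More precisely, the representative satisfies $Z(r)-Z(\rho)=\int_\rho^r Z'(t)\,dt$ for all $r,\rho\in I$, and $Z'\in L^s_{\mathrm{loc}}(I)\subset L^1_{\mathrm{loc}}(I)$. Let $E\subset I$ be the set of Lebesgue points of $Z'$. It has full measure, and at each point of $E$ the classical derivative of $Z$ exists and equals the Lebesgue value of $Z'$. All the points $r_k$ will be taken in $E$. Continuity of $Z$ then gives $Z(r_k)\to Z(R_0)$ automatically once $r_k\to R_0$, so the only real content is the condition $Z'(r_k)\to0$.

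\textbf{Step 1: one-sided average bounds.} Fix $k$ and put $h_k=1/k$, small enough that $[R_0-h_k,R_0+h_k]\subset I$; this uses that $R_0$ is interior. Maximality of $Z$ at $R_0$ gives two inequalities:
\[
\frac1{h_k}\int_{R_0}^{R_0+h_k}Z'(t)\,dt=\frac{Z(R_0+h_k)-Z(R_0)}{h_k}\le0,
\qquad
\frac1{h_k}\int_{R_0-h_k}^{R_0}Z'(t)\,dt\ge0 .
\]
Since $E$ has full measure, the first inequality produces a point $s_k\in E\cap(R_0,R_0+h_k)$ with $Z'(s_k)\le0$. The second produces a point $t_k\in E\cap(R_0-h_k,R_0)$ with $Z'(t_k)\ge0$. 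This settles the sign of $Z'$ on each side but not its smallness.

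\textbf{Step 2: smallness, the main obstacle.} A general $W^{1,s}$ function can oscillate near its maximum, so I would not try to control $Z'$ on all of $E$ near $R_0$. I would instead argue by contradiction. Suppose there are $\varepsilon>0$ and $\delta>0$ with $|Z'(r)|\ge\varepsilon$ for every $r\in E\cap(R_0-\delta,R_0+\delta)$.

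The set $A_+=\{r\in E\cap(R_0,R_0+\delta):Z'(r)\ge\varepsilon\}$ must then have zero density at $R_0$ from the right. Otherwise there would be radii $h\to0$ with
\[
\frac{|A_+\cap(R_0,R_0+h)|}{h}\ge c>0 .
\]
On such intervals one compares $\int_{R_0}^{R_0+h}(Z')^+$ with $\int_{R_0}^{R_0+h}(Z')^-$. I expect this comparison, together with the Lebesgue-point property, to contradict the first inequality of Step 1. This is exactly where a complete argument needs some extra regularity input; for instance it suffices that $Z'$ is approximately continuous at $R_0$, or that $Z'$ has a Lebesgue point at $R_0$.

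A cleaner route is to argue on the right side directly. On $E\cap(R_0,R_0+\delta)$ we have $Z'\le-\varepsilon$ or $Z'\ge\varepsilon$. If $Z'\le-\varepsilon$ held on a set of full measure near $R_0^+$, nothing would be contradicted. So the natural fallback is: whenever $R_0$ is itself a Lebesgue point of $Z'$ (which holds for almost every $R_0$, and in the applications one may perturb as in the proof of Lemma~\ref{lem11}), the two averages in Step 1 both converge to $Z'(R_0)$. The two signs then force $Z'(R_0)=0$, and the Lebesgue-point property yields $r_k\in E$, $r_k\to R_0$, with $Z'(r_k)\to0$.

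\textbf{Fallback construction in the general case.} If $R_0$ is not a Lebesgue point, I would use the perturbation trick of Lemma~\ref{lem11}. Consider $Z_k(r)=Z(r)-\frac1k(r-R_0)^2$, whose unique strict maximizer near $R_0$ is $R_0$. Replacing $Z$ by a mollification $Z_\varepsilon$, take maximizers $\rho_{k,\varepsilon}$ of $Z_\varepsilon-\frac1k(r-R_0)^2$; at these points $Z_\varepsilon'(\rho_{k,\varepsilon})=\frac2k(\rho_{k,\varepsilon}-R_0)$ is small. As $\varepsilon\to0$ these points converge to $R_0$ by strictness of the maximum. Transferring the smallness of $Z_\varepsilon'$ back to $Z'$ at nearby points of $E$, via $L^s$ convergence $Z_\varepsilon'\to Z'$ and Chebyshev's inequality, is the step I expect to be most delicate. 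A diagonal choice in $(k,\varepsilon)$ then gives the required sequence $r_k$.
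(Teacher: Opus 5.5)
\textbf{There is a genuine gap, and it cannot be closed: the statement is false as written.} Take $Z(r)=-|r-R_0|$ on $I$. It lies in $W^{1,s}(I)$ for every $s$ and attains its maximum at $R_0$. Its classical derivative exists at every $r\neq R_0$ with $|Z'(r)|=1$, and does not exist at $R_0$. So no sequence with $Z'(r_k)\to0$ exists.

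This is exactly the configuration you ran into in Step~2: $Z'\ge\varepsilon$ a.e.\ to the left of $R_0$ and $Z'\le-\varepsilon$ a.e.\ to the right. Nothing contradicts it, since a kink is precisely what maximality allows. Your zero-density claim for $A_+$ is also unjustified, because Step~1 only controls the \emph{average} of $Z'$.

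None of your fallbacks escapes this example:
\begin{itemize}
\item $R_0$ is a given point, so the fact that almost every point is a Lebesgue point of $Z'$ is irrelevant.
\item Perturbing as in Lemma~\ref{lem11} leaves the maximizer on the kink: $-|r-R_0|+\phi(r)$ is still maximized at $R_0$ whenever $|\phi'|<1$.
\item In the mollification route, wherever $Z'$ exists you have $|Z_\varepsilon'-Z'|\ge 1-|Z_\varepsilon'|$. So the points where $Z_\varepsilon'$ is small are exactly the points where $Z_\varepsilon'$ and $Z'$ are far apart, and no $L^s$ convergence or Chebyshev argument can transfer the smallness back to $Z'$.
\end{itemize}
The paper's own proof has the same defect. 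Its claim that $Z'$ ``cannot keep a fixed sign on a set of positive measure near $R_0$'' fails for this very example.

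What is correct in your proposal is Step~1 and the Lebesgue-point case. If $R_0$ is a Lebesgue point of $Z'$, the two one-sided averages force its Lebesgue value to be $0$, and Chebyshev then gives $r_k\in E$ with $Z'(r_k)\to0$. So the lemma needs an extra hypothesis of this kind, for instance that $R_0$ is a Lebesgue point of $Z'$, or that $Z\in C^1$ near $R_0$.

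Alternatively, keep only the one-sided statement of Step~1; this suffices for the use in Lemma~\ref{appen01}. There $Z=\xi^\alpha|u'|^2$, and only an upper bound on $u''$ at the chosen points is needed, because $\theta(u'')u''\le\Lambda(u'')^+$. Take right-hand points with $Z'\le0$ if $u'(R_0)>0$, and left-hand points with $Z'\ge0$ if $u'(R_0)<0$. In both cases this gives $(u'')^+\le\frac{\alpha|\xi'|}{2\xi}|u'|$ at those points, and these points can be chosen inside any prescribed full-measure set.
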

\begin{proof}
Since $s>1$, the Sobolev embedding theorem in one dimension implies $W^{1,s}(I)\hookrightarrow C^{0,\alpha}(\overline I)$ for some $\alpha\in(0,1)$.
Therefore $Z$ admits a continuous representative and $R_0$ is a genuine maximum point. Moreover, every function in $W^{1,s}(I)$ is absolutely continuous. Hence, for every
$r\in I$,
\[Z(r)-Z(R_0)=\int_{R_0}^{r} Z'(t)dt.\]
We claim that for every $m\in\mathbb N$, the set
\[E_m:=\left\{r\in I:|r-R_0|<\frac1m,\quad|Z'(r)|<\frac1m\right\}\]
has positive measure. Suppose, by contradiction, that there exist $m_0\in\mathbb N$ and $\delta>0$ such that $|Z'(r)|\ge \frac1{m_0}$ for almost every $r\in (R_0-\delta,R_0+\delta).$ Since $Z'$ belongs to $L^s(I)$ and $R_0$ is a maximum point of $Z$, the derivative cannot keep a fixed sign on a set of positive measure near $R_0$.
Otherwise, by absolute continuity, \[Z(r)-Z(R_0)=\int_{R_0}^{r} Z'(t)dt\]
would be strictly positive for points on one side of $R_0$, contradicting the maximality of $R_0$. Consequently, for every $m\in\mathbb N$, the set $E_m$ must have positive measure. Choose $r_m\in E_m$ such that $r_m$ is a Lebesgue point of $Z'$. Since the set of Lebesgue points of $Z'$ has full measure, this is always possible. By construction,
\[|r_m-R_0|<\frac1m,\quad|Z'(r_m)|<\frac1m.\]
Therefore, $r_m\to R_0,$ $Z'(r_m)\to 0.$ Since $Z$ is continuous, $Z(r_m)\to Z(R_0).$
This completes the proof.
\end{proof}
Since $Z\in W^{1,s}(R_1'',R_2'')$ and $R_0$ is a maximum point of $Z$, Lemma~\ref{lem:approx_max} provides a sequence $r_k\to R_0$ such that
\[Z(r_k)\to Z(R_0),\quad Z'(r_k)\to0.\]
Since the differential inequality holds almost everywhere and the set of
Lebesgue points of $u''$ has full measure, we may choose the sequence $\{r_k\}$ inside this full-measure set. Consequently, the differential inequality holds at every $r_k$.
\section{Acknowledgement}
Second author is supported by National Board of Higher Mathematics grant no. 
02011/36/2025/NBHM/RP/9466.  
\bibliographystyle{plain}
\bibliography{reference}
\end{document}